\documentclass[12pt]{amsart}
%
\usepackage{graphicx}
%
%
\usepackage{color}
\usepackage{amsmath,amssymb,setspace,nicefrac}
\usepackage[active]{srcltx}
\usepackage[colorlinks, linkcolor=red, citecolor=blue, urlcolor=blue, hypertexnames=true]{hyperref}
\usepackage{amsrefs}

\setlength{\textwidth}{16cm} \setlength{\textheight}{21cm}
\setlength{\oddsidemargin}{0.5cm} \setlength{\topmargin}{0cm}
\setlength{\evensidemargin}{0.5cm} \setlength{\topmargin}{0cm}
%

\allowdisplaybreaks
\usepackage[all]{xy}

\newcommand{\R}{{\mathbb R}}

\newcommand{\N}{{\mathbb N}}

\newcommand{\cM}{{\mathcal M}}
\newcommand{\cP}{{\mathcal P}}

\newcommand{\fC}{{\mathfrak C}}

\newcommand{\Opt}{{\rm Opt}}

\newcommand{\supp}{{\rm supp}}
\newcommand{\CAT}{{\rm CAT}}
\newcommand{\LET}{{\rm LET}}

\newtheorem{thm}{Theorem}[section]

\newtheorem{lem}[thm]{Lemma}

\newtheorem{definition}[thm]{Definition}
\newtheorem{example}[thm]{Example}
\newtheorem{remark}[thm]{Remark}
\newtheorem{proposition}[thm]{Proposition}

\theoremstyle{definition}

\begin{document}
\title[Barycenters in the Hellinger-Kantorovich space]{Barycenters in the Hellinger-Kantorovich space}
	
	\author{Nhan-Phu Chung}
	\address{Nhan-Phu Chung, Department of Mathematics, Sungkyunkwan University, 2066 Seobu-ro, Jangan-gu, Suwon-si, Gyeonggi-do, Korea 16419.} 
	\email{phuchung@skku.edu;phuchung82@gmail.com} 
	\author{Minh-Nhat Phung}
	\address{Minh-Nhat Phung, Department of Mathematics, Sungkyunkwan University, 2066 Seobu-ro, Jangan-gu, Suwon-si, Gyeonggi-do, Korea 16419. }
	\email{pmnt1114@skku.edu;pmnt1114@gmail.com} 
	\date{\today}
	\maketitle

\begin{abstract}
		Recently, Liero, Mielke and Savar\'{e} introduced the Hellinger-Kantorovich distance on the space of nonnegative Radon measures	of a metric space $X$. We prove that Hellinger-Kantorovich barycenters always exist for a class of metric spaces containing of compact spaces and Polish $\CAT(1)$ spaces; and if we assume further some conditions on the data, such barycenters are unique. We also introduce homogeneous multimarginal problems and illustrate some relations between their solutions and Hellinger-Kantorovich barycenters. Our results are analogous to the work of Agueh and Carlier for Wasserstein barycenters.  	
		
		
	\end{abstract}
	\section{Introduction}
	A notion of barycenter in the Wasserstein space over $\R^n$ has been introduced and investigated by Agueh and Carlier in \cite{MR2801182}, and has been explored extensively after that. It is closely related to optimal transport problems and has many applications in other fields such as texture mixing in computer vision \cite{MR3469435}, machine learning \cite{MR3862415}, multi-population matching equilibrium in economics \cite{MR3423268}. The Wasserstein barycenter also has been investigated for measures (not necessarily with finite support) over compact manifolds \cite{MR3590527} and locally compact geodesic spaces \cite{MR3663634}. 
	
	On the other hand, recently unbalanced optimal transport problems and various generalized Wasserstein distances on the space of finite measures have been introduced and studied by various authors \cite{CPSV,KMV,KV, MR3542003,MR3763404, PR14}. In \cite{MR3763404}, Liero, Mielke and Savar\'{e} define the Hellinger-Kantorovich distance $HK(\mu,\nu)$ between measures on a metric space via homogeneous marginals and Wasserstein distances over its Euclidean cone $\fC$. They also represent this distance in terms of Logarithmic Entropy Transport problems and establish a Benamou-Brenier formula for it. As natural we would like to ask whether we can define barycenters and show their existence, uniqueness and consistency in the Hellinger-Kantorovich space as in the Wasserstein setting. Because Hellinger-Kantorovich spaces in general are not NPC, the existence and uniqueness of barycenters do not follow straightforward from the work of \cite{Sturm}.
	
	Let $(X,d)$ be a metric space and $(\fC,d_{\fC})$ be its Euclidean cone. Let $p\geq 2$ be an integer, $\lambda_1,\lambda_2,\ldots,\lambda_p $ be positive real numbers satisfying $ \sum_{i=1}^p\lambda_i=1 $ and $ \mu_1,\mu_2,\ldots,\mu_p \in\cM(X)$. We consider the following Hellinger-Kantorovich barycenter problem 	
	$$\inf_{\mu\in \cM(X)}J(\mu)=\sum_{i=1}^p\lambda_i{HK}^2(\mu,\mu_i) \mbox{   } (\cP). $$
		
	In this article, we will first prove that barycenters in the Hellinger-Kantorovich space $\cM(X)$ always exist if our base space $X$ is a Polish metric space having property $(BC)$ (see definition \ref{D-(BC) property} in section 3). Roughly speaking, our assumptions on metric spaces concern a nice selection of the `pointwise barycenter'. Our class of examples include all compact geodesic spaces, and Polish $\CAT(1)$ spaces. To do that, we lift up a sequence of minimizing measures to their corresponding measures on the cone, and if we can find Wasserstein barycenters for the latter ones we can push back a minimizing solution in Hellinger-Kantorovich distance. As we need the existence of Wasserstein barycenters in the cone $\fC$ which is not locally compact unless $X$ is compact, we also can not apply directly results in \cite{MR2801182,MR3590527,MR3663634} for our work.
	
	Secondly, following the strategy in \cite{MR2801182}, to study the uniqueness of our barycenters we introduce dual formulations of the Hellinger-Kantorovich barycenter problem $(\cP)$. To achieve the uniqueness of barycenters, we need that the term $\sum_{i=1}^p \int_X \lambda_if_id\mu$ in formula (\ref{F-crucial formula}) on page 20 does not depend on the choice of solutions $\mu$ of $(\cP)$. That is the reason we put the constraint $\sum_{i=1}^p\lambda_if_i=0$ into the dual problem $ (\cP^\ast)$ (page 13). And to show that the dual problem $ (\cP^\ast)$ is actually a dual formulation of problem $(\cP)$, we introduce another (normalized) dual problem $ (\cP_0^\ast)$ of $(\cP)$ to finish this task via convex analysis. In addition, to get our formula (\ref{F-crucial formula}) we need to show that problem $ (\cP^\ast)$ has solutions. To do this, we need that the limit function of a sequence of functions maximizing $ (\cP^\ast)$ still satisfies the constraints in $ (\cP^\ast)$. Therefore, we can not apply directly the duality formula of $HK$ in \cite[Theorem 7.21]{MR3763404} as the constraint $\sup f<1$ there is not closed under the pointwise limit. Instead, we introduce the sets of functions $F_i, i=1,\cdots p$ in definition \ref{D-$F_i$}, and establish variants of \cite[Theorem 7.21 (i) and (ii)]{MR3763404} in lemmas \ref{L-variants of duality formula of HK1}, \ref{L-formula of HK in term of S}, \ref{L-formula of HK and LET in terms of $F_i$} to overcome this obstacle. All these steps hold for general metric spaces $X$. Finally, applying duality results in \cite{MR3763404} for the Hellinger-Kantorovich distance in terms of Logarithmic Entropy Transport problems, and the uniqueness of optimal plans for these problems \cite[Theorem 6.6]{MR3763404}, we achieve the unique barycenter under some mild conditions for our starting measures when $X=\R^n$. Note that although the authors stated \cite[Theorem 6.6]{MR3763404} only for the Euclidean case $X=\R^n$, their proof still holds for all metric spaces for which we can apply Rademacher's theorem. For simplicity, we only present our result on the uniqueness of Hellinger-Kantorovich barycenters for the case $X=\R^n$. Furthermore, our dual formulation $ (\cP^\ast)$ of the original barycenter problem is useful for us to compute Hellinger-Kantorovich barycenters (example \ref{E-dual formulation of HK barycenters}).    
		
	Finally, similar to \cite{MR2801182,MR3004954} in Wasserstein spaces, we propose and study the following homogeneous multimarginal problem
	\[\inf\left\{ \int_{\mathfrak{C}^p}c((\eta_1,\ldots,\eta_p))d\boldsymbol{\alpha},\boldsymbol{\alpha}\in\cM_2(\mathfrak{C}^p),\mathfrak{h}^2_i\boldsymbol{\alpha}=\mu_i \right\},\]	
	where the definition of homogeneous marginals $\mathfrak{h}^2_i\boldsymbol{\alpha}$ is given at definition 1 below, and the cost function $c:\mathfrak{C}^p\to [0,\infty)$ is defined by $$c(\boldsymbol{\eta}):=
	\inf_{\eta\in\mathfrak{C}}\sum_{i=1}^p\lambda_id_\mathfrak{C}^2(\eta,\eta_i) \mbox{ for every } \boldsymbol{\eta}=(\eta_1,\dots,\eta_p)\in \mathfrak{C}^p.$$
	
	 We also establish relations of their minimum solutions with Hellinger-Kantorovich barycenters whenever $X$ is a Polish space having property $(BC)$ in theorem \ref{bary+multi}. As a consequence, in theorem \ref{T-consistency} we get a consistency result of Hellinger-Kantorovich barycenters which has been proved for Wasserstein spaces in \cite{MR3338645}. We also illustrate an explicit formula of Hellinger-Kantorovich barycenters via solutions of the homogeneous multimarginal problem when $X$ is the $(n-1)$-sphere $\mathbb{S}^{n-1}$.
	
	Our paper is organized as following. In section 2, we review Wasserstein distances, Hellinger-Kantorovich distances, and Logarithmic Entropy Transport. In section 3, we introduce metric spaces satisfying property $(BC)$ and prove the existence of Hellinger-Kantorovich barycenters for metric spaces having property $(BC)$. In section 4, we establish dual formulations of our original barycenter problem and prove the uniqueness of barycenters. And in the last section, we show our results for homogeneous multimarginal problems.     
	
	We also study barycenters for generalized Wasserstein spaces in our companion paper \cite{CT}.
	
	\textbf{Relation to \cite{FMS}.} After we posted our preprint on Arxiv in September 2019, soon after that in October 2019, Friesecke, Matthes and Schmitzer announced the related article \cite{FMS} which studies the barycenters for the Hellinger-Kantorovich distance over $\R^d$. Their setting in the article is restricted to compact, convex subsets of $\R^d$ while ours focuses on more general metric spaces beyond compact ones. However, their exposition is rather different from ours. First, the authors provide an alternate proof for the uniqueness of the Hellinger-Kantorovich barycenters over $\R^d$ without using their dual formulations. Second, using a detailed study of the convexity properties of the multimarginal cost, they establish a multimarginal formulation of the Hellinger-Kantorovich barycenter problem. The latter result relies heavily on convex duality between positively 1-homogeneous integral functionals on measures and indicator functions on continuous functions \cite{Rock}. In addition, they also provide a detailed analysis of the Hellinger-Kantorovich barycenters between Dirac measures and show corresponding numerical illustrations. In contrast, with a different method we introduce a homogeneous multimarginal formulation of the Hellinger-Kantorovich barycenter problem for more general metric spaces.       
	
	\textbf{Acknowledgements:} Part of this paper was carried out when N. P. Chung visited University of Science, Vietnam National University at Ho Chi Minh city in summer 2019. He thanks the math department there for its warm hospitality. The authors were partially supported by the National Research Foundation of Korea (NRF) grants funded by the Korea government No. NRF- 2016R1A5A1008055 , No. NRF-2016R1D1A1B03931922 and No. NRF-2019R1C1C1007107. We thank the anonymous referee for her/his useful comments which vastly improve the paper.      	
	\section{Preliminaries}
	Given a metric space $(X,d)$, we denote by $\cM(X)$ and $\cP(X)$ the spaces of all finite nonnegative Radon measures and all probability Radon measures on $X$, respectively. The set $\cP_2(X)$ (resp. $\cM_2(X))$ is defined as the set of all $\mu\in \cP(X)$ (resp. $\cM(X)$) such that there exists some (and therefore every) $x_0\in X$ such that $\int_Xd^2(x_0,x)d\mu(x)<\infty$. Given two measures $\mu_1,\mu_2\in\mathcal{P}_2(X)$, the Wasserstein distance between $\mu_1$ and $\mu_2$ is defined by $$W_2(\mu_1,\mu_2):=\left(\inf_{\pi\in \Pi (\mu_1,\mu_2)}\int_{X\times X}d^2(x,y)d\pi(x,y)\right)^{1/2},$$
	where $\Pi (\mu_1,\mu_2)$ is the set of all probability measures $\pi\in \cP(X\times X)$ such that its marginals are $\mu_1$ and $\mu_2$.
	
	For every $\mu\in \cM(X)$ and every measurable map $T:X\to Y$ between metric spaces $X$ and $Y$ we will denote by $T_\#\mu\in \cM(Y)$ the push forward measure defined by $T_\#\mu(A):=\mu(T^{-1}(A))$ for every Borel set $A$ in $Y$.
	
	Given $\mu,\nu\in \mathcal{M}(X)$, we say that $\mu$ is absolutely continuous with respect to $\nu$ and write $\mu \ll \nu$ if $\nu(A)=0$ yields $\mu(A)=0$ for every Borel subset $A$ of $X$. We call $\mu$ and $\nu$ mutually singular and write $\mu \perp \nu$ if there exists a Borel subset $B$ of $X$ such that $\mu(B)=\nu(X\backslash B)=0$. 	
	
	Let $(X,d)$ be a metric space. For every $a\geq 0$, we will denote by $d_a:=d\wedge a$ the truncated metric, i.e. $d_a(x,y)=\min\{d(x,y),a\}$ for every $x,y\in X$. The (Euclidean) cone $\fC$ of $X$, as a set, is the quotient space of $X\times [0,\infty)$ by the equivalence relation $\sim$ given by $(x_1,r_1)\sim (x_2,r_2)$ if and only if $r_1=r_2=0$ or $r_1=r_2,x_1=x_2$. We denote the equivalence classes by $\eta=[x,r]$. The special class $[x,0]$ is denoted by $\mathfrak{o}$. Then the function $d_\fC:\fC\times \fC\to [0,\infty)$ defined by
	$$d_\fC^2([x_1,r_1],[x_2,r_2]):=r_1^2+r_2^2-2r_1r_2\cos(d_\pi(x_1,x_2))$$
	is a metric. 
	
	Fixing a point $\bar{x}\in X$, we define two maps $\mathbf{r}:\fC\to [0,\infty), \mathbf{r}([x,r]):=r$ and $\mathbf{x}:\fC\to X, \mathbf{x}([x,r]):=x$ if $r>0$ and equals to $\bar{x}$ if $r=0$. For every $n\in \N$, we will denote by $\boldsymbol{\eta}=(\eta_1,\dots, \eta_n)=([x_1,r_1],\dots,[x_n,r_n])\in \fC^{ n}$, and define 
	$\mathbf{r}_i(\boldsymbol{\eta}):=\mathbf{r}(\eta_i)=r_i$, $\mathbf{x}_i(\boldsymbol{\eta}):=\mathbf{x}(\eta_i)$. 
\begin{definition}	
\label{D-homogeneous marginals}
	For every $\boldsymbol{\alpha}\in  \cM(\fC^{ n})$ with $\int_{\fC^{ n}}\sum_{i=1}^n\mathbf{r}_i^2d\boldsymbol{\alpha}<\infty$,
	we define its homogeneous marginals 
	$$\mathfrak{h}_i^2(\boldsymbol{\alpha}):=(\mathbf{x}_i)_{\#}(\mathbf{r}_i^2\boldsymbol{\alpha})\in \cM(X), i=1,\dots, n.$$ 	
	When $n=1$ we will write $\mathfrak{h}$ instead of $\mathfrak{h}_1$.
\end{definition}	
	
	Given $\mu_1,\mu_2\in \cM(X)$, the Hellinger-Kantorovich distance between $\mu_1$ and $\mu_2$ is defined by
	$$HK^2(\mu_1,\mu_2):=\inf\big\{\int_{\fC^2}d_\fC^2(\eta_1,\eta_2)d\boldsymbol{\alpha}:\boldsymbol{\alpha}\in \cM_2(\fC^2), \mathfrak{h}_i^2\boldsymbol{\alpha}=\mu_i, i=1,2\big\}.$$
	For every $R\geq 0$, let us set 
	$$\fC[R]:=\{[x,r]\in \fC:r\leq R\}.$$
	
	Let $\ell:[0,\infty]\to [0,\infty]$ be the function defined by $\ell(t):=\log(1+\tan^2(\min\{t,\pi/2\})),$ for every $t\in [0,\infty]$.
	For every $\gamma\in \cM(X\times X)$, its marginals are denoted by $\gamma_1$ and $\gamma_2$. Given $\mu_1,\mu_2\in \cM(X)$, we define 
	\begin{align*}
	\LET(\mu_1,\mu_2):=\inf_{\gamma\in \cM(X\times X)}\left\{\sum_{i}\int_X(\sigma_i\log\sigma_i-\sigma_i+1)d\mu_i+\int_{X\times X}\ell(d(x_1,x_2))d\gamma \right\}, 
	\end{align*}
	where $\sigma_i=\frac{d\gamma_i}{d\mu_i}$ is the Radon-Nikodym derivative of $\gamma_i$ with respect to $\mu_i$, and we also follow the convention that $0\log 0=0$. We denote by $\Opt_{\LET}(\mu_1,\mu_2)$ the set of all $\gamma\in \cM(X\times X)$ minimizing the above $\inf$.
	
	As we use the following results several times, we recall them.
	\begin{lem}
		\label{L-lifting gluing measure}
		(\cite[Remark 7.12]{MR3763404})
		Let $(X,d)$ be a metric space. For every $n\geq 2$ and every $\mu_1,\dots,\mu_n\in \cM(X)$, there exists $\boldsymbol{\beta}\in \cP_2(\fC^n)$ concentrated on $\{\boldsymbol{\eta}\in \fC^n:\sup_j\mathrm{r}_j(\boldsymbol{\eta})\leq \boldsymbol{\Xi}\}$ such that 
		$$\mathfrak{h}_j^2\boldsymbol{\beta}=\mu_j \mbox{ and } HK^2(\mu_1,\mu_j)=\int d^2_\fC(\eta_1,\eta_j)d\boldsymbol{\beta}, j=1,\dots, n,$$ where 
		$\boldsymbol{\Xi}=\sqrt{\mu_1(X)}+\sum_{j=2}^nHK(\mu_1,\mu_j)$.
	\end{lem}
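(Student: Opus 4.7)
The plan is a rescaling-and-gluing construction in the spirit of \cite{MR3763404}. The key tool is the invariance of the Hellinger-Kantorovich functional under radial rescalings: for any measurable $\vartheta:\fC^2\to(0,\infty)$ the transformation
\[
\boldsymbol\alpha\;\longmapsto\;(S_\vartheta)_\#(\vartheta^{-2}\boldsymbol\alpha),\qquad S_\vartheta([x_1,r_1],[x_2,r_2]):=([x_1,\vartheta r_1],[x_2,\vartheta r_2]),
\]
preserves both homogeneous marginals $\mathfrak h_i^2$ and the cost $\int d_\fC^2\,d\boldsymbol\alpha$; this is immediate from the $2$-homogeneity of $r_i^2$ and of $d_\fC^2$ in the radial variables.

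First, for each $j\in\{2,\dots,n\}$ the direct method (tightness in $\cM_2(\fC^2)$ together with narrow lower semicontinuity of $\int d_\fC^2\,d\cdot$) produces an optimiser $\boldsymbol\alpha_j\in\cM_2(\fC^2)$ with $\mathfrak h_1^2\boldsymbol\alpha_j=\mu_1$, $\mathfrak h_2^2\boldsymbol\alpha_j=\mu_j$ and $\int d_\fC^2\,d\boldsymbol\alpha_j=HK^2(\mu_1,\mu_j)$. Then rescale with $\vartheta_j=\sqrt{\mu_1(X)}/r_1$ on $\{r_1>0\}$ (treating the $\{r_1=0\}$-part by first replacing $r_1$ with $r_1\vee\varepsilon$ and letting $\varepsilon\downarrow 0$) to obtain a probability measure $\tilde{\boldsymbol\alpha}_j\in\cP_2(\fC^2)$ whose first $\fC$-marginal equals the fixed reference probability
\[
\nu_1\,:=\,\mu_1(X)^{-1}\,\mu_1\otimes\delta_{\sqrt{\mu_1(X)}},
\]
which is concentrated on $\{r=\sqrt{\mu_1(X)}\}$ and satisfies $\mathfrak h\,\nu_1=\mu_1$.

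With a common first marginal in hand, glue: disintegrate $\tilde{\boldsymbol\alpha}_j=\nu_1\otimes K_j$ against $\nu_1$, where $K_j(\eta_1,\cdot)$ is a probability kernel on $\fC$, and set
\[
\boldsymbol\beta\,:=\,\int_{\fC}\delta_{\eta_1}\otimes K_2(\eta_1,\cdot)\otimes\cdots\otimes K_n(\eta_1,\cdot)\,d\nu_1(\eta_1)\,\in\,\cP_2(\fC^n).
\]
By construction the $(1,j)$-projection of $\boldsymbol\beta$ equals $\tilde{\boldsymbol\alpha}_j$, so $\mathfrak h_j^2\boldsymbol\beta=\mu_j$ and $\int d_\fC^2(\eta_1,\eta_j)\,d\boldsymbol\beta=HK^2(\mu_1,\mu_j)$ for every $j\ge 2$ (the case $j=1$ is trivial).

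The main obstacle is the pointwise support bound $\sup_j r_j\le\boldsymbol\Xi$. The normalisation above only forces $r_1=\sqrt{\mu_1(X)}$ on $\supp(\nu_1)$, while the factor $\vartheta_j=\sqrt{\mu_1(X)}/r_1$ can blow up on the portion of $\boldsymbol\alpha_j$ where the original $r_1$ is small, so the rescaled $r_j$ is not automatically bounded. The bound has to be extracted by a supplementary truncation of each bivariate optimiser: combining the structural fact from \cite{MR3763404} that optimal $HK$-plans are supported on $\{d_\pi(x_1,x_2)\le\pi/2\}$ with a capping rescaling that forces the radii below a prescribed threshold, one argues that any mass violating $d_\fC(\eta_1,\eta_j)\le HK(\mu_1,\mu_j)$ can be redistributed without increasing the cost, contradicting optimality. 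Combined with $r_1=\sqrt{\mu_1(X)}$ on $\supp(\nu_1)$ and the triangle inequality $r_j\le r_1+d_\fC(\eta_1,\eta_j)$ in $(\fC,d_\fC)$, summing over $j=2,\dots,n$ yields $\sup_j r_j\le\sqrt{\mu_1(X)}+\sum_{k=2}^n HK(\mu_1,\mu_k)=\boldsymbol\Xi$, completing the construction.
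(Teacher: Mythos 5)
The paper does not prove this lemma at all --- it is quoted verbatim from \cite[Remark 7.12]{MR3763404} --- so your attempt can only be judged on its own merits. The overall skeleton (take bivariate optimal plans on the cone, use $2$-homogeneity to normalise, glue along a common first marginal) is the right one, but the step you yourself single out as ``the main obstacle'' is resolved incorrectly, in two places. First, the assertion that an optimal plan $\boldsymbol\alpha_j$ can be arranged to satisfy $d_\fC(\eta_1,\eta_j)\le HK(\mu_1,\mu_j)$ pointwise on its support is false: $HK(\mu_1,\mu_j)$ is the $L^2(\boldsymbol\alpha_j)$-norm of $d_\fC(\eta_1,\eta_j)$, so it controls the quadratic mean and not the essential supremum. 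Already for $\mu_1=\delta_{x_0}$ and $\mu_2=a\delta_{x_0}+b\delta_{x_1}$ with $0<d(x_0,x_1)<\pi/2$, any optimal plan splits the fibre over $x_0$ into pieces travelling different cone distances, one of which necessarily exceeds the quadratic mean; no redistribution argument can remove this. Second, the normalisation $\vartheta_j=\sqrt{\mu_1(X)}/r_1$ cannot handle the part of $\boldsymbol\alpha_j$ sitting over $\{r_1=0\}$, which is genuinely present whenever some $\mu_j$-mass is created rather than transported (e.g.\ when part of $\supp\mu_j$ lies at distance $\ge\pi/2$ from $\supp\mu_1$): replacing $r_1$ by $r_1\vee\varepsilon$ sends the corresponding $r_j$ to $\sqrt{\mu_1(X)}\,r_j/\varepsilon\to\infty$, so the limit measure loses part of the constraint $\mathfrak h_j^2\boldsymbol\beta=\mu_j$, and in any case the radii blow up rather than stay bounded. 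Consequently the common first marginal is not $\nu_1$ but $\nu_1$ plus a vertex atom whose mass varies with $j$, which also has to be equalised before one can glue.

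The form of $\boldsymbol\Xi$ itself points to the correct mechanism, which is an $L^2$ (Minkowski) bound followed by a joint dilation rather than a pointwise bound. Glue the bivariate optimal plans into any $\boldsymbol\beta'\in\cP_2(\fC^n)$ with $\mathfrak h_j^2\boldsymbol\beta'=\mu_j$ and $\int d_\fC^2(\eta_1,\eta_j)\,d\boldsymbol\beta'=HK^2(\mu_1,\mu_j)$, without worrying about supports. The cone triangle inequality gives $\sup_j r_j\le r_1+\sum_{j\ge2}d_\fC(\eta_1,\eta_j)$ pointwise, whence by Minkowski's inequality in $L^2(\boldsymbol\beta')$,
\begin{equation*}
\Bigl\|\sup_j r_j\Bigr\|_{L^2(\boldsymbol\beta')}\;\le\;\|r_1\|_{L^2(\boldsymbol\beta')}+\sum_{j\ge2}\|d_\fC(\eta_1,\eta_j)\|_{L^2(\boldsymbol\beta')}\;=\;\sqrt{\mu_1(X)}+\sum_{j\ge2}HK(\mu_1,\mu_j)\;=\;\boldsymbol\Xi.
\end{equation*}
Now apply the dilation $\vartheta(\boldsymbol\eta):=\sup_j r_j(\boldsymbol\eta)/\boldsymbol\Xi$ to \emph{all} coordinates simultaneously, i.e.\ pass to $(prd_\vartheta)_\#(\vartheta^2\boldsymbol\beta')$: by $2$-homogeneity this preserves every $\mathfrak h_j^2$ and every $\int d_\fC^2(\eta_1,\eta_j)\,d(\cdot)$, it maps the support into $\{\sup_j r_j\le\boldsymbol\Xi\}$ by construction, and its total mass is $\boldsymbol\Xi^{-2}\int(\sup_j r_j)^2\,d\boldsymbol\beta'\le1$ by the displayed inequality, so one reaches a probability measure by adding the deficit as an atom at $(\mathfrak o,\dots,\mathfrak o)$, which affects neither the homogeneous marginals nor the costs. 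Your pointwise-bound strategy should be replaced by this argument; as written, the proof has a genuine gap.
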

	\begin{thm}
		\label{T-identity between HK and LET}
		(\cite[Theorem 6.3 and Theorem 7.20]{MR3763404})
		Let $(X,d)$ be a metric space. For all $\mu_1,\mu_2\in \cM(X)$ we have
		\begin{align*}
		HK^2(\mu_1&,\mu_2)=\LET(\mu_1,\mu_2)=\sup\big\{\sum_i\int_X f_id\mu_i:f_i\in LSC_s(X), \sup f_i<1,\\
		&(1-f_1(x_1))(1-f_2(x_2))\geq \cos^2(d_{\pi/2}(x_1,x_2)) \mbox{ for every } x_1,x_2\in X\big\},
		\end{align*} 
		where $LSC_s(X)$ is the space of all lower semi-continuous functions on $X$ with finite images.
		The identity still holds if we replace the space $LSC_s(X)$ by $C_b(X)$, the space of all bounded continuous functions on $X$. 
	\end{thm}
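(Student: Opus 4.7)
The plan is to prove the three stated identities in succession: the coincidence $HK^2 = \LET$, the primal--dual formula expressing $\LET$ as a supremum over $LSC_s(X)$, and the equivalence of taking the supremum over $LSC_s(X)$ versus $C_b(X)$.

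For $HK^2(\mu_1, \mu_2) = \LET(\mu_1, \mu_2)$, I would build a bridge between the cone formulation and the entropy--transport formulation using the pointwise identity
\[ d_\fC^2([x_1, r_1], [x_2, r_2]) = (r_1 - r_2)^2 + 2 r_1 r_2 \bigl(1 - \cos d_\pi(x_1, x_2)\bigr). \]
Given a competitor $\boldsymbol{\alpha} \in \cM_2(\fC^2)$ with $\mathfrak{h}_i^2 \boldsymbol{\alpha} = \mu_i$, disintegrate $\boldsymbol{\alpha}$ along the map $(\mathbf{x}_1, \mathbf{x}_2): \fC^2 \to X \times X$ (restricted to the set where both radii are positive) to obtain a plan $\gamma$ on $X \times X$, with the remaining cone mass accounting for the mutually singular parts of $\mu_i$ and $\gamma_i$. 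For fixed $(x_1, x_2)$, minimizing the cone integrand over the conditional distribution of $(r_1, r_2)$ subject to the prescribed second-moment constraints reduces, via the identity $1 + \tan^2 d_{\pi/2} = \sec^2 d_{\pi/2}$, to the pointwise cost $\ell(d(x_1, x_2))$; combined with the entropy contribution $H(\sigma_i)$ coming from the absolutely continuous parts $\sigma_i = \tfrac{d\gamma_i}{d\mu_i}$, this gives $HK^2 \geq \LET$. The reverse inequality follows by explicit lifting: given $\gamma \in \Opt_{\LET}(\mu_1, \mu_2)$ with densities $\sigma_i$, define $\boldsymbol{\alpha}$ as the image of $\gamma$ under $(x_1, x_2) \mapsto ([x_1, 1/\sqrt{\sigma_1(x_1)}], [x_2, 1/\sqrt{\sigma_2(x_2)}])$, with an additional contribution at $\mathfrak{o}$ accounting for the singular parts of $\mu_i$ with respect to $\gamma_i$, and check by direct computation that the cone cost matches the LET cost.

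For the dual formula, I would apply Fenchel--Rockafellar duality to $\LET$. The Legendre transform of $H(t) = t \log t - t + 1$ is $H^*(s) = e^s - 1$, so dualizing the densities $\sigma_i$ against test functions $\phi_i$ and exploiting that the nonnegativity of $\gamma$ forces the constraint $\phi_1(x_1) + \phi_2(x_2) + \ell(d(x_1, x_2)) \geq 0$, one arrives at
\[ \LET(\mu_1, \mu_2) = \sup \bigg\{ \sum_i \int_X \bigl(1 - e^{\phi_i}\bigr) d\mu_i : \phi_1(x_1) + \phi_2(x_2) + \ell(d(x_1, x_2)) \geq 0 \bigg\}. \]
Substituting $f_i := 1 - e^{\phi_i}$, the objective becomes $\sum_i \int f_i d\mu_i$, the condition $f_i < 1$ is forced by $\phi_i$ being real-valued, and the relation $\ell = \log \sec^2(d_{\pi/2})$ converts the linear constraint into the multiplicative one $(1 - f_1)(1 - f_2) \geq \cos^2 d_{\pi/2}$. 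The minimax exchange required to justify this duality can be obtained through a truncation argument (replacing $\mu_i$ by compactly supported approximations) together with monotone convergence, using that $\ell$ is bounded on bounded-distance sets.

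To pass from $LSC_s(X)$ to $C_b(X)$, I would use a two-sided monotone approximation. Any bounded lower semicontinuous function on a metric space is a pointwise increasing limit of bounded Lipschitz (hence continuous) functions via the Moreau--Yosida regularization $f^n(x) := \inf_y \{ f(y) + n d(x, y) \}$, while conversely any $f \in C_b(X)$ is approximated from below by simple lower semicontinuous functions with finite range. Approximation from below preserves both the multiplicative constraint and the strict inequality $\sup f_i < 1$ (after a harmless small negative shift), and monotone convergence handles the integrals against $\mu_i$. The hardest part, I expect, is the measure-theoretic bookkeeping in the cone-to-base passage: carefully accounting for the mass of $\boldsymbol{\alpha}$ sitting over $\mathfrak{o}$ in each factor and matching it to the singular parts of $\mu_i$ relative to $\gamma_i$. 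The non-compactness of $X$ also injects some delicate uniform estimates into the Fenchel--Rockafellar exchange, but both are technical rather than conceptual obstacles and follow standard strategies from unbalanced optimal transport.
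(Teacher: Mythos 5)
The paper offers no proof of this statement: it is recorded as a direct quotation of \cite[Theorems 6.3 and 7.20]{MR3763404}, so there is no internal argument to compare yours against, and any self-contained proof necessarily amounts to reconstructing the Liero--Mielke--Savar\'e development. Judged on those terms, your outline does capture the architecture of their proof: the lift $(x_1,x_2)\mapsto([x_1,\sigma_1^{-1/2}],[x_2,\sigma_2^{-1/2}])$ of an entropy--transport plan (with the singular parts sent to the apex $\mathfrak{o}$) for one inequality, Fenchel--Rockafellar duality with $H^*(s)=e^s-1$ and the substitution $f_i=1-e^{\phi_i}$ for the dual formula, and monotone approximation from below for the exchange of $LSC_s(X)$ and $C_b(X)$ (which does preserve the multiplicative constraint, since the factors $1-f_i$ stay positive and increase).

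Two steps, however, are genuine gaps rather than bookkeeping. First, the cone distance is defined through $d_\pi$ while $\ell$ is $+\infty$ beyond $\pi/2$; the identity $HK^2=\LET$ requires knowing that one may replace $d_\pi$ by $d_{\pi/2}$ in the cone cost, i.e.\ that coupling points at distance in $[\pi/2,\pi)$ is never better than sending both masses to $\mathfrak{o}$ (this uses $\cos d\le 0$ there, and is exactly \cite[Lemma 7.19]{MR3763404}, which the present paper invokes elsewhere); your sketch silently switches truncation thresholds. Second, your description of the key reduction --- ``minimizing the cone integrand over the conditional distribution of $(r_1,r_2)$ subject to the prescribed second-moment constraints'' --- is not the right minimization and would not produce $\ell$. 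The correct bridge is the marginal perspective identity $\inf_{\theta>0}\big(s_1F(\theta/s_1)+s_2F(\theta/s_2)+\theta\,\ell(d)\big)=s_1+s_2-2\sqrt{s_1s_2}\cos(d_{\pi/2})$ with $F(u)=u\log u-u+1$, where $\theta$ is a scalar mass variable (the density of the coupling), not a conditional law; without this computation the two functionals are not connected. Relatedly, the Fenchel--Rockafellar exchange on a general, possibly non-compact metric space is a substantial argument in \cite{MR3763404} (compact exhaustion plus a minimax theorem plus inner regularity of Radon measures), so labelling it ``technical rather than conceptual'' understates where the work lies. In short: the plan points at the right proof, but the parts it defers are precisely the content of the cited theorems.
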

	
	\section{Existence of Hellinger-Kantorovich barycenters}
	Let $(X,d)$ be a metric space. Let $p\geq 2$ be an integer, $\lambda_1,\lambda_2,\ldots,\lambda_p $ be positive real numbers satisfying $ \sum_{i=1}^p\lambda_i=1 $ and $ \mu_1,\mu_2,\ldots,\mu_p \in\cM(X)$. We consider the following problem 
	$$ (\cP) \mbox{   }  \inf_{\mu\in \cM(X)}J(\mu)=\sum_{i=1}^p\lambda_i{HK}^2(\mu,\mu_i). $$
	
	We call a solution of problem $(\cP)$ a Hellinger-Kantorovich barycenter of the measures $\mu_i$ with weights $\lambda_i$. To study the existence of Hellinger-Kantorovich barycenters, we introduce some classes of metric spaces.	
\begin{definition}	
\label{D-(BC) property}
	We say that a metric space $(Y,d)$ has property $(B)$ if for every $k\in \N$, $y_1,\dots, y_k\in Y$, and every $\lambda_1,\dots,\lambda_k\geq 0$ with $\sum_{i=1}^k\lambda_i=1$, the problem $\inf_{y\in Y}\sum_{i=1}^k\lambda_id^2(y,y_i)$ attains the minimum and the set $\{z\in Y:\sum_{i=1}^k\lambda_id^2(z,y_i)=\min_{y\in Y} \sum_{i=1}^k\lambda_id^2(y,y_i)\}$ is also $\sigma$-compact. We say that a metric space $(Y,d)$ has property $(BC)$ if its cone $\fC$ has property $(B)$.
\end{definition}
	
	\begin{lem}
		\label{L-lower semi-continuity of cost functions induced from barycenters}
		Let $(Y,d)$ be a metric space. Then for every $k\in \N$, and every $\lambda_1,\dots,\lambda_k\geq 0$ with $\sum_{i=1}^k\lambda_i=1$, the map $c:Y^k\to [0,\infty)$ defined by $$c((y_1,\dots, y_k)):=\inf_{y\in Y}\sum_{i=1}^k\lambda_id^2(y,y_i) \mbox{ for every }	(y_1,\dots,y_k)\in Y^k	$$ is lower semi-continuous. 
	\end{lem}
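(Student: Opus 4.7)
The plan is to verify lower semicontinuity sequentially. I take a sequence $(y_1^n,\dots,y_k^n)\to (y_1,\dots,y_k)$ in $Y^k$, pass to a subsequence along which $c(y_1^n,\dots,y_k^n)$ converges to $L:=\liminf_n c(y_1^n,\dots,y_k^n)$ (assumed finite, else the inequality is trivial), and aim to show $c(y_1,\dots,y_k)\leq L$. For brevity, write $F_n(y):=\sum_{i=1}^k\lambda_id^2(y,y_i^n)$ and $F(y):=\sum_{i=1}^k\lambda_id^2(y,y_i)$, so that $c(y_1^n,\dots,y_k^n)=\inf_Y F_n$ and $c(y_1,\dots,y_k)=\inf_Y F$.

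Since no minimizer of $F_n$ is available in general, I choose a quasi-minimizer $y^n\in Y$ with $F_n(y^n)\leq c(y_1^n,\dots,y_k^n)+1/n$. For any index $i$ with $\lambda_i>0$, the inequality $\lambda_i d^2(y^n,y_i^n)\leq F_n(y^n)\leq L+2$ (for $n$ large) yields the uniform bound $d(y^n,y_i^n)\leq \sqrt{(L+2)/\lambda_i}$, and the triangle inequality then gives a uniform bound on $d(y^n,y_i)$ as well.

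The central estimate compares $F(y^n)$ and $F_n(y^n)$ at these same points. Using the factorization $d^2(y^n,y_i)-d^2(y^n,y_i^n)=(d(y^n,y_i)-d(y^n,y_i^n))(d(y^n,y_i)+d(y^n,y_i^n))$ together with the reverse triangle inequality $|d(y^n,y_i)-d(y^n,y_i^n)|\leq d(y_i,y_i^n)$ and the uniform bound from the previous step, one obtains $|F(y^n)-F_n(y^n)|\to 0$ (terms with $\lambda_i=0$ vanish identically and need no bound). Consequently $c(y_1,\dots,y_k)\leq F(y^n)=F_n(y^n)+o(1)\leq c(y_1^n,\dots,y_k^n)+1/n+o(1)\to L$, which is the desired inequality.

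The main obstacle to sidestep is that $Y$ need not be locally compact, so one cannot hope to extract a convergent subsequence from $(y^n)$ and pass to a limit inside $F$ directly. The detour through a direct metric comparison of $F$ and $F_n$ evaluated at the quasi-minimizers $y^n$ — rather than at a would-be limit point of $y^n$ — circumvents this difficulty and yields an argument valid in every metric space, with no compactness or completeness assumption on $Y$.
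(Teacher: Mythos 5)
Your proof is correct and follows essentially the same route as the paper's: both arguments evaluate the limit functional at (quasi-)minimizers of the perturbed functionals, use the boundedness of $d(y^n,y_i)$ to control the difference $|F(y^n)-F_n(y^n)|$ via the reverse triangle inequality, and conclude $c(\boldsymbol{y})\leq\liminf_n c(\boldsymbol{y}^n)$. Your write-up is simply a more explicit, quantified version of the paper's rather terse argument.
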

	\begin{proof}
		Let $\{\boldsymbol{y}^n=(y_1^n,\dots,y_k^n)\}$ be a sequence in $Y^k$ converging to $\boldsymbol{y}=(y_1,\dots,y_k)$ in $Y^k$. Let $ y^n(j) $ be a sequence that minimizes $ \sum_{i=1}^k\lambda_id^2(y,y_i^n) $.
		As $ \{d(y^n(j),y_i)\}_{n,j} $ is bounded we get that for sufficient large $ n $, the difference between $ d^2(y^n(j),y^n_i)$ and $ d^2(y^n(j),y_i) $ is small for all $ j $. Hence the difference between $ \sum_{i=1}^k\lambda_id^2(y^n(j),y_i) $ and $ c(\boldsymbol{y}^n) $ is small for sufficient large $ n $ and sufficient large $ j $. Thus, $ \liminf_n c(\boldsymbol{y}^n)\ge c(\boldsymbol{y}) $.
	\end{proof}
	Now, we have versions of \cite[Lemma 7 and Theorem 8]{MR3663634} for metric spaces having property (B). 
	\begin{lem}
		\label{L-Borel barycenter map}
		Let $(Y,d)$ be a Polish space satisfying property $(B)$. Then for every $k\in \N$ and weights $\lambda_1,\dots, \lambda_k$, there exists a Borel map $T:Y^k\to Y$ associating $(y_1,\dots,y_k)$ to a minimum of $\inf_{y\in Y}\sum_{i=1}^k\lambda_id^2(y,y_i)$.
	\end{lem}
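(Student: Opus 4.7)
The plan is to realize $T$ as a measurable selector of the set-valued map
$$F \colon Y^k \to 2^Y, \qquad F(\boldsymbol{y}) = \Bigl\{z\in Y : \sum_{i=1}^k \lambda_i d^2(z,y_i) = c(\boldsymbol{y})\Bigr\},$$
where $c(\boldsymbol{y}) := \inf_{y\in Y}\sum_{i=1}^k\lambda_i d^2(y,y_i)$. Property $(B)$ tells us simultaneously that $F(\boldsymbol{y})$ is non-empty for every $\boldsymbol{y}\in Y^k$ and that each such section is $\sigma$-compact. So there is nothing to prove about existence or the sectional structure; everything reduces to producing a Borel selection.

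The first step is to check that the graph
$$\mathrm{Gr}(F) = \bigl\{(\boldsymbol{y},z)\in Y^k\times Y : \Phi(\boldsymbol{y},z):=c(\boldsymbol{y}) - \textstyle\sum_{i=1}^k \lambda_i d^2(z,y_i) \geq 0\bigr\}$$
is Borel in the Polish space $Y^k\times Y$. The function $(\boldsymbol{y},z)\mapsto \sum_{i=1}^k \lambda_i d^2(z,y_i)$ is continuous, and by Lemma \ref{L-lower semi-continuity of cost functions induced from barycenters} the cost $c$ is lower semi-continuous on $Y^k$. Hence $\Phi$ is lower semi-continuous, so $\{\Phi\geq 0\} = \bigcap_n \{\Phi > -1/n\}$ is a $G_\delta$, in particular Borel. (Note that the reverse inequality $\Phi\leq 0$ holds automatically, so this set equals $\mathrm{Gr}(F)$.)

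The second step is to invoke a classical measurable selection theorem. Since $Y^k$ and $Y$ are Polish, $\mathrm{Gr}(F)$ is Borel, each section $F(\boldsymbol{y})$ is $\sigma$-compact, and $F(\boldsymbol{y})\neq\emptyset$ for every $\boldsymbol{y}$, the Arsenin--Kunugui selection theorem produces a Borel map $T\colon Y^k\to Y$ with $T(\boldsymbol{y})\in F(\boldsymbol{y})$ for every $\boldsymbol{y}\in Y^k$, which is exactly the claimed barycenter selector.

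The only genuinely delicate point is that property $(B)$ furnishes just $\sigma$-compactness rather than compactness or closedness of the sections. Consequently one cannot apply the simpler Kuratowski--Ryll-Nardzewski theorem (which requires closed sections); this is precisely why the hypothesis in Definition \ref{D-(BC) property} is phrased with $\sigma$-compactness and why Arsenin--Kunugui is the correct tool. Beyond this, all the verifications (lower semi-continuity of $c$, Borelness of the graph) are routine consequences of Lemma \ref{L-lower semi-continuity of cost functions induced from barycenters}.
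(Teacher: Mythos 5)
Your argument is correct and is essentially the paper's own proof: the paper applies Brown--Purves \cite[Theorem 1]{BP} (a measurable-selection theorem for Borel sets with $\sigma$-compact sections, i.e.\ the same Arsenin--Kunugui-type result you invoke) to exactly the set $\mathrm{Gr}(F)$ you describe. You merely make explicit the Borelness of the graph via lower semi-continuity of $c$, which the paper leaves implicit.
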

	\begin{proof}
		Applying \cite[Theorem 1]{BP} with $U=Y^k, V=Y$ to the subset 
		$$S=\big\{(y_1,\dots,y_k,y)\in U\times V: \sum_{i=1}^k\lambda_id^2(y,y_i)=\inf_{z\in Y}\sum_{i=1}^k\lambda_id^2(z,y_i)\big\},$$
		we get the result.
	\end{proof}
	Recall that given $\mu_1,\dots,\mu_k\in \cP(Y)$ we denote $\Pi(\mu_1,\dots, \mu_k)$ as the set of all $\alpha\in \cP(Y^k)$ with marginals are $\mu_1,\dots, \mu_k$. Using lemma \ref{L-lower semi-continuity of cost functions induced from barycenters}, lemma \ref{L-Borel barycenter map} and the same proof of \cite[Theorem 8]{MR3663634} we get
	\begin{thm}
		\label{T-existence of barycenter in Wasserstein space}
		Let $(Y,d)$ be a Polish space satisfying property $(B)$. Then for every $k\in \N$, $\mu_1,\dots, \mu_k\in \cP_2(Y)$ and weights $\lambda_1,\dots, \lambda_k$, there exists a measure $\alpha\in \Pi(\mu_1,\dots,\mu_k)$ minimizing 
		$$\beta \mapsto \int \inf_{y\in E}\sum_{i=1}^k\lambda_id^2(y_i,y)d\beta(y_1,\dots,y_k).$$
		Moreover, let $T:Y^k\to Y$ be a Borel map as in Lemma \ref{L-Borel barycenter map} then the measure $T_\#\alpha$ is a barycenter of $\mu_i$ with weight $\lambda_i$ in Wasserstein space and if this map $T$ is unique then $T_\#\alpha$ is the unique barycenter.	
		
	\end{thm}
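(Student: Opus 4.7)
The plan is to follow the multimarginal strategy for Wasserstein barycenters (Agueh--Carlier, and specifically \cite{MR3663634}), with Lemmas \ref{L-lower semi-continuity of cost functions induced from barycenters} and \ref{L-Borel barycenter map} supplying the lower-semicontinuity and Borel-selection inputs tailored to property $(B)$.

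First I would establish existence of a minimizer $\alpha$ of the multimarginal problem by the direct method. The set $\Pi(\mu_1,\ldots,\mu_k)\subset \cP(Y^k)$ is tight (each marginal is the fixed $\mu_i$) and weakly closed, hence weakly compact by Prokhorov. The functional $\beta\mapsto \int c\,d\beta$ is lower semicontinuous for narrow convergence: since $c$ is nonnegative and lower semicontinuous by Lemma \ref{L-lower semi-continuity of cost functions induced from barycenters}, approximate $c$ from below by an increasing sequence of bounded continuous functions and combine monotone convergence with the Portmanteau theorem. Finiteness of $\int c\,d\beta$ on $\Pi(\mu_1,\ldots,\mu_k)$ follows from the bound $c(y_1,\ldots,y_k)\leq \sum_i \lambda_i d^2(y_0,y_i)$ for a fixed $y_0\in Y$ together with $\mu_i\in\cP_2(Y)$. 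Hence a minimizer $\alpha$ exists.

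Next I would verify that $\nu := T_\#\alpha$ is a Wasserstein barycenter. For the upper bound, observe that $(\mathrm{pr}_i,T)_\#\alpha \in \Pi(\mu_i,\nu)$ and $T$ attains the pointwise infimum in $c$, so
\[
\sum_{i=1}^k \lambda_i W_2^2(\mu_i,\nu)\leq \sum_{i=1}^k \lambda_i \int d^2(y_i,T(\boldsymbol{y}))\,d\alpha(\boldsymbol{y}) = \int c\,d\alpha.
\]
For the matching lower bound, given an arbitrary $\rho\in \cP_2(Y)$ I would pick optimal plans $\pi_i\in \Pi(\mu_i,\rho)$ for $W_2^2(\mu_i,\rho)$ and, by iterated gluing on the Polish space $Y$, construct $\pi\in \cP(Y^{k+1})$ with marginals $\mu_1,\ldots,\mu_k,\rho$ whose $(i,k+1)$-projection equals $\pi_i$. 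Letting $\alpha'\in \Pi(\mu_1,\ldots,\mu_k)$ denote its projection onto the first $k$ coordinates,
\[
\sum_{i=1}^k \lambda_i W_2^2(\mu_i,\rho) = \int \sum_{i=1}^k \lambda_i d^2(y_i,y)\,d\pi(\boldsymbol{y},y) \geq \int c(\boldsymbol{y})\,d\alpha'(\boldsymbol{y}) \geq \int c\,d\alpha,
\]
by minimality of $\alpha$. Taking $\rho=\nu$ forces equality throughout, proving that $\nu$ is a barycenter.

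For the uniqueness claim, when $T$ is unique the chain of inequalities above must collapse to equalities for any other Wasserstein barycenter $\rho$: equality in the first inequality forces the conditional of $\pi$ given $\boldsymbol{y}$ to concentrate on $\{T(\boldsymbol{y})\}$, so $\rho=T_\#\alpha'$, while equality in the second makes $\alpha'$ itself a multimarginal minimizer; identifying all such push-forwards completes the argument. The main technical obstacles I anticipate are the weak lower-semicontinuity of $\beta\mapsto\int c\,d\beta$ for a merely lower-semicontinuous cost $c$, and the iterated gluing step in the Polish (not necessarily locally compact) setting; both are standard but must be executed with care since the usual $C_c$-type approximation tools are unavailable and one must work with Borel disintegrations throughout.
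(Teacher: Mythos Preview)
Your proposal is correct and follows exactly the approach the paper has in mind: the paper's own proof is a one-line reference to \cite[Theorem 8]{MR3663634} (with Lemmas \ref{L-lower semi-continuity of cost functions induced from barycenters} and \ref{L-Borel barycenter map} replacing the corresponding ingredients there), and the direct-method/gluing/push-forward argument you outline is precisely the content of that theorem.
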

	\begin{thm}
		Let $X$ be a Polish space having property (BC).	Let $ \mu_1,\mu_2,\ldots,\mu_p \in\cM(X)$ and let $ \lambda_1,\lambda_2,\ldots,\lambda_p $ be positive real numbers satisfying $ \sum_{i=1}^p\lambda_i=1 $. Then problem $(\cP)$ has solutions.
	\end{thm}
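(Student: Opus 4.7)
The plan is to implement the cone-lifting strategy outlined in the introduction: push a minimizing sequence up to the Euclidean cone via Lemma \ref{L-lifting gluing measure}, extract a weakly convergent subsequence, apply the pointwise barycenter selection map on $\fC$ provided by property $(BC)$, and push the result back to $X$. First I would bound the masses along a minimizing sequence $\{\mu^n\}$: since $HK(\mu,0)^2=\mu(X)$ and $HK$ is a distance, $\sqrt{\mu^n(X)}\leq HK(\mu^n,\mu_i)+\sqrt{\mu_i(X)}$, so $\mu^n(X)\to\infty$ would force $J(\mu^n)\to\infty$, contradicting minimality. With masses uniformly bounded, for each $n$ I apply Lemma \ref{L-lifting gluing measure} to $(\mu^n,\mu_1,\ldots,\mu_p)$ to obtain $\boldsymbol{\beta}^n\in\cP_2(\fC^{p+1})$ concentrated on $\{\sup_j\mathbf{r}_j\leq\Xi\}$ for a uniform $\Xi$, satisfying $\mathfrak{h}_0^2\boldsymbol{\beta}^n=\mu^n$, $\mathfrak{h}_i^2\boldsymbol{\beta}^n=\mu_i$ for $i=1,\dots,p$, and $HK^2(\mu^n,\mu_i)=\int d_\fC^2(\eta_0,\eta_i)\,d\boldsymbol{\beta}^n$.

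Next I would extract a weakly convergent subsequence $\boldsymbol{\beta}^n\to\boldsymbol{\beta}^\infty$; the tightness step is the main obstacle and is treated below. Granted the limit, $\mathfrak{h}_i^2\boldsymbol{\beta}^\infty=\mu_i$ still holds for $i\geq 1$, since for any $g\in C_b(X)$ the test function $[x,r]\mapsto r^2g(x)$ extends continuously to $\fC$ by vanishing at $\mathfrak{o}$ and is bounded on the closed support set $\{\mathbf{r}_j\leq\Xi\}^{p+1}$, so narrow convergence carries the homogeneous marginal identities through. Property $(BC)$ now enters: since $\fC$ has property $(B)$, Lemma \ref{L-Borel barycenter map} provides a Borel selection map $T\colon\fC^p\to\fC$ sending $(\eta_1,\ldots,\eta_p)$ to a pointwise minimizer of $\eta\mapsto\sum_{i=1}^p\lambda_i d_\fC^2(\eta,\eta_i)$. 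Letting $\boldsymbol{\beta}^\infty_{\geq 1}$ be the projection of $\boldsymbol{\beta}^\infty$ onto the last $p$ coordinates, I set $\alpha:=T_\#\boldsymbol{\beta}^\infty_{\geq 1}$ and take the candidate barycenter to be $\mu^\infty:=\mathfrak{h}^2\alpha$.

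To verify optimality, I consider the transport plan obtained by pushing $\boldsymbol{\beta}^\infty_{\geq 1}$ forward under $\boldsymbol{\eta}\mapsto(T(\boldsymbol{\eta}),\eta_i)$; its homogeneous marginals are $\mu^\infty$ and $\mu_i$, so by the definition of $HK$,
\[HK^2(\mu^\infty,\mu_i)\leq\int d_\fC^2(T(\boldsymbol{\eta}),\eta_i)\,d\boldsymbol{\beta}^\infty_{\geq 1}.\]
Summing with weights and using the defining property of $T$ yields $J(\mu^\infty)\leq\int c(\boldsymbol{\eta})\,d\boldsymbol{\beta}^\infty_{\geq 1}$, where $c$ is the lower semicontinuous cost of Lemma \ref{L-lower semi-continuity of cost functions induced from barycenters}. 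The Portmanteau theorem for lower semicontinuous functions gives $\int c\,d\boldsymbol{\beta}^\infty_{\geq 1}\leq\liminf_n\int c\,d\boldsymbol{\beta}^n_{\geq 1}$, and the pointwise inequality $c(\boldsymbol{\eta}_{\geq 1})\leq\sum_i\lambda_i d_\fC^2(\eta_0,\eta_i)$ integrated against $\boldsymbol{\beta}^n$ gives $\int c\,d\boldsymbol{\beta}^n_{\geq 1}\leq J(\mu^n)\to\inf J$. Hence $J(\mu^\infty)\leq\inf J$, so $\mu^\infty$ is a barycenter.

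The main obstacle is the tightness of $\{\boldsymbol{\beta}^n\}$ in $\cP(\fC^{p+1})$. Since $\fC$ fails to be locally compact when $X$ is noncompact, the uniform radial bound $\mathbf{r}_j\leq\Xi$ alone does not yield tightness of the marginals $(\mathrm{proj}_i)_\#\boldsymbol{\beta}^n$: mass near the vertex $\mathfrak{o}$ can spread over an unbounded range in $X$ even though the homogeneous marginal $\mu_i$ is fixed and tight. The resolution is to split off the region $\{r<\delta\}$, which has $d_\fC$-diameter at most $2\delta$ around $\mathfrak{o}$ and whose contribution can be absorbed into a small neighborhood of $\mathfrak{o}$ at negligible cost, and to argue tightness on $\{r\geq\delta\}$ using that the weighted measure $\mathbf{r}_i^2(\mathrm{proj}_i)_\#\boldsymbol{\beta}^n$ pushes forward under $\mathbf{x}_i$ to the fixed Radon measure $\mu_i$ on the Polish space $X$ and is therefore uniformly tight.
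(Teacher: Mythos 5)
Your proposal is correct and follows the same overall cone-lifting strategy as the paper, but the middle of the argument is organized differently. The paper bounds $\mu^n(X)$ by testing the duality formula of theorem \ref{T-identity between HK and LET} with $f_i=\tfrac12$, $g_i=-1$ (your triangle-inequality bound via $HK^2(\mu,0)=\mu(X)$ is an equally valid substitute), then extracts, for each fixed $i$, a $W_{d_\fC}$-convergent subsequence of the individual lifts $\alpha^n_i$ of $\mu_i$, and finally invokes theorem \ref{T-existence of barycenter in Wasserstein space} to produce an optimal multicoupling of the limit measures $\alpha_i$ on the cone; this forces some bookkeeping with $\lvert W^2_{d_\fC}(\alpha^{n_k},\alpha^{n_k}_i)-W^2_{d_\fC}(\alpha^{n_k},\alpha_i)\rvert$. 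You instead keep the joint multicoupling $\boldsymbol{\beta}^n$ from lemma \ref{L-lifting gluing measure}, pass to a narrow limit, and apply the pointwise barycenter map $T$ of lemma \ref{L-Borel barycenter map} together with lower semicontinuity of the multimarginal cost $c$; this folds the content of theorem \ref{T-existence of barycenter in Wasserstein space} directly into the proof and is essentially the homogeneous multimarginal route of section 5 specialized to this existence statement. Both proofs rest on the same tightness input: your closing sketch (splitting at $\{r<\delta\}$ and using that $\mathbf{r}_i^2$ times the $i$-th marginal pushes forward to the fixed $\mu_i$) is precisely the content of the cited \cite[Lemma 7.3]{MR3763404}, though to get an actual compact set one must iterate the splitting over a sequence of radii rather than ``absorb'' the non-precompact set $\{r<\delta\}$ into a neighborhood of $\mathfrak{o}$. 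One small repair: you extract a limit of the full $(p+1)$-coupling $\boldsymbol{\beta}^n$, but your tightness argument only controls the coordinates whose homogeneous marginals are the fixed $\mu_i$; since the $0$-th coordinate lifts the varying $\mu^n$, you should project onto the last $p$ coordinates \emph{before} passing to the limit (which is all your optimality argument uses anyway).
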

	\begin{proof}
		By theorem \ref{T-identity between HK and LET}, we get \begin{align*} \sum_{i=1}^p\lambda_i{HK}^2(\mu,\mu_i)=\sup\left\{\int_{X}\sum_{i=1}^{p}\lambda_if_id\mu+\sum_{i=1}^{p}\int_{X}\lambda_ig_id\mu_i\right\},\end{align*}
		where $f_i,g_i\in LSC_s(X)$ with $\sup f_i<1$, $\sup g_i<1$, and $(1-f_i(x))(1-g_i(y))\ge\cos^2(d_{\pi/2}(x,y))$ for every $x,y\in X$. Take a sequence $ \{\mu^n\} $ minimizing $ \inf_{\mu\in\cM(X)}J(\mu) $ then consider $ f_i=\frac{1}{2},g_i=-1 $ in the dual form to get $ J(\mu^n)+\sum_{i=1}^{p}\lambda_i\mu_i(X)\ge\frac{1}{2}\mu^n(X) $. Because $ J(\mu^n) $ is bounded and $ \mu_i(X) $ is finite for all $ i $ so $ \{ \mu^n \} $ is bounded.
		
		By lemma \ref{L-lifting gluing measure}, there are $ \alpha^n,\alpha^n_i\in\cP_2(\mathfrak{C}) $ with $ \alpha^n,\alpha^n_i $ being concentrated in $ \mathfrak{C}[\boldsymbol{\Xi}^n] $ where $ \boldsymbol{\Xi}^n=\sqrt{\mu^n(X)}+\sum_{i=1}^pHK(\mu^n,\mu_i) $ such that \begin{align*} \mathfrak{h}^2\alpha^n=\mu^n,\mathfrak{h}^2\alpha^n_i=\mu_i\text{ for } i=1,\ldots,p,\\HK(\mu^n,\mu_i)=W_{d_\mathfrak{C}}(\alpha^n,\alpha^n_i)\text{ for }i=1,\ldots,p. \end{align*}
		Note that $ \sum_{i=1}^pHK(\mu^n,\mu_i)\le \sum\dfrac{1}{4\lambda_i}+J(\mu^n) $ so $ \sum_{i=1}^pHK(\mu^n,\mu_i) $ is bounded. Thus, $ \{ \boldsymbol{\Xi}^n \} $ is bounded. Let $ \boldsymbol{\Xi} $ be an upper bound of $ \{ \boldsymbol{\Xi}^n \} $, we have that $ \alpha^n_i $ is concentrated in $ \mathfrak{C}[\boldsymbol{\Xi}] $ for all $ n $.
		
	 Recall that a sequence $\{\nu_n\}\subset \cM(X)$ is weak* convergent to $\nu\in \cM(X)$ if $\lim_{n\to\infty}\int_X\varphi d\nu_n=\int_X\varphi d\nu$, for every $\varphi\in C_b(X)$. By \cite[Lemma 7.3]{MR3763404} and Prokhorov's theorem $ \{\alpha^n_i\} $ is relatively compact in weak*-topology. Then there is a subsequence $ \{ \alpha^{n_k}_i \} $ converging weakly* in $ \cP_2(X) $. Let $ \alpha_i $ be the limit of $ \{ \alpha^{n_k}_i \} $. As $ \{ \alpha^{n_k}_i \} $ is concentrated on the bounded set $ \mathfrak{C}[\boldsymbol{\Xi}] $, we get 
		$$\int d_\fC^2([x,r],[x',0])d\alpha_i^{n_k}([x,r])\longrightarrow  \int d_\fC^2([x,r],[x',0])d\alpha_i([x,r]).$$ 
		Therefore from \cite[Theorem 6.9]{MR2459454}, $ W_{d_\mathfrak{C}}(\alpha^{n_k}_i,\alpha_i) $ converges to $ 0 $.
		
		Since $ \sum_{i=1}^pHK(\mu^n,\mu_i) $ is bounded then so is $ W_{d_\mathfrak{C}}(\alpha^{n_k},\alpha^{n_k}_i) $. Using the inequalities $\left|W_{d_\mathfrak{C}}(\alpha^{n_k}_i,\alpha^{n_k})-W_{d_\mathfrak{C}}(\alpha^{n_k},\alpha_i) \right|\leq W_{d_\mathfrak{C}}(\alpha^{n_k}_i,\alpha_i)$ and $W_{d_\mathfrak{C}}(\alpha^{n_k},\alpha_i)\leq W_{d_\mathfrak{C}}(\alpha^{n_k},\alpha^{n_k}_i)+ W_{d_\mathfrak{C}}(\alpha^{n_k}_i,\alpha_i)$, we have that
		\begin{align*}
		&\left|W^2_{d_\mathfrak{C}}(\alpha^{n_k},\alpha^{n_k}_i)-W^2_{d_\mathfrak{C}}(\alpha^{n_k},\alpha_i) \right|\\
		&=\left|W_{d_\mathfrak{C}}(\alpha^{n_k},\alpha^{n_k}_i)-W_{d_\mathfrak{C}}(\alpha^{n_k},\alpha_i) \right|\left|W_{d_\mathfrak{C}}(\alpha^{n_k},\alpha^{n_k}_i)+W_{d_\mathfrak{C}}(\alpha^{n_k},\alpha_i) \right|\\
		&\le W_{d_\mathfrak{C}}(\alpha^{n_k}_i,\alpha_i)\left( 2W_{d_\mathfrak{C}}(\alpha^{n_k},\alpha^{n_k}_i)+ W_{d_\mathfrak{C}}(\alpha^{n_k}_i,\alpha_i)\right) .
		\end{align*} 
		As $X$ is a Polish space having property $(BC)$, its cone $\fC$ is also Polish and has property $(B)$. From theorem \ref{T-existence of barycenter in Wasserstein space}, we get for every $ \varepsilon>0 $ there exists $ N>0 $ such that for every $ n_k>N $:\begin{align*} \varepsilon+J(\mu^{n_k})&\ge\sum_{i=1}^{p}\lambda_i W^2_{d_\mathfrak{C}}(\alpha^{n_k},\alpha_i)\\&\ge\min_{\bar{\alpha}\in\cP_2(\mathfrak{C})}\sum_{i=1}^{p}\lambda_i W^2_{d_\mathfrak{C}}(\bar{\alpha},\alpha_i)\\&=\sum_{i=1}^{p}\lambda_i W^2_{d_\mathfrak{C}}(\alpha,\alpha_i)\mbox{ }(\alpha \text{ is a minimizing solution})\\&\ge J(\mathfrak{h}^2\alpha).
		\end{align*}
		Hence, $ \inf_{\mu\in\cM(X)}J(\mu)\ge J(\mathfrak{h}^2\alpha) $ and this leads to the existence of solutions for the minimizing problem in Hellinger-Kantorovich space.
	\end{proof}
	
Now we illustrate examples of metric spaces satisfying property $(BC)$.
	\begin{example}	
		It is clear that every compact metric $X$ space has property $(B)$. 
		
		First, we show that every separable locally compact geodesic space $(X,d)$ also has property $(B)$. As $X$ is locally compact and geodesic, it is proper, i.e. every closed ball is compact, by Hopf-Rinow theorem. Hence for every $k\in \N$, $x_1,\dots, x_k\in X$, and every $\lambda_1,\dots,\lambda_k\geq 0$ with $\sum_{i=1}^k\lambda_i=1$, $\inf_{y\in X}\sum_{i=1}^k\lambda_id^2(y,x_i)$ always attains the minimum. Furthermore, because $X$ is locally compact and separable we get that the set $\{z\in X:\sum_{i=1}^k\lambda_id^2(z,x_i)=\min_{y\in X} \sum_{i=1}^k\lambda_id^2(y,x_i)\}$ is also $\sigma$-compact. Therefore, $X$ has property $(B)$.
		
		Now we will show that every compact metric space $X$ which is geodesic has property $(BC)$. From \cite[Lemma 7.1]{MR3763404} and \cite[Proposition I.5.10]{BH} we get that $\fC$ is locally compact and geodesic. On the other hand, we also have that $\fC$ is separable as $X$ is compact. Therefore, $\fC$ has property $(B)$ and hence $X$ has property $(BC)$.  
		
		Note that when $X$ is a compact metric space, from \cite[Corollary 7.16]{MR3763404} we know that the space $(\cM(X),HK)$ is proper and hence the existence of Hellinger-Kantorovich barycenters is straightforward. 
		
		Next we will illustrate that every complete $\CAT(1)$ space has property $(BC)$. Let us review $\CAT(\kappa)$ spaces, and more details can be found in \cite{BH}. Let $(Y,d)$ be a metric space and let $x,y\in Y$. A geodesic joining $x,y$ is a map $\varphi:[0,\ell]\to Y$ such that $\varphi(0)=x,\varphi(\ell)=y$, and $d(\varphi(s),\varphi(t))=|t-s|$ for every $s,t\in [0,\ell]$, where $\ell:=d(x,y)$. We say that $Y$ is a geodesic space if for every $x,y\in Y$ there exists a geodesic joining $x$ and $y$. Given $r>0$, we call that $Y$ is $r$-geodesic if for every $x,y\in Y$ with $d(x,y)<r$ there exists a geodesic joining $x$ and $y$.
		
		Given a real number $\kappa$, we denote model spaces by $M_\kappa$ the following metric spaces:
		\begin{enumerate}
			\item $M_0$ is the Euclidean space $\R^2$;
			\item if $\kappa>0$ then $M_\kappa$ is the sphere $\mathbb{S}^2$ with the metric is obtained by multiplying its distance function with $1/\sqrt{\kappa}$;
			\item if $\kappa<0$ then $M_\kappa$ is the hyperbolic plane $\mathbb{H}$ with the metric is obtained by multiplying its distance function with $1/\sqrt{-\kappa}$.	
		\end{enumerate}	
		We put $D_\kappa:=+\infty$ if $\kappa\leq 0$ and $D_\kappa:=\pi/\sqrt{\kappa}$ if $\kappa>0$. Let $(X,d)$ be a metric space. For $x,y\in X$, if there exists a geodesic joining we write $[x,y]$ to denote a definite geodesic segment joining $x$ to $y$. A geodesic triangle $\Delta$ in $X$ consists of three points $x,y,z\in X$ and a choice of three geodesic segments $[x,y],[y,z]$ and $[z,x]$. A point $p\in \Delta$ if $p$ is in $[x,y]\cup [y,z]\cup [z,x]$. A triangle $\overline{\Delta}(\bar{x},\bar{y},\bar{z})$ in $M_\kappa$ is called a comparison triangle for $\Delta([x,y],[y,z],[z,x])$ if $d(\overline{x},\overline{y})=d(x,y)$, $d(\overline{z},\overline{y})=d(z,y)$ and $d(\overline{x},\overline{z})=d(x,z)$. If $d(x,y)+d(y,z)+d(z,x)<2D_\kappa$, such a triangle always exists \cite[Lemma I.2.14]{BH}.
		
		Let $\Delta$ be a triangle geodesic in $X$ with perimeter less than $D_\kappa$ and let $\overline{\Delta}\subset M_\kappa$ be its comparison triangle. We say that $\Delta$ satisfies the $\CAT(\kappa)$ inequality if for every $p,q\in \Delta$ and all comparison points $\bar{p},\bar{q}\in\overline{\Delta}$, one has $d(p,q)\leq d(\bar{p},\bar{q})$.   
		
		If $\kappa\leq 0$ then $X$ is called $\CAT(\kappa)$ space if it is geodesic and every its geodesic triangle satisfies the $\CAT(\kappa)$ inequality. 
		If $\kappa>0$ then we $X$ is called $\CAT(\kappa)$ space if it is $D_\kappa$-geodesic and every its geodesic triangle with perimeter less than $2D_\kappa$ satisfies the $\CAT(\kappa)$ inequality. 
		
		We say that a metric space $(Y,d)$ is a (global) NPC space if it is complete, and for every $x_1,x_2\in Y$, there exists $y\in Y$ such that for all $z\in Y$, one has 
		$$d^2(z,y)\leq \frac{1}{2}d^2(z,x_1)+\frac{1}{2}d^2(z,x_2)-\frac{1}{4}d^2(x_1,x_2).$$
		From \cite[Theorem 1.3.2]{MR3241330}, we know that complete $\CAT(0)$ spaces consist of all NPC spaces. Applying \cite[Proposition 4.3]{Sturm} we get that every NPC space has property $(B)$. And as $X$ is $\CAT(1)$ if and only if its cone $\fC$ is $\CAT(0)$ \cite[Theorem II.3.14]{BH},  and $X$ is complete if and only if $\fC$ is complete \cite[Proposition I.5.9]{BH}, we obtain that $\CAT(1)$-spaces have property $(BC)$. Examples of $\CAT(1)$-spaces can be found in \cite[Section II.1]{BH}. 
	\end{example}
		
	Here are the reasons we put the condition $(BC)$ of metric spaces to prove the existence of Hellinger-Kantorovich barycenters. 	
	\begin{remark}
		\begin{enumerate}
			\item As $\fC$ is not locally compact unless $X$ is compact, we can not apply directly results of \cite{MR3590527,MR3663634} for the existence of Wasserstein barycenters on $\fC$.
			\item Hellinger-Kantorovich spaces in general are not NPC \cite[Theorem 8.8]{MR3763404}. Therefore, the existence of Hellinger-Kantorovich barycenters is not a consequence of \cite{Sturm}. Indeed, given two measures $\mu_1,\mu_2$ in $\R^n$ as in example \ref{E-nonuniqueness of HK barycenters}, there exists $t\in (0,1)$ such that $ \inf_{\mu\in \cM(\R^n)}(1-t)HK^2(\mu,\mu_1)+tHK^2(\mu,\mu_2)$ has at least two different minimizing solutions. Therefore, $(\cM(\R^n),HK)$ is not NPC by \cite[Proposition 4.3]{Sturm}. 
			\item If $(X,d)$ is a geodesic space then so is $(\cM(X),HK)$ \cite[Proposition 8.3]{MR3763404}. However, similar to the Wasserstein setting, if $X$ is not compact, $(\cM(X),HK)$ is not locally compact, in particular not proper; and the existence of barycenters is also less known for non-proper spaces. Here is our example inspired by \cite[Example 4, Example 9]{MR3542003} and  \cite[Remark 7.1.9]{Ambrosio} for this fact.  
		\end{enumerate}
		\begin{example}
			(The space $(\cM(X), HK)$ is locally compact only if $X$ is compact).
			
			Let $(X,d)$ be a metric space. First we calculate the distance $ HK(\mu_0,\mu_1) $ where $\mu_0=a_0\delta_{x_0},\mu_1=a_1\delta_{x_0}+b_1\delta_{x_1}$, $ a_0,a_1,b_1 $ are positive numbers, and $ x_0,x_1 $ are points in $ X $. 
			From \cite[Theorem 6.3 (b)]{MR3763404} we know that \[ \dfrac{a_1\sqrt{a_0}}{\sqrt{a_1+b_1\cos^2(d_{\pi/2}(x_0,x_1))}}\delta_{(x_0,x_0)}+\dfrac{b_1\cos^2(d_{\pi/2}(x_0,x_1))\sqrt{a_0}}{\sqrt{a_1+b_1\cos^2(d_{\pi/2}(x_0,x_1))}}\delta_{(x_0,x_1)}\in\Opt_{\LET}(\mu_0,\mu_1) \] and so using \cite[Theorem 6.3 (d)]{MR3763404} we get \[ HK^2(\mu_0,\mu_1)=\LET(\mu_0,\mu_1)=a_0+a_1+b_1-2\sqrt{a_0(a_1+b_1\cos^2(d_{\pi/2}(x_0,x_1)))}. \]
			If $(\cM(X),HK)$ is locally compact then we assume that there are $ \varepsilon>0 $ and a point $ x_0 $ such that the ball $ B'(\delta_{x_0},\varepsilon):=\left\{ \mu\in\cM(X):HK(\mu,\delta_{x_0})\le\varepsilon \right\} $ is compact. Consider any sequence $ (x_n)\in X $ then we have \begin{align*} HK^2(\delta_{x_0},a\delta_{x_0}+b\delta_{x_n})=&1+a+b-2\sqrt{a+b\cos^2(d_{\pi/2}(x_0,x_n))}\\=&\left( \sqrt{a+b\cos^2(d_{\pi/2}(x_0,x_n))}-1 \right)^2+b\sin^2(d_{\pi/2}(x_0,x_n)) \end{align*} for any positive numbers $ a,b $. Choose $ a:=1+\sqrt{2}\varepsilon $ and $ b:=\dfrac{\varepsilon^2}{2} $, we see that \begin{align*} HK^2(\delta_{x_0},a\delta_{x_0}+b\delta_{x_n})=&\left( \sqrt{a+b\cos^2(d_{\pi/2}(x_0,x_n))}-1 \right)^2+b\sin^2(d_{\pi/2}(x_0,x_n))\\\le&\left( \sqrt{1+\sqrt{2}\varepsilon+\dfrac{\varepsilon^2}{2}}-1 \right)^2+\dfrac{\varepsilon^2}{2}=\varepsilon^2. \end{align*} Therefore, $ (a\delta_{x_0}+b\delta_{x_n}) $ has a convergent subsequence in $HK$ distance. Applying \cite[Theorem 7.15 ]{MR3763404} we have that $(a\delta_{x_0}+b\delta_{x_n})$ has a convergent subsequence in the narrow topology on $\cM(X)$. Hence $ (x_n) $ must have a convergent subsequence in $X$ by \cite[Proposition 5.1.8 and Corollary 5.1.9]{Ambrosio}.
		\end{example}		
		
	\end{remark}	
	\section{Uniqueness and characterizations of Hellinger-Kantorovich barycenters}	
	In this section we will introduce dual formulations of the Hellinger-Kantorovich barycenter problem $(\cP)$, and investigate the uniqueness of barycenters. 
\subsection{Dual formulations of the Hellinger-Kantorovich barycenter problem}		
	Let $(X,d)$ be a metric space. For every $f\in C_b(X)$ with $\sup f\leq 1$, we define the function $Sf$ on $X$ by
	$$	Sf(y):=\left\{\begin{array}{l}\inf_{x\in X_f}\left\{1-\dfrac{\cos^2(d_{\pi/2}(x,y))}{1-f(x)}\right\} \mbox{ if } B_y(\pi/2)\subset X_f,\\-\infty \mbox{ otherwise },\end{array}\right.$$ where $ X_f=\left\{ x\in X|f(x)<1 \right\}. $
	
Our lemmas \ref{L-variants of duality formula of HK1}, \ref{L-formula of HK in term of S} and \ref{L-formula of HK and LET in terms of $F_i$} are variants of \cite[Theorem 7.21 (i) and (ii)]{MR3763404}. As we mentioned on the introduction, we need the constraint $f_i\in F_i$ in lemma \ref{L-formula of HK and LET in terms of $F_i$} instead of $\sup f_i< 1$ as in \cite[Theorem 7.21 (ii)]{MR3763404}, which is not closed under the pointwise limit, to show that the dual problem $(\cP^\ast)$ has solutions in proposition \ref{P-existence solutions of dual problems}. 
\begin{lem}
\label{L-variants of duality formula of HK1}	
For every $f\in C_b(X)$ with $\sup f\leq 1$, the function $Sf$ is upper semi-continuous, and in particular $Sf$ is measurable. Furthermore, if $ \sup f<1 $ then $ Sf $ is bounded and continuous with $ \sup Sf<1 $. 
\end{lem}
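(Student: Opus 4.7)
The plan is to split the argument according to whether $Sf(y)$ is defined by the infimum or equals $-\infty$. Set $F(x,y) := 1 - \cos^2(d_{\pi/2}(x,y))/(1-f(x))$ for $x \in X_f$ and $y \in X$; since $f$ and $d$ are continuous and $1-f(x) > 0$ on $X_f$, $F$ is jointly continuous on $X_f \times X$. Let $U := \{y \in X : B_y(\pi/2) \not\subset X_f\}$, so that by definition $Sf \equiv -\infty$ on $U$ and $Sf(y) = \inf_{x \in X_f} F(x,y)$ on $X \setminus U$.

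The first step is to show that $U$ is open. Given $y_0 \in U$, pick any $x_0 \in B_{y_0}(\pi/2) \setminus X_f$; continuity of $f$ together with the hypothesis $\sup f \le 1$ force $f(x_0) = 1$. Since $d(x_0, \cdot)$ is continuous, there is a neighbourhood $V$ of $y_0$ with $d(x_0, y) < \pi/2$ for every $y \in V$, and then $x_0 \in B_y(\pi/2) \setminus X_f$, so $V \subset U$. In particular $Sf \equiv -\infty$ on $V$, which makes upper semi-continuity at such $y_0$ trivial.

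For upper semi-continuity at a point $y_0 \in X \setminus U$: for each fixed $x \in X_f$ the map $y \mapsto F(x,y)$ is continuous, and the inequality $Sf(y) \le F(x,y)$ holds at every $y \in X$ (with the convention $-\infty \le F(x,y)$ when $y \in U$). Passing to $\limsup_{y \to y_0}$ and then to the infimum over $x \in X_f$ gives
\[
\limsup_{y \to y_0} Sf(y) \le \inf_{x \in X_f} F(x, y_0) = Sf(y_0),
\]
so upper semi-continuity holds on all of $X$; Borel measurability is then automatic.

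Finally, assume $\sup f < 1$ and set $c := \sup f$. Then $X_f = X$ and $U = \emptyset$, so $Sf(y) = \inf_{x \in X} F(x,y)$ for every $y$. The uniform lower bound $F(x,y) \ge 1 - (1-c)^{-1}$ controls $Sf$ from below, and evaluating at $x=y$ gives $Sf(y) \le -f(y)/(1-f(y)) \le -\inf f/(1-\inf f)$, which is strictly less than $1$ because $f \in C_b(X)$ has $\inf f$ finite; this yields boundedness and $\sup Sf < 1$. For continuity, $\cos^2$ is $1$-Lipschitz on $[0,\pi/2]$ and $d_{\pi/2}(x,\cdot)$ is $1$-Lipschitz, so
\[
|F(x,y_1) - F(x,y_2)| \le \frac{d(y_1,y_2)}{1-c}
\]
uniformly in $x$; the infimum of a uniformly Lipschitz family is Lipschitz with the same constant, so $Sf$ is continuous. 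The one delicate point I expect is precisely the case split between finite and $-\infty$ values in the semi-continuity step, but the unconditional bound $Sf(y) \le F(x,y)$ for any fixed $x \in X_f$ unifies both branches cleanly.
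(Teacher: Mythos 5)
Your proof is correct and follows essentially the same route as the paper: you show the set where $Sf=-\infty$ is open, obtain upper semi-continuity by exchanging $\limsup$ and $\inf$ via the pointwise bound $Sf(y)\le F(x,y)$, and prove continuity for $\sup f<1$ from a Lipschitz estimate uniform in $x$. Your uniform $1/(1-c)$-Lipschitz bound (from $|\tfrac{d}{dt}\cos^2 t|\le 1$) is in fact a slightly cleaner and sharper version of the paper's trigonometric manipulation, which yields the constant $2/(1-M)$, and your evaluation at $x=y$ makes explicit the bound $\sup Sf\le -\inf f/(1-\inf f)<1$ that the paper leaves as an easy observation.
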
	
	\begin{proof}
		Let $f\in C_b(X)$ with $ \sup f\le 1$, it's easy to see that $ \sup Sf<1 $. For $ y\in X $ if there exists $ x\in B_y(\pi/2):f(x)=1$ then for all $ y'\in B_y(\pi/2-d(x,y)):Sf(y')=-\infty $. Otherwise for $ y_n\in X, y_n\rightarrow y$ we have \begin{align*}
		\limsup_nSf(y_n)\le&\limsup_n\inf_{x\in X_f}\left\{1-\dfrac{\cos^2(d_{\pi/2}(x,y_n))}{1-f(x)}\right\}\\\le&\inf_{x\in X_f}\limsup_n\left\{1-\dfrac{\cos^2(d_{\pi/2}(x,y_n))}{1-f(x)}\right\}\\=&Sf(y).
		\end{align*}
		
		Let $f\in C_b(X)$ with $ \sup f=M<1$ then $ 1-\dfrac{\cos^2(d_{\pi/2}(x,y))}{1-f(x)}\ge 1-\dfrac{1}{1-M} $ for all $ x,y\in X $ which means $ Sf $ is bounded below. For $ y,y'\in X $ consider a sequence $ \{x_n\} $ that minimizes $\inf_{x\in X}\{ 1-\dfrac{\cos^2(d_{\pi/2}(x,y))}{1-f(x)}\} $. Then we get \begin{align*}
		& Sf(y')-Sf(y)\\&
		\le \liminf_n \dfrac{4}{1-M}
		\cos\left(\dfrac{1}{2}\left(d_{\pi/2}(x_n,y)+d_{\pi/2}(x_n,y')\right)\right)\cos\left(\dfrac{1}{2}\left(d_{\pi/2}(x_n,y)-d_{\pi/2}(x_n,y')\right)\right)\\
		&\sin\left(\dfrac{1}{2}\left(d_{\pi/2}(x_n,y)+d_{\pi/2}(x_n,y')\right)\right)\sin\left(\dfrac{1}{2}\left(d_{\pi/2}(x_n,y)-d_{\pi/2}(x_n,y')\right)\right)\\
		&\le \dfrac{2d_{\pi/2}(y,y')}{1-M}.
		\end{align*}
		Similarly, $ Sf(y)-Sf(y')\le \dfrac{2d_{\pi/2}(y,y')}{1-M}.$ Hence $ Sf $ is continuous.
		\end{proof}
	\begin{lem}
		\label{L-formula of HK in term of S}
		Let $(X,d)$ be a metric space and let $\mu_1,\mu_2\in \cM(X)$. Then 
		\[ HK^2(\mu_1,\mu_2)=\sup\left\{\int_{X}fd\mu_1+\int_{X}Sfd\mu_2,f\in C_b(X),\sup f<1\right\}. \]
	\end{lem}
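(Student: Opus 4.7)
The plan is to derive the identity directly from the $C_b(X)$-version of the duality formula in Theorem \ref{T-identity between HK and LET}, by recognizing $Sf$ as the sharp ``$c$-conjugate'' of $f$ with respect to the constraint $(1-f(x))(1-g(y))\ge \cos^2(d_{\pi/2}(x,y))$.

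First I would establish the inequality ``$\ge$''. Fix any admissible pair $(f_1,f_2)\in C_b(X)\times C_b(X)$ with $\sup f_i<1$ and $(1-f_1(x))(1-f_2(y))\ge \cos^2(d_{\pi/2}(x,y))$ for every $x,y\in X$. Since $\sup f_1<1$, one has $X_{f_1}=X$, so the admissibility constraint rearranges as $1-f_2(y)\ge \frac{\cos^2(d_{\pi/2}(x,y))}{1-f_1(x)}$ for every $x\in X$. Taking the supremum over $x$ gives $f_2(y)\le Sf_1(y)$ for every $y\in X$, hence $\int_X f_2\,d\mu_2\le \int_X Sf_1\,d\mu_2$ and so
\[
\int_X f_1\,d\mu_1+\int_X f_2\,d\mu_2\;\le\;\int_X f_1\,d\mu_1+\int_X Sf_1\,d\mu_2.
\]
Taking the supremum over admissible $(f_1,f_2)$ on the left and invoking Theorem \ref{T-identity between HK and LET} with $C_b(X)$ in place of $LSC_s(X)$ yields $HK^2(\mu_1,\mu_2)\le$ the right-hand side of the claimed identity.

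For the reverse inequality, let $f\in C_b(X)$ with $\sup f<1$. By Lemma \ref{L-variants of duality formula of HK1}, $Sf$ is bounded and continuous with $\sup Sf<1$. Directly from the definition of $Sf$ (which is a pointwise infimum on $X_f=X$), for every $x,y\in X$ one has $1-Sf(y)\ge \frac{\cos^2(d_{\pi/2}(x,y))}{1-f(x)}$, i.e., $(1-f(x))(1-Sf(y))\ge \cos^2(d_{\pi/2}(x,y))$. Thus $(f,Sf)$ is an admissible pair in the $C_b(X)$-version of the dual formulation of Theorem \ref{T-identity between HK and LET}, and we conclude $\int_X f\,d\mu_1+\int_X Sf\,d\mu_2\le HK^2(\mu_1,\mu_2)$. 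Taking the supremum over all such $f$ closes the loop.

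The argument is essentially a formal manipulation once Lemma \ref{L-variants of duality formula of HK1} is available; the only delicate point to watch is that the strict inequality $\sup f<1$ is used in two places: to ensure $X_f=X$ so that $Sf$ is finite on all of $X$ (rather than $-\infty$), and to allow unrestricted division by $1-f(x)$. The substantive content of the identity is already contained in the $HK^2=\LET$ duality of Theorem \ref{T-identity between HK and LET}; the role of this lemma is simply to rewrite the sup over admissible pairs as a sup over a single function by replacing $f_2$ by the largest admissible choice $Sf_1$.
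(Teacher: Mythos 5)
Your proof is correct, and the first half (showing $g\le Sf$ for any admissible pair, hence $HK^2\le\sup$) is exactly the paper's argument. For the reverse inequality, however, you take a genuinely different route: you observe that by Lemma \ref{L-variants of duality formula of HK1} the pair $(f,Sf)$ is itself admissible in the $C_b(X)$-version of the dual formulation of Theorem \ref{T-identity between HK and LET} (since $\sup Sf<1$ and $(1-f(x))(1-Sf(y))\ge\cos^2(d_{\pi/2}(x,y))$ by construction), and you simply reuse the ``$\le$'' half of that duality. The paper instead argues on the primal side: it takes a calibrated plan $\boldsymbol{\alpha}\in\cM_2(\fC^2)$ with $\mathfrak{h}_i^2\boldsymbol{\alpha}=\mu_i$ and $HK^2(\mu_1,\mu_2)=\int d_{\pi/2,\fC}^2\,d\boldsymbol{\alpha}$ from \cite[Lemma 7.19]{MR3763404}, rewrites $\int f\,d\mu_1+\int Sf\,d\mu_2$ as a cone integral of $f(x_1)r_1^2+Sf(x_2)r_2^2$, and bounds it by $\int(r_1^2+r_2^2-2r_1r_2\cos(d_{\pi/2}(x_1,x_2)))\,d\boldsymbol{\alpha}$ via the AM--GM inequality $(1-f)r_1^2+(1-Sf)r_2^2\ge 2r_1r_2\sqrt{(1-f)(1-Sf)}$. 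Your version is shorter and purely formal, but the paper's primal estimate is deliberately chosen because it survives the weakening of the constraint from $\sup f<1$ to $f\in F_i$ in Lemma \ref{L-formula of HK and LET in terms of $F_i$}, where $f$ may attain the value $1$ and $Sf$ may take the value $-\infty$, so $(f,Sf)$ is no longer an admissible dual pair and your argument would not carry over verbatim. For the lemma as stated, though, your proof is complete and valid.
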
	
	\begin{proof}
		Given $f,g\in C_b(X)$ with $\sup f<1, \sup g<1$ and $ (1-f(x))(1-g(y))\ge\cos^2(d_{\pi/2}(x,y)) $ for every $x,y\in X$, we have $ g(y)\le 1-\dfrac{\cos^2(d_{\pi/2}(x,y))}{1-f(x)} $ for all $ x\in X $ which means $ g\leq Sf$. Therefore, from theorem \ref{T-identity between HK and LET}, we have 
		\[ HK^2(\mu_1,\mu_2)\leq\sup\left\{\int_{X}fd\mu_1+\int_{X}Sfd\mu_2,f\in C_b(X),\sup f<1\right\}. \]
		
		Now we prove that $ HK^2(\mu_1,\mu_2)\ge\int_{X}fd\mu_1+\int_{X}Sfd\mu_2 $ for every $f\in C_b(X)$ with $\sup f<1$. By \cite[Lemma 7.19]{MR3763404} there exists $ \boldsymbol\alpha\in\cM_2(\mathfrak{C}^2) $ such that $ \mathfrak{h}_1^2\boldsymbol{\alpha}=\mu_1,\mathfrak{h}_2^2\boldsymbol{\alpha}=\mu_2 $ and $ HK^2(\mu_1,\mu_2)=\int d_{\pi/2,\mathfrak{C}}^2d\boldsymbol{\alpha} $. Then we have
		\begin{align*}
		\int_{X}fd\mu_1+\int_{X}Sfd\mu_2=&\int_{\mathfrak{C}^2}\left( f(x_1)r_1^2+Sf(x_2)r_2^2 \right)d\boldsymbol\alpha\\
		=&\int_{\mathfrak{C}^2}\left( r_1^2+r_2^2-(1-f(x_1))r_1^2-(1-Sf(x_2))r_2^2 \right)d\boldsymbol\alpha\\
		\le&\int_{\mathfrak{C}^2}\left( r_1^2+r_2^2-2r_1r_2\cos(d_{\pi/2}(x_1,x_2)) \right)d\boldsymbol\alpha\\
		=&HK^2(\mu_1,\mu_2).
		\end{align*}
	\end{proof}
\begin{definition}
\label{D-$F_i$}	
	Given $p\geq 1$, $\mu_1,\dots, \mu_p\in \cM(X)$ and weights $\lambda_1,\dots, \lambda_p$, we define $F_i$ for every $i=1,\dots, p$ as the set of all functions $ f $ satisfying the following conditions:
	\begin{enumerate}
		\item $ f\in C_b(X) $ and $ f(x)\le 1,\forall x\in X $,
		\item If $ f(x)=1 $ then $ \mu_i(B'_x(\pi/8))=0 $, where $B_x'(s):=\{y\in X:d(x,y)\leq s\}$ for every $s>0$.
	\end{enumerate}	
\end{definition}	
	As $F_i$ contains all $f\in C_b(X)$ with $\sup f<1$, from lemma \ref{L-formula of HK in term of S} we get that $$HK^2(\mu,\mu_i)\leq \sup\left\{\int_{X}f_id\mu+\int_{X}Sf_id\mu_i,f_i\in F_i\right\}. $$
	And the proof in lemma \ref{L-formula of HK in term of S} of $HK^2(\mu,\mu_i)\geq \int_{X}f_id\mu+\int_{X}Sf_id\mu_i $ still holds for all $f_i\in F_i$. Therefore, we get the following lemma.
	\begin{lem}
		\label{L-formula of HK and LET in terms of $F_i$}
		Let $X$ be a metric space. Then for every $\mu\in \cM(X)$ we have \[ HK^2(\mu,\mu_i)=\sup\left\{\int_{X}f_id\mu+\int_{X}Sf_id\mu_i,f_i\in F_i\right\}. \]
	\end{lem}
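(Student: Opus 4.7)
I would split the lemma into two inequalities. The bound $HK^2(\mu,\mu_i)\le\sup\{\int_X f_id\mu+\int_X Sf_id\mu_i:f_i\in F_i\}$ is immediate from Lemma \ref{L-formula of HK in term of S} together with the inclusion $\{f\in C_b(X):\sup f<1\}\subset F_i$ (the second defining condition of $F_i$ being vacuous when $f$ never attains $1$), so the supremum over the larger class $F_i$ dominates the expression there, which already equals $HK^2(\mu,\mu_i)$.

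For the reverse inequality, I would transplant the argument of Lemma \ref{L-formula of HK in term of S}. Given $f_i\in F_i$, I may assume $\int Sf_id\mu_i>-\infty$, since otherwise the target inequality is trivial. Using \cite[Lemma 7.19]{MR3763404}, take an optimal $\boldsymbol{\alpha}\in\cM_2(\mathfrak{C}^2)$ with $\mathfrak{h}_1^2\boldsymbol{\alpha}=\mu$, $\mathfrak{h}_2^2\boldsymbol{\alpha}=\mu_i$ and $HK^2(\mu,\mu_i)=\int d_{\pi/2,\mathfrak{C}}^2\,d\boldsymbol{\alpha}$. Rewriting $\int f_id\mu+\int Sf_id\mu_i=\int(f_i(x_1)r_1^2+Sf_i(x_2)r_2^2)d\boldsymbol{\alpha}$ and invoking the defining inequality $(1-f_i(x_1))(1-Sf_i(x_2))\ge\cos^2(d_{\pi/2}(x_1,x_2))$ combined with AM-GM gives the pointwise bound $f_i(x_1)r_1^2+Sf_i(x_2)r_2^2\le r_1^2+r_2^2-2r_1r_2\cos(d_{\pi/2}(x_1,x_2))$; integrating against $\boldsymbol{\alpha}$ then yields precisely $HK^2(\mu,\mu_i)$.

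The main obstacle is that for $f_i\in F_i$ one may have $f_i=1$ on some set and correspondingly $Sf_i=-\infty$ on the $(\pi/2)$-neighborhood thereof, so the AM-GM step formally involves $0\cdot\infty$. Under the reduction $\int Sf_id\mu_i>-\infty$, however, $\mu_i(\{Sf_i=-\infty\})=0$, and the identity $\mathfrak{h}_2^2\boldsymbol{\alpha}=\mu_i$ forces $\boldsymbol{\alpha}$-almost every $(\eta_1,\eta_2)$ with $Sf_i(x_2)=-\infty$ to have $r_2=0$, so $Sf_i(x_2)r_2^2=0$ under the convention $0\cdot\infty=0$ and the pointwise bound survives on this null locus. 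A cleaner alternative, which I would likely adopt in a clean write-up, is to approximate $f_i$ by $f_i^{(n)}:=f_i-1/n\in C_b(X)$, noting $\sup f_i^{(n)}<1$ so Lemma \ref{L-formula of HK in term of S} applies; one verifies that $Sf_i^{(n)}\downarrow Sf_i$ pointwise (monotonicity from $1-f_i^{(n)}\downarrow 1-f_i$, and the limit identified on $\{B_y(\pi/2)\subset X_{f_i}\}$ by a standard interchange of $\lim$ and $\inf$ for monotone decreasing nets, while on its complement plugging a point of $\{f_i=1\}\cap B_y(\pi/2)$ into the infimand drives $Sf_i^{(n)}(y)\to-\infty$), and the monotone convergence theorem (applied to the decreasing sequence bounded above by $1$ against the finite measure $\mu_i$) then transfers the inequality to the limit, possibly with $\int Sf_id\mu_i=-\infty$, in which case the claim is vacuous.
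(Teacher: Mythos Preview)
Your proof is correct and follows essentially the same route as the paper. The paper's argument is in fact terser: it simply notes the inclusion $\{f\in C_b(X):\sup f<1\}\subset F_i$ to get one inequality from Lemma~\ref{L-formula of HK in term of S}, and then asserts that the cone-lift computation from that lemma ``still holds for all $f_i\in F_i$'' without explicitly treating the locus where $Sf_i=-\infty$. Your handling of that edge case---either by reducing to $\int Sf_i\,d\mu_i>-\infty$ and observing that $\mathfrak{h}_2^2\boldsymbol{\alpha}=\mu_i$ forces $r_2=0$ $\boldsymbol{\alpha}$-a.e.\ on $\{Sf_i(\mathbf{x}_2)=-\infty\}$, or by the monotone approximation $f_i-1/n$---fills in a detail the paper leaves implicit.
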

	
	And hence $	\lambda_iHK^2(\mu,\mu_i)=\sup\left\{\int_{X}\lambda_if_id\mu+\int_{X}\lambda_iSf_id\mu_i,f_i\in F_i\right\},$ for every $\mu\in \cM(X)$.
	
	Now we consider a dual formulation of the Hellinger-Kantorovich barycenter problem $$ (\cP^*) \mbox{  } \sup\left\{F(f_1,\ldots,f_p)=\sum_{i=1}^{p}\int_X\lambda_iSf_id\mu_i:f_i\in F_i,\sum_{i=1}^{p}\lambda_if_i=0\right\}. $$	
	
	For every $i=1,\cdots, p$ and every $f\in C_b(X)$ with $\sup f<\lambda_i$ we define the function $S_{\lambda_i}f$ on $X$ by
	$$ S_{\lambda_i}f(y):=\inf_{x\in X}\left\{\lambda_i-\dfrac{\lambda_i^2\cos^2(d_{\pi/2}(x,y))}{\lambda_i-f_i(x)}\right\}  \mbox{ for every } y\in X.$$
	Then for every $ f\in C_b(X)$ with $\sup f<1 $ and every $i=1,\cdots, p$ we have $ S_{\lambda_i}(\lambda_if_i)=\lambda_iSf_i $.
	
	To prove $ \inf(\cP)=\sup(\cP^\ast) $ we investigate another maximum problem 
	\[ (\cP^*_0) \mbox{  }\sup\left\{F(f_1,\ldots,f_p)=\sum_{i=1}^{p}\int_XS_{\lambda_i}f_id\mu_i:f_i\in C_0(X),\sup f_i<\lambda_i,\sum_{i=1}^{p}f_i=0\right\}. \]
	For $ f\in C_0(X) $ we define \[ H_i(f):=\left\{\begin{array}{l}-\int_XS_{\lambda_i}fd\mu_i\text{ if }\sup f<\lambda_i\\+\infty\text{ otherwise}.\end{array}\right. \] Then we can define the Legendre-Fenchel transform of $H_i$ with respect to the dual pairing $ \left< f,\mu \right>:=\int_X fd\mu$ for every $f\in C_0(X),\mu\in \cM_s(X)$ as follows
	\begin{align*}
	H_i^\ast(\mu):&=\sup_{f\in C_0(X)}\left\{ \int_{X}fd\mu-H_i(f)\right\}\\
	&=\sup_{f\in C_0(X),\sup f<\lambda_i}\left\{ \int_{X}fd\mu+\int_{X}S_{\lambda_i}fd\mu_i \right\}, 
	\end{align*}	
	where $\cM_s(X)$ is the set of all Radon (not necessarily nonnegative) measures on $X$. 
	\begin{lem}
		\label{L-HK in terms of Legendre transforms}
		For every Radon measure with finite total variation $ \mu $ we have
		\[ H_i^\ast(\mu)=\left\{\begin{array}{l}
		\lambda_iHK^2(\mu,\mu_i)\text{ if } \mu\in\cM(X) \\+\infty\text{ otherwise. }
		\end{array} \right. \]
	\end{lem}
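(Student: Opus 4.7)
The plan is to split into two cases: $\mu\in\cM(X)$ (nonnegative) and $\mu\in\cM_s(X)\setminus\cM(X)$. Substituting $f=\lambda_i g$ and using the identity $S_{\lambda_i}(\lambda_i g)=\lambda_i Sg$ recorded above, the definition of $H_i^*$ rewrites as
$$ H_i^*(\mu)=\lambda_i \sup_{g\in C_0(X),\,\sup g<1}\Big\{\int_X g\,d\mu+\int_X Sg\,d\mu_i\Big\},$$
so the task reduces to showing this sup equals $HK^2(\mu,\mu_i)$ when $\mu\ge 0$ and $+\infty$ otherwise. In case (i), the upper bound $H_i^*(\mu)\le \lambda_i HK^2(\mu,\mu_i)$ is immediate from the second half of the proof of Lemma \ref{L-formula of HK in term of S}: that argument applies verbatim to any $g\in C_0(X)\subset C_b(X)$ with $\sup g<1$.

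For the matching lower bound in case (i), I would invoke Lemma \ref{L-formula of HK in term of S} and approximate an arbitrary $g\in C_b(X)$ with $\sup g<1$ by $g_n:=\psi_n g$, where $\psi_n\in C_c(X)$ is a radial cutoff with $\psi_n\equiv 1$ on $B_{x_0}(n)$, $\supp\psi_n\subset B_{x_0}(2n)$, and $0\le\psi_n\le 1$. Then $g_n\in C_0(X)$, $\sup g_n\le\max(\sup g,0)<1$, and $\int g_n\,d\mu\to\int g\,d\mu$ by dominated convergence. The key observation is that $Sg(y)$ depends on $g$ only through $g|_{B_y(\pi/2)}$: for $x$ with $d(x,y)\ge\pi/2$ the expression inside the infimum equals $1$ and cannot attain the infimum. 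Hence $Sg_n(y)=Sg(y)$ for every $y\in B_{x_0}(n-\pi/2)$. Combining this identity on the large ball with the uniform bound $|Sg_n|\le(1-\sup g)^{-1}$ and the tightness $\mu_i(X\setminus B_{x_0}(n-\pi/2))\to 0$, I obtain $\int Sg_n\,d\mu_i\to\int Sg\,d\mu_i$. Therefore the $C_0$-sup dominates $\int g\,d\mu+\int Sg\,d\mu_i$ for every admissible $g\in C_b(X)$, and taking sup over such $g$ yields the lower bound via Lemma \ref{L-formula of HK in term of S}.

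For case (ii), write $\mu=\mu_+-\mu_-$ with $\mu_-\ne 0$. Using mutual singularity together with inner/outer regularity of Radon measures, choose a compact $K$ and an open $U\supset K$ with $\mu_-(K)>0$ and $\mu_+(U)<\mu_-(K)/2$. Urysohn's lemma furnishes $\varphi\in C_c(X)$ with $\varphi\equiv 1$ on $K$, $\supp\varphi\subset U$, $0\le\varphi\le 1$; set $f_N:=-N\varphi\in C_0(X)$, so $\sup f_N=0<\lambda_i$. Then $\int f_N\,d\mu\ge N(\mu_-(K)-\mu_+(U))\ge N\mu_-(K)/2\to+\infty$. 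Since $f_N\le 0$ forces $\lambda_i-f_N(x)\ge\lambda_i$ for every $x$, each summand in the infimum defining $S_{\lambda_i}f_N$ satisfies $\lambda_i-\lambda_i^2\cos^2(d_{\pi/2}(x,y))/(\lambda_i-f_N(x))\ge\lambda_i\sin^2(d_{\pi/2}(x,y))\ge 0$, whence $S_{\lambda_i}f_N\ge 0$ pointwise and $\int S_{\lambda_i}f_N\,d\mu_i\ge 0$. Consequently $H_i^*(\mu)\ge\int f_N\,d\mu+\int S_{\lambda_i}f_N\,d\mu_i\to+\infty$, proving $H_i^*(\mu)=+\infty$.

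The main obstacle is the approximation step in case (i): the $C_b$ identity of Lemma \ref{L-formula of HK in term of S} must be transferred to the $C_0$ class, and the argument rests on the locality of $S$ (dependence of $Sg(y)$ only on $g|_{B_y(\pi/2)}$) together with tightness of the finite Radon measures. Case (ii) is comparatively soft once one recognises that nonpositive test functions automatically render $S_{\lambda_i}f_N\ge 0$, decoupling the divergence of $\int f_N\,d\mu$ from the $S_{\lambda_i}f_N$ integral.
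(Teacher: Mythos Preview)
Your proof is correct and follows the same strategy as the paper's: for nonnegative $\mu$ both arguments bound above via Lemma~\ref{L-formula of HK in term of S} and get the matching lower bound by approximating $C_b$ test functions by $C_0$ ones using the locality of $S$ (that $Sg(y)$ depends only on $g|_{B_y(\pi/2)}$), and for signed $\mu$ both feed scaled nonpositive test functions so that $S_{\lambda_i}f\ge 0$ while $\int f\,d\mu\to+\infty$. The only difference is cosmetic---the paper damps by the factor $e^{-d(\cdot,K'_\varepsilon)}$ built from an inner-regular compact set $K_\varepsilon\subset X$, whereas your radial cutoff $\psi_n\in C_c(X)$ with $\supp\psi_n\subset B_{x_0}(2n)$ tacitly assumes closed balls are compact (i.e.\ $X$ proper).
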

	\begin{proof}
		In case that $ \mu $ is not nonnegative, there exists $ f\in C_0(X),f\ge0 $ such that $ \int_X fd\mu<0 $. Instead of $ f $, we take $ -f $ which means there is a function $ f\in C_0(X),f\le 0 $ such that $ \int_X fd\mu>0 $. Note that $ S_{\lambda_i} $ is order-reversing so $ S_{\lambda_i}(tf)\ge0 $ for every $ t\ge0 $. Thus, we have \[ H_i^\ast(\mu)\ge\sup_{t\ge0}t\int_Xfd\mu=+\infty. \]
		
		Now, let's consider $ \mu\in\cM(X) $. It's clear that $ \lambda_iHK^2(\mu,\mu_i)\ge H_i^\ast(\mu) $, suppose $ \lambda_iHK^2(\mu,\mu_i)> H_i^\ast(\mu) $ then there exists $ f_0\in C_b(X),\sup f_0<1 $ such that \[ \int_Xf_0d\mu+\int_XSf_0d\mu_i>\sup_{f\in C_0(X),\sup f<1}\left\{ \int_{X}fd\mu+\int_{X}Sfd\mu_i \right\}. \]
		By the definition of Radon measures, for $ \varepsilon>0 $ there exists a compact subset $ K_\varepsilon $ of $ X $ such that $ \mu(X\setminus K_\varepsilon),\mu_i(X\setminus K_\varepsilon)<\varepsilon. $ Recall that $ d(x,A):=\inf_{y\in A}d(x,y) $ for $ x\in X $ and $ A $ is a subset of $ X. $ Define the subset $ K'_\varepsilon:=\{x\in X:d(x,K_\varepsilon)\le\pi/2 \} $. Define \[ f_\varepsilon(x)=\dfrac{f_0(x)}{e^{d(x,K'_\varepsilon)}} \]
		then we have that $ f_0 $ is equal to $ f_\varepsilon $ on $ K'_\varepsilon $ and $ Sf_0 $ is equal to $ Sf_\varepsilon $ on $ K_\varepsilon $. Since $ f_0 $ is bounded and $ \sup f_0<1 $ there exist $ N<0,M>0$ such that $N\le f_0(x)\le M<1,\forall x\in X $ and hence $ N,M:N\le f_\varepsilon(x)\le M<1,$ for every $x\in X$, and every $\varepsilon>0 $. Since $ S $ is order-reversing, $ 1-\dfrac{1}{1-M}\le Sf_\varepsilon(y)\le 1-\dfrac{1}{1-N}, \forall y\in X,\varepsilon>0 $. As a consequence,
		\begin{align*} \left| \int_X(f_0-f_\varepsilon) d\mu+\int_X(Sf_0-Sf_\varepsilon) d\mu_i \right|\le&\int_X\left|f_0-f_\varepsilon\right| d\mu+\int_X\left|Sf_0-Sf_\varepsilon\right| d\mu_i\\\le&\varepsilon\left( M-N+\dfrac{1}{1-M}-\dfrac{1}{1-N} \right),\forall\varepsilon>0.
		\end{align*}
		We get a contradiction so $ \lambda_iHK^2(\mu,\mu_i)= H_i^\ast(\mu). $
	\end{proof}
	\begin{proposition}
	\label{P-dual formulations of barycenters}
	Let $(X,d)$ be a metric space. Then we have
		\[ \inf(\cP)=\sup(\cP^\ast)=\sup(\cP^\ast_0). \]
	\end{proposition}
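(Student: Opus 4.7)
I would prove the chain of inequalities
\[\sup(\cP^\ast_0)\le\sup(\cP^\ast)\le\inf(\cP)\le\sup(\cP^\ast_0),\]
forcing equality throughout. The first two inequalities are essentially formal. For $\sup(\cP^\ast)\le\inf(\cP)$, fix any $\mu\in\cM(X)$ and any $(f_i)$ admissible for $(\cP^\ast)$; lemma \ref{L-formula of HK and LET in terms of $F_i$} gives $\lambda_i HK^2(\mu,\mu_i)\ge\int\lambda_i f_i\,d\mu+\int\lambda_i Sf_i\,d\mu_i$, and summing on $i$ together with $\sum\lambda_i f_i=0$ yields $J(\mu)\ge F(f_1,\dots,f_p)$. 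For $\sup(\cP^\ast_0)\le\sup(\cP^\ast)$, from admissible $(f_i)$ in $(\cP^\ast_0)$ the scaled tuple $g_i:=f_i/\lambda_i$ lies in $C_0(X)\subset C_b(X)$, satisfies $\sup g_i<1$ and $\sum\lambda_i g_i=0$, and belongs to $F_i$ because condition (2) of definition \ref{D-$F_i$} is vacuous when $\sup g_i<1$; the identity $S_{\lambda_i}(\lambda_i g_i)=\lambda_i Sg_i$ then shows that the two objectives coincide.

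\textbf{Strong duality via Fenchel-Rockafellar.} The substantive inequality is $\inf(\cP)\le\sup(\cP^\ast_0)$, which I would obtain from Fenchel-Rockafellar duality (in the form of Brezis's theorem) on the Banach space $E:=C_0(X)^p$. Define $\Phi:E\to(-\infty,+\infty]$ by $\Phi(f_1,\dots,f_p):=\sum_{i=1}^pH_i(f_i)$ and let $\Psi:E\to(-\infty,+\infty]$ be the indicator of the closed subspace $K:=\{(f_i)\in E:\sum_i f_i=0\}$. Then $\inf_E(\Phi+\Psi)=-\sup(\cP^\ast_0)$. Each $H_i$ is convex (because for fixed $y$, $S_{\lambda_i}f(y)$ is the infimum over $x$ of the functions $f\mapsto\lambda_i-\lambda_i^2\cos^2(d_{\pi/2}(x,y))/(\lambda_i-f(x))$, each concave in $f$, hence their infimum is concave in $f$), proper with $H_i(0)=0$, lower semicontinuous (blow-up on the boundary $\sup f=\lambda_i$), and continuous at $0$: the direct estimate $|S_{\lambda_i}f(y)|\le\lambda_i\|f\|_\infty/(\lambda_i-\|f\|_\infty)$ for $\|f\|_\infty<\lambda_i$ forces $H_i(f_n)\to 0$ whenever $f_n\to 0$ in $C_0(X)$. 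Hence $\Phi$ is continuous at $\mathbf{0}\in\mathrm{dom}\,\Psi$, so Brezis's theorem applies and produces
\[\inf_E(\Phi+\Psi)=\max_{\mathbf{f}^\ast\in E^\ast}\bigl\{-\Phi^\ast(-\mathbf{f}^\ast)-\Psi^\ast(\mathbf{f}^\ast)\bigr\}.\]

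\textbf{Conclusion and main obstacle.} Since $\Psi=\chi_K$ one has $\Psi^\ast=\chi_{K^\perp}$, and testing against $\mathbf{f}=(g,-g,0,\dots,0)$ etc.\ identifies $K^\perp=\{(\mu,\dots,\mu):\mu\in\cM_s(X)\}$, so only diagonal dual variables contribute. On such diagonals $\Phi^\ast(-\mu,\dots,-\mu)=\sum_iH_i^\ast(-\mu)$, which by lemma \ref{L-HK in terms of Legendre transforms} is finite precisely when $-\mu\in\cM(X)$; writing $\nu:=-\mu$ the right-hand side collapses to $\max_{\nu\in\cM(X)}\{-J(\nu)\}=-\inf(\cP)$, and comparing with the left-hand side gives $\sup(\cP^\ast_0)=\inf(\cP)$, closing the chain. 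The principal obstacle is the analytic verification of the properties of $H_i$ (convexity coming from the infimum-of-concaves structure of $S_{\lambda_i}$, continuity at $0$ from the explicit uniform estimate, and lsc on the boundary from the blow-up of $1/(\lambda_i-f)$) together with the computation of $K^\perp$; both are routine but must be carried out carefully in order to activate Brezis's qualification condition.
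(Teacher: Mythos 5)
Your proposal is correct, and it runs on the same engine as the paper's proof: convexity of each $H_i$ (from the infimum\nobreakdash-of\nobreakdash-concaves structure of $S_{\lambda_i}$), continuity of $H_i$ at $0$ as the qualification condition, and lemma \ref{L-HK in terms of Legendre transforms} to convert $\sum_iH_i^\ast$ into $J$. The difference is only in the packaging of the convex duality: the paper forms the infimal convolution $H(f)=\inf\{\sum_iH_i(f_i):\sum_if_i=f\}$ on $C_0(X)$, proves $H^\ast=\sum_iH_i^\ast$ and $H^{\ast\ast}(0)=H(0)$ via the $\Gamma$-regularization results of Ekeland--T\'emam, whereas you work on the product $C_0(X)^p$ with $\Phi=\bigoplus_iH_i$ plus the indicator of the constraint subspace $K$ and apply Fenchel--Rockafellar, identifying $K^\perp$ with the diagonal copy of $\cM_s(X)$; these are two equivalent faces of the same duality. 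Your version does buy two small things. First, the one-directional chain $\sup(\cP^\ast_0)\le\sup(\cP^\ast)\le\inf(\cP)\le\sup(\cP^\ast_0)$ never requires the containment $\sup(\cP^\ast)\le\sup(\cP^\ast_0)$, which the paper folds into ``it is clear that $\sup(\cP^\ast)=\sup(\cP^\ast_0)$'' even though it is the less obvious direction (elements of $F_i$ may touch $1$ and need not decay at infinity); you instead get $\sup(\cP^\ast)\le\inf(\cP)$ directly from lemma \ref{L-formula of HK and LET in terms of $F_i$}. Second, your explicit bound $|S_{\lambda_i}f|\le\lambda_i\|f\|_\infty/(\lambda_i-\|f\|_\infty)$ is a cleaner route to the continuity of $H_i$ at $0$ than the paper's upper-boundedness argument, and attainment of the Fenchel--Rockafellar maximum would hand you a minimizer of $(\cP)$ as a by-product. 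One caveat, which you share with the paper but should make explicit: Brezis's theorem places the dual variable in the full topological dual of $C_0(X)^p$, while your computation of $K^\perp$ and the appeal to lemma \ref{L-HK in terms of Legendre transforms} presuppose that this dual is $\cM_s(X)^p$; this identification is Riesz--Markov and needs $X$ locally compact (the paper's choice to work with the dual pair $(C_0(X),\cM_s(X))$ from the outset carries the same hidden assumption when it invokes $H^{\ast\ast}(0)=H(0)$). Without it, the dual maximum could a priori exceed $-\inf(\cP)$ and the inequality you need, $\inf(\cP)\le\sup(\cP^\ast_0)$, would not follow; so for the statement as phrased for a general metric space this point deserves a sentence of justification in either proof.
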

	\begin{proof}
		First, it is clear that $\sup(\cP^\ast)=\sup(\cP^\ast_0)$. 
		
		Second, we will prove that $\inf(\cP)\geq\sup(\cP^\ast)$. Let $\mu\in \cM(X)$, and $f_1,\dots,f_p\in C_0(X)$ with $\sup f_i<\lambda_i$ and $\sum_{i=1}^pf_i=0$. From lemma \ref{L-HK in terms of Legendre transforms}, we get 
		$$\lambda_i HK^2(\mu,\mu_i)\geq \int_X f_id\mu+\int_XS_{\lambda_i}f_id\mu_i \mbox{ for every } i=1,\dots, p.$$
		Summing over $i$ and using that $\sum_{i=1}^pf_i=0$ we have $$\sum_{i=1}^p \lambda_i HK(\mu,\mu_i)\geq \sum_{i=1}^p \int_XS_{\lambda_i}f_id\mu_i.$$ 
		
		Now we only need to prove that $ \inf(\cP)=\sup(\cP^\ast_0) $. We define $ H(f) $ for $ f\in C_0(X) $ as follow: \[ H(f):=\inf\left\{ \sum_{i=1}^{p}H_i(f_i),f_i\in C_0(X),\sum_{i=1}^{p}f_i=f \right\}. \]
		Considering the duality pairing $ \left< f,\mu \right>:=\int_Xfd\mu $ for every $f\in C_0(X),\mu\in \cM_s(X)$, we will prove the following properties:\begin{enumerate}
			\item $ H $ is convex and continuous on $ \left\{f\in C_0(X):\sup f<1\right\} $. Hence, $ H^{\ast\ast}(0)=H(0) $.
			\item $ H^\ast=\sum_{i=1}^{p}H_i^\ast. $
			\item $ \inf\sum_{i=1}^{p}H_i^\ast=-\left( \sum_{i=1}^{p}H_i^\ast \right)^\ast(0). $
		\end{enumerate}
		Applying these properties, we get that \[ \inf(\cP)=\inf\sum_{i=1}^{p}H_i^\ast=-\left( \sum_{i=1}^{p}H_i^\ast \right)^\ast(0)=-H^{\ast\ast}(0)=-H(0)=\sup(\cP_0^\ast). \]
		
		Now we check that those properties indeed hold. For $ f,g\in C_0(X) $ and $ \sup f,\sup g\le\lambda_i $ we have\begin{align*} tS_{\lambda_i}f(y)+(1-t)S_{\lambda_i}g(y)=& t\inf_x\left\{ \lambda_i-\dfrac{\lambda_i^2\cos^2(d_{\pi/2}(x,y))}{\lambda_i-f(x)} \right\}\\&+(1-t)\inf_x\left\{ \lambda_i-\dfrac{\lambda_i^2\cos^2(d_{\pi/2}(x,y))}{\lambda_i-g(x)} \right\}\\\le&\inf_x\left\{ \lambda_i-t\dfrac{\lambda_i^2\cos^2(d_{\pi/2}(x,y))}{\lambda_i-f(x)}-(1-t)\dfrac{\lambda_i^2\cos^2(d_{\pi/2}(x,y))}{\lambda_i-g(x)} \right\}\\\le&\inf_x\left\{ \lambda_i-\dfrac{\lambda_i^2\cos^2(d_{\pi/2}(x,y))}{\lambda_i-tf(x)-(1-t)g(x)} \right\}\\=&S_{\lambda_i}(tf+(1-f)g)
		\end{align*} so $ H_i(tf+(1-f)g)\le tH_i(f)+(1-t)H_i(g). $ If $ \sup f $ or $ \sup g $ is not less than $ \lambda_i $ then $ H(f) $ or $ H(g) $ is $ +\infty $ and we still have the inequality.
		Hence, $ H_i $ is convex and so is $ H $. For $ f\in C_0(X) $ and $ \sup f=M<1 $ take $ \varepsilon $ such that $ M+\varepsilon<1 $ then for $ g:\|g-f\|<\varepsilon $ take $ g_i=\lambda_ig $ we have \begin{align*}
		H_i(g_i)=&\int_X\sup_x\left\{ \dfrac{\lambda_i^2\cos^2(d_{\pi/2}(x,y))}{\lambda_i-g_i(x)}-\lambda_i \right\}d\mu_i\\\le&\lambda_i\mu_i(X)\left( \dfrac{1}{1-M-\varepsilon}-1 \right).
		\end{align*}
		Thus, $ H(g) $ is bounded above in $ B_f(\varepsilon) $ and is continuous by \cite[Lemma I.2.1]{MR1727362}. From \cite[Proposition I.4.1]{MR1727362} we get that $ H^{\ast\ast} $ is the $ \Gamma $-regularization of $ H $. The $ \Gamma $-regularization coincides with the lower-semicontinuous regularization (\cite[Proposition I.3.3]{MR1727362}) since $ H $ is convex and admits a continuous affine minorant. As $ H $ is continuous at 0, it also coincides with its lower-semicontinuous regularization at 0 (\cite[Corollary I.2.1]{MR1727362}). Hence $ H^{\ast\ast}(0)=H(0) $.
		
		Now we will prove the second property. Note that if $ \sup f\ge 1 $ and $ \sum_{i=1}^pf_i=f $ then there must exist $ j $ such that $ \sup f_j\ge\lambda_j $ which means there always exists $ j $ such that $ H_j(f_j)=+\infty $ so $ H(f)=+\infty $ when $ \sup f\ge 1 $. Therefore, we have that
		\begin{align*}
		&H^\ast(\mu)=\sup\left\{ \int_Xfd\mu-H(f),f\in C_0(X)\right\}\\
		=&\sup\left\{ \int_Xfd\mu-H(f),f\in C_0(X),\sup f<1 \right\}\\
		=&\sup\left\{ \int_Xfd\mu+\sup\{ -\sum_{i=1}^p H_i(f_i),f_i\in C_0(X),\sup f_i<\lambda_i,\sum_{i=1}^p f_i=f \},f\in C_0(X),\sup f<1 \right\}\\=&\sum_{i=1}^p\sup\left\{ \int_Xf_id\mu-H_i(f_i),f_i\in C_0(X),\sup f_i<\lambda_i \right\}=\sum_{i=1}^p H_i^\ast(\mu).
		\end{align*}
		
		The last one follows immediately from
		\begin{align*}
		-\left( \sum_{i=1}^{p}H_i^\ast \right)^\ast(0)=-\sup\left\{ \int_X0d\mu-\sum_{i=1}^p H_i^\ast(\mu) \right\}=\inf\sum_{i=1}^p H_i^\ast.
		\end{align*}
	\end{proof}
	
	Before proving the existence of solutions for problem $ (\cP^\ast) $ we recall several notions.
	
	The $c$-transform of a function $\varphi:X\to \R\cup\{+\infty\}$ for a cost function $c:X\times X\to [0,+\infty]$ is defined by
	$$\varphi^c(y):=\inf_{x\in X}(c(x,y)-\varphi(x)) \mbox{ for every } y\in X,$$ 
	with the convention that the subtraction is $+\infty$ whenever $c(x,y)=+\infty$ and $\varphi(x)=+\infty$.
	It is easy to check that $\varphi^{cc}\geq \varphi$. And we call that $\varphi$ is $c$-concave if $\varphi^{cc}=\varphi$.
	
	For a uniformly continuous map $f$ between metric spaces $(X_1,d_1)$ and $(X_2,d_2)$, its modulus of continuity is the function $\omega:[0,\infty)\to [0,\infty]$ defined by $$\omega(t):=\sup\{d_2(f(x),f(y)):d_1(x,y)\leq t\} \mbox{ for every } t\in [0,\infty).$$ It is clear that $\omega$ is nondecreasing, $\omega(0)=0$, $d_2(f(x),f(y))\leq \omega(d_1(x,y))$ for every $x,y\in X_1$, and $\omega$ is continuous at $0$. Furthermore if $f$ is bounded the image of $\omega$ is indeed in $[0,\infty)$.
	
	We also recall that a space is proper if every of its closed balls is compact.
	\begin{proposition}
	\label{P-existence solutions of dual problems}
		Let $(X,d)$ be a separable metric space which is proper. Then problem $ (\cP^\ast) $ has solutions.
	\end{proposition}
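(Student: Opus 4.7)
The plan is to start with a maximizing sequence $(f_1^n,\dots,f_p^n)\in\prod_{i=1}^p F_i$ with $\sum_{i=1}^p\lambda_i f_i^n=0$ and $F(f_1^n,\dots,f_p^n)\to\sup(\cP^\ast)$, and to extract a subsequential limit $(f_1,\dots,f_p)$ that is still feasible and at which $F$ is attained. Because $X$ is proper and separable, it admits an exhaustion by compact sets $K_1\subset K_2\subset\cdots$, so the natural tool is Arzel\`a-Ascoli on each $K_m$ combined with a diagonal extraction, provided one first arranges that the $f_i^n$ are uniformly bounded and equicontinuous on each $K_m$.

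The uniform upper bound is built into $F_i$, since $f_i^n\le 1$. To produce a uniform lower bound $-C\le f_i^n$, I would argue that truncating $f_i^n$ from below does not decrease $F$: making $1-f_i$ smaller can only raise $Sf_j$ for $j\ne i$ (which is what $F$ sees), and the linear constraint $\sum\lambda_if_i=0$ can be restored by subtracting a common constant that also drops harmlessly out of $F$. To upgrade a bounded maximizing sequence to an equicontinuous one, I would then replace each $f_i^n$ by its double $S$-transform $S(Sf_i^n)\ge f_i^n$; the $c$-concavity identity $\varphi^{ccc}=\varphi^c$ adapted to our $S$ yields $S(S(Sf_i^n))=Sf_i^n$, so the integrals $\int Sf_i^n\,d\mu_i$ are unchanged, while Lemma~\ref{L-variants of duality formula of HK1} furnishes a Lipschitz bound on each $K_m$ depending only on $C$ and $\diam(K_m)$. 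A final adjustment by a constant restores $\sum\lambda_if_i^n=0$. Arzel\`a-Ascoli plus a diagonal extraction then yields a subsequence $(f_1^{n_k},\dots,f_p^{n_k})$ converging uniformly on compacts to some $(f_1,\dots,f_p)\in C_b(X)^p$.

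The constraints $f_i\le 1$ and $\sum_i\lambda_if_i=0$ are closed under this convergence. Condition~(2) of Definition~\ref{D-$F_i$} also persists: the gap between the radii $\pi/8$ in $F_i$ and $\pi/2$ in the definition of $S$ leaves room so that, if $f_i(x)=1$, uniform convergence on a neighbourhood of $x$ forces $f_i^{n_k}$ to be arbitrarily close to $1$ on $B'_x(\pi/8)$, whence condition~(2) for the $f_i^{n_k}$ propagates to $\mu_i(B'_x(\pi/8))=0$ at the limit. Finally, $F(f_1,\dots,f_p)\ge\limsup_k F(f_1^{n_k},\dots,f_p^{n_k})$ follows from the upper semi-continuity of $f\mapsto Sf$ under pointwise convergence (it is an infimum of functions continuous in $f$), together with the uniform upper bound $Sf_i^{n_k}\le 1$ and finiteness of $\mu_i(X)$, via reverse Fatou on each $\mu_i$. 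The chief obstacle is the regularization step: producing a uniformly equicontinuous maximizing sequence without destroying either the linear constraint $\sum\lambda_if_i=0$ or the defining conditions of the $F_i$; once that is achieved, the passage to the limit and the upper semi-continuity argument are a fairly standard compactness-and-semicontinuity package.
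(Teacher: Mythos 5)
Your overall architecture (maximizing sequence, double $S$-transform to gain equicontinuity, Arzel\`a--Ascoli with a diagonal extraction, closedness of the constraints, upper semicontinuity of $S$ plus reverse Fatou) is the same as the paper's, but two of your steps do not work as described. First, the constraint $\sum_i\lambda_if_i=0$ cannot be restored ``by a constant'' after replacing each $f_i^n$ by $S(Sf_i^n)$: since $S(Sf_i^n)\ge f_i^n$, the deficit $\sum_i\lambda_iS(Sf_i^n)$ is a nonnegative \emph{function} of $x$, not a constant, so no constant shift brings the sum back to $0$; moreover $S$ does not commute with adding constants, so such a shift would change the values $\int Sf_i\,d\mu_i$ uncontrollably. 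The paper's resolution is structural: regularize only $f_1^n,\dots,f_{p-1}^n$ (equivalently, $c$-concavify $\varphi_i^n=-\log(1-f_i^n)$) and \emph{define} $f_p^n$ from the constraint; since the first $p-1$ functions only increase, the new $f_p^n$ decreases, and order-reversal of $S$ shows the value of $F$ does not decrease. (Your truncation argument for the lower bound also has the monotonicity backwards --- raising $f_i$ \emph{lowers} $Sf_i$ --- but no truncation is needed: $f_j\ge 1-1/\lambda_j$ follows at once from $\sum_i\lambda_if_i=0$ and $f_i\le1$.)

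Second, and more seriously, your verification that the limit lies in $F_i$ is circular. Condition (2) of definition \ref{D-$F_i$} is vacuous for the approximants $f_i^{n_k}$, since they satisfy $\sup f_i^{n_k}<1$, so there is nothing to ``propagate'' to the limit. The actual content here --- and the heart of the paper's proof --- is a quantitative estimate extracted from the maximizing property of the sequence: with $C$ a lower bound for $F(f_1^n,\dots,f_p^n)$, one obtains for every ball $K$ of radius $\pi/8$ and every $x\in K$ the bound
\begin{equation*}
\sum_{i=1}^p\frac{\lambda_i\,\mu_i(K)}{1-f_i^n(x)}\;\le\;2\Big(\sum_{i=1}^p\lambda_i\mu_i(X)-C\Big),
\end{equation*}
which forces $f_i^n$ to stay uniformly bounded away from $1$ on any such ball charged by $\mu_i$; only then can one conclude that $f_i(x)=1$ at the limit implies $\mu_i(B_x'(\pi/8))=0$. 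This same estimate is also what gives the local upper bound needed for your equicontinuity step, since the Lipschitz constant in lemma \ref{L-variants of duality formula of HK1} blows up as $\sup f\to1$. Without it the limit need not belong to $F_i$, lemma \ref{L-formula of HK and LET in terms of $F_i$} cannot be invoked, and the proof is incomplete.
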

	\begin{proof}
		Because the supremum of problem $(\cP^\ast) $ stays the same if we strengthen the constraint to $ f_i\in C_b(X) $ and $ \sup f_i<1 $ so we take $ \{f_i^n\} $ to be a sequence satisfying that and maximizing $ (\cP^\ast) $. Take $ \varphi_i^n=-\log(1-f_i^n) $ then $ (\varphi_i^n)^c=-\log(1-Sf_i^n) $ with the cost function $ c(x,y):=-\log(\cos^2(d_{\pi/2}(x,y))). $ Problem $ (\cP^\ast) $ is equivalent to the problem:\[ \sup\left\{ \sum_{i=1}^{p}\int_X\lambda_i(1-e^{-\varphi_i^c})d\mu_i:\varphi_i\in \Phi_i,\sum_{i=1}^{p}\lambda_ie^{-\varphi_i}=1 \right\}, \] where $ \Phi_i $ is the set of all functions $ \varphi $ satisfying:\begin{enumerate}
			\item $ \varphi:X\to\R\cup\{ +\infty \} $ is continuous.
			\item If $ \varphi(x)=+\infty $ then $ \mu_i(B'_x(\pi/8))=0 $.
		\end{enumerate}
		
		For all $ i=1,\dots,p-1 $, instead of $ \varphi_i^n $ we can consider $ \tilde{\varphi_i^n}:=(\varphi_i^n)^{cc} $ and $ \tilde{\varphi_p^n}=-\log\left( \dfrac{1-\sum_{i=1}^{p-1}\lambda_ie^{-\tilde{\varphi_i^n}}}{\lambda_p} \right). $ This argument is valid since $ \tilde{\varphi_i^n}\ge \varphi_i^n$ and $ (\tilde{\varphi_i^n})^c= (\varphi_i^n)^c$, $ \tilde{\varphi_p^n}\le \varphi_p^n$ so $ (\tilde{\varphi_p^n})^c\ge (\varphi_p^n)^c $. Thus, we can always consider the case that $ \varphi_i^n $ is $ c- $concave for $ i=1,\ldots,p-1. $
		
		Let $ x_0 $ be a point in $ X $, we consider the compact set $ K_{x_0}=B'_{x_0}(\pi/8) $. For every $ x,y\in K_{x_0}$ with $d(x,y)\le\pi/4 $ we have $ e^{c(x,y)}\le 2 $ on $ K_{x_0} $. Because $ \{\varphi_i^n\} $ is a sequence that maximizes the problem, we can suppose there is a constant $ C $ such that \[ \sum_{i=1}^{p}\int_X\lambda_i(1-e^{-(\varphi_i^n)^c})d\mu_i\ge C \]
		and as a consequence \[ \sum_{i=1}^{p}\lambda_i\mu_i(X)-C\ge\sum_{i=1}^{p}\int_X\lambda_ie^{-(\varphi_i^n)^c}d\mu_i. \]
		As $ \varphi_i^n(x)\le c(x,y)-(\varphi_i^n)^c(y) $ for every $x,y\in X$, we have that for every $ x\in K_{x_0} $,
		\begin{align*} \sum_{i=1}^{p}\int_{K_{x_0}}\lambda_ie^{\varphi_i^n(x)}d\mu_i(y)\le&\sum_{i=1}^{p}\int_{K_{x_0}}\lambda_ie^{c(x,y)}e^{-(\varphi_i^n)^c(y)}d\mu_i(y)\\\le&2\left( \sum_{i=1}^{p}\lambda_i\mu_i(X)-C \right).
		\end{align*}
		If $ \mu_i(K_{x_0})>0 $ then $ \{\varphi_i^n \}$ is bounded from above in $ K_{x_0} $. On the other hand, $ \{\varphi_i^n \}$ is bounded below by $\log(\lambda_i)$ from the constraint $ \sum_{i=1}^{p}\lambda_ie^{-\varphi_i^n}=1 $. Thus, $ \{\varphi_i^n \}$ is bounded in $ K_{x_0} $.
		
		As $ \{\varphi_i^n \}$ is bounded below by $\log(\lambda_i)$ we get $ (\varphi_i^n)^c(y)=\inf_{x\in X}(c(x,y)-\varphi_i^n(x)) $ is bounded above by $ -\log(\lambda_i) $. Let $ M_i(K_{x_0}) $ be an upper bound for $\{ \varphi_i^n \}$ on $ K_{x_0} $. For $ x\in K_{x_0} $ and $ y\notin B'_x\left(t\right)$ we have $c(x,y)-(\varphi_i^n)^c(y)>M_i(K_{x_0}),$ where $t=\arccos\sqrt{\dfrac{\lambda_i}{e^{M_i(K_{x_0})}}}$. Therefore, for every $x\in K_{x_0}$ we get $$ (\varphi_i^n)^{cc}(x)=\inf_{y\in X}(c(x,y)-(\varphi_i^n)^c(y)) = \inf_{y\in B_x'(t)}(c(x,y)-(\varphi_i^n)^c(y)).$$
		
		Let $ x\in K_{x_0} $, let $ \delta:=\pi/2-t $ and take $ \delta'=\delta/4 $, the cost function $ c $ is continuous on $ (B'_x(\delta')\cap K_{x_0})\times B'_x\left(t+\delta'\right) $ so it is uniformly continuous and bounded.
		Let $ \omega:[0,\infty)\rightarrow[0,\infty) $ be its modulus of continuity then for every $ x'\in B'_x(\delta')\cap K_{x_0},y,y'\in B'_x\left(t+\delta'\right) $, we have \[ \left| c(x',y)-c(x,y) \right|\le\omega(d(x',x)). \]
		As  $ \varphi_i^n= (\varphi_i^n)^{cc}$ by its $ c $-concavity, we take a minimizing sequence $ \{y_k \}\subset B_x'(t+\delta')$ for $ \varphi_i^n(x) $ to get \[ \varphi_i^n(x')-\varphi_i^n(x)\le\liminf_k (c(x',y_k)-c(x,y_k))\le\omega(d(x',x)). \]
		As $B'_{x'}(t)\subset B'_x(t+\delta')$ and $$\varphi_i^n(x')=\inf_{y\in X}(c(x',y)-(\varphi_i^n)^c(y))=\inf_{y\in B'_{x'}(t)}(c(x',y)-(\varphi_i^n)^c(y)),$$ 	
		similarly as above, take a minimizing sequence $ \{y'_k\}\subset B_x'(t+\delta')$, we also get
		\[ \varphi_i^n(x)-\varphi_i^n(x')\le\liminf_k (c(x,y'_k)-c(x',y'_k))\le\omega(d(x,x')). \]
		Since $ \omega $ does not depend on $ n $, $\omega(0)=0$ and $\omega$ is continuous at 0, we get that 
		$ \varphi_i^n|_{K_{x_0}} $ is equi-continuous.
		
		By Ascoli-Arzel\`{a} theorem, if $ \mu_i(K_{x_0})>0 $ there exists a subsequence of $ \varphi_i^n|_{K_{x_0}} $(here still denoted by $ \varphi_i^n|_{K_{x_0}} $) and $ \varphi_{i,K_{x_0}}\in C(K_{x_0}) $ such that $ \varphi_i^n|_{K_{x_0}}\ $ uniformly converges to $ \varphi_{i,K_{x_0}},i=1,\ldots,p-1 $.
		
		As $ X $ is Polish, let $ A=\{x_1,x_2,\ldots \} $ be a countable dense subset of $ X $. Let $ K^1=\bigcup_{\mu_1(K_{x_j})>0}K_{x_j} $ and by a standard diagonal argument we get a subsequence $ \varphi_1^n $ that is pointwise convergent to $ \varphi_{1,K_{x_j}} $ whenever $ \mu_1(K_{x_j})>0 $, then we can define on $ K^1 $ \[ \varphi_1(x):=\varphi_{1,K_{x_j}}(x) \] when $ x\in K_{x_j}$ and $ \mu_1(K_{x_j})>0 $. For any point $ x_k\in A\setminus K^1 $, if $ \varphi_1^n(x_k) $ is bounded then we can take a subsequence of $ \varphi_1^n $ such that $ \varphi_1^n(x_k) $ is convergent and define $ \varphi_1(x_k) $ by its limit, otherwise there exists a subsequence of $ \varphi_1^n $ such that $ \varphi_1^n(x_k) $ increases to infinity and we define $\varphi_1(x_k)=+\infty$. Using a standard diagonal argument again and the density of $ A $, we construct a subsequence $ \varphi_1^n $ that pointwise converges to $ \varphi_1 $ and $ \varphi_1\in\Phi_1 $. With similar arguments, we can construct a subsequence $ (\varphi_1^n,\ldots,\varphi_{p-1}^n) $ that pointwise converges to $ (\varphi_1,\ldots,\varphi_{p-1}) $ and $ \varphi_i\in\Phi_i $ for every $i=1,\cdots,p-1$.
		
		We define \[ \varphi_p:=-\log\left( \dfrac{1-\sum_{i=1}^{p-1}\lambda_ie^{-\varphi_i}}{\lambda_p} \right) .\]
		If $ \mu_p(K_{x_j})>0 $, then \[ \sum_{i=1}^{p-1}\lambda_ie^{-\varphi_i^n}=1-\lambda_pe^{-\varphi_p^n}\le1-e^{-C_j}\] where $ C_j=\log\left(\dfrac{2\left( \sum_{i=1}^{p}\lambda_i\mu_i(X)-C \right)}{\lambda_p\mu_p(K_{x_j})}\right) $ so we also have $ \varphi_p^n $ pointwise converges to $ \varphi_p $ and $ \varphi_p\in\Phi_p $.
		
		Using Fatou's Lemma, we have that $ f_i=1-e^{-\varphi_i} $ solve problem $ (\cP^\ast). $
		To finish, we need to check that $ \limsup_nSf_i^n\le Sf_i $. For $ y\in X $, if $ B_y(\pi/2)\not\subset X_{f_i} $ then $ Sf_i(y)=-\infty $, we show that $ \limsup_n Sf_i^n(y)=-\infty $. Indeed, there exists $ x\in B_y(\pi/2) $ such that $ f_i(x)=1 $ so $ f^n_i(x) $ converges to 1. $ Sf_i^n(y)\le1-\dfrac{\cos^2(d_{\pi/2}(x,y))}{1-f_i^n(x)} $ which converges to $ -\infty $. If $ B_y(\pi/2)\subset X_{f_i} $ then \begin{align*}
		Sf_i(y)=\inf_{x\in X_{f_i}}\left\{ 1-\dfrac{\cos^2(d_{\pi/2}(x,y))}{1-f_i(x)}\right\}=&\inf_{x\in X_{f_i}}\limsup_n\left\{ 1-\dfrac{\cos^2(d_{\pi/2}(x,y))}{1-f_i^n(x)} \right\}\\\ge&\limsup_n\inf_{x\in X_{f_i}}\left\{ 1-\dfrac{\cos^2(d_{\pi/2}(x,y))}{1-f_i^n(x)} \right\}\\\ge&\limsup_n\inf_{x\in X}\left\{ 1-\dfrac{\cos^2(d_{\pi/2}(x,y))}{1-f_i^n(x)} \right\}\\=&\limsup_nSf_i^n.
		\end{align*}
	\end{proof}
\subsection{Uniqueness of Hellinger-Kantorovich barycenters}	
	Now we are ready to investigate the uniqueness of Hellinger-Kantorovich barycenters. Recall that given $\gamma,\mu\in \cM(X)$ with $\mu(X)+\gamma(X)>0$, from \cite[Lemma 2.3]{MR3763404} there exist Borel functions $\sigma,\rho:X\to [0,\infty)$ and a Borel partition $(A,A_\gamma,A_\mu)$ of $X$ satisfying the following properties:
	\[A=\{x\in X:\sigma(x)>0\}=\{x\in X:\rho(x)>0\}, \mbox{ } \sigma(x)\rho(x)=1, \forall x\in A;\]
	\[\gamma=\sigma\mu+\gamma^\perp, \gamma^\perp \perp \mu,\mu(A_\gamma)=\gamma^\perp(X\setminus A_\gamma)=0;\]
	\[\mu=\rho\gamma+\mu^\perp, \mu^\perp \perp \gamma,\gamma(A_\mu)=\mu^\perp(X\setminus A_\mu)=0;\]
	\[\sigma\in L^1(X,\mu), \rho\in L^1(X,\gamma).\]
	
	Furthermore, the sets $A,A_\gamma,A_\mu$ and the maps $\sigma,\rho$ are uniquely determined up to $(\mu+\gamma)$-negligible sets.
	
	Let $(X,d)$ be a metric space having property $(BC)$. Let $ \mu\in \cM(X) $ be a solution for $ (\cP) $ and $ (f_1,\dots, f_p)\in F_1\times \dots\times F_p $ with $\sum_{i=1}^p\lambda_if_i=0$ be a solution for $ (\cP^\ast) $. Then
\begin{align}	
\label{F-crucial formula}
	\sum_{i=1}^p\lambda_iHK^2(\mu,\mu_i)=\sum_{i=1}^p \int_X \lambda_iSf_id\mu_i=\sum_{i=1}^p \int_X \lambda_iSf_id\mu_i+\sum_{i=1}^p \int_X \lambda_if_id\mu.
	\end{align}
	From lemma \ref{L-formula of HK and LET in terms of $F_i$} we have 
	\[HK^2(\mu,\mu_i)= \int_X \lambda_iSf_id\mu_i+ \int_X \lambda_if_id\mu, \mbox{ for every } i=1,\dots, p,\]
	and then applying theorem \ref{T-identity between HK and LET} we get	$$\LET(\mu,\mu_i)=\int_X f_id\mu+\int_XSf_id\mu_i. $$
	
	Let $ \boldsymbol{\gamma}\in \Opt_{\LET}(\mu,\mu_i) $, then by \cite[Theorem 6.3 (b)]{MR3763404} we know $ \gamma_1,\gamma_2 $ are absolutely continuous with respect to $ \mu,\mu_i $ and there exist Borel sets $ A_j\subset\supp(\gamma_j) $ with $ \gamma_j(X\setminus A_j)=0 $ and Borel densities $ \sigma_1:A_1\to (0,\infty) $ of $\gamma_1$ w.r.t $\mu$, $ \sigma_2:A_2\to (0,\infty) $ of $\gamma_2$ w.r.t $\mu_i$ such that \begin{align*} 
	\sigma_1(x_1)\sigma_2(x_2)\ge&\cos^2(d_{\pi/2}(x_1,x_2))\quad\text{in }A_1\times A_2,\\
	\sigma_1(x_1)\sigma_2(x_2)=&\cos^2(d_{\pi/2}(x_1,x_2))\quad\boldsymbol{\gamma}\text{-a.e in }A_1\times A_2. 
	\end{align*}
	As $ \gamma_j(X\setminus A_j)=0 $ for $j=1,2$ we get $\gamma(A_1\times X)=\gamma(X\times X)=\gamma(X\times A_2)$ and hence $\gamma(A_1\times A_2)=\gamma(X\times X)$. Because $\sigma_1(x)\sigma_2(y)=\cos^2(d_{\pi/2}(x,y)$ for $ \gamma $-a.e $(x,y)$ in $ A_1\times A_2 $ and $ \sigma_j>0 $ in $ A_j $ we get that $ \gamma $ is concentrated on $\left\{ (x,y)\in A_1\times A_2|d(x,y)<\pi/2 \right\}$. Now we will show that
	\begin{align*}
	\dfrac{1-f_i(x)}{\sigma_1(x)}+\dfrac{1-Sf_i(y)}{\sigma_2(y)}\geq 2  \mbox{ for } \gamma -\mbox{a.e } (x,y) \mbox{ in } A_1\times A_2 .
	 \end{align*}
	 Recall that the function $Sf_i:X\to \R\cup\{-\infty\}$ is defined by
	$$	Sf_i(y):=\left\{\begin{array}{l}\inf_{x\in X_{f_i}}\left\{1-\dfrac{\cos^2(d_{\pi/2}(x,y))}{1-f_i(x)}\right\} \mbox{ if } B_y(\pi/2)\subset X_{f_i},\\-\infty \mbox{ otherwise },\end{array}\right.$$ where $ X_{f_i}=\left\{ x\in X|f_i(x)<1 \right\}. $
	
	If $Sf_i(y)=-\infty$ then it is clear that $\dfrac{1-f_i(x)}{\sigma_1(x)}+\dfrac{1-Sf_i(y)}{\sigma_2(y)}\geq 2$. Therefore, we only consider $Sf_i(y)\ne -\infty$. In this case, by Cauchy-Schwarz inequality we have \[ \dfrac{1-f_i(x)}{\sigma_1(x)}+\dfrac{1-Sf_i(y)}{\sigma_2(y)}\ge2\sqrt{\dfrac{1-f_i(x)}{\sigma_1(x)}\dfrac{1-Sf_i(y)}{\sigma_2(y)}}. \] 
	 Hence, we only need to check $(1-f_i(x))(1-Sf_i(y))\ge \sigma_1(x)\sigma_2(y)$ for every $(x,y)\in A_1\times A_2$ with $ Sf_i(y)\ne-\infty $ and $ d(x,y)<\pi/2 $. Let $(x,y)\in A_1\times A_2$ satisfying $ Sf_i(y)\ne-\infty $ and $ d(x,y)<\pi/2 $.	Because $ Sf_i(y)\ne-\infty $, from the definition of $Sf_i$ we have $B_y(\pi/2)\subset X_{f_i}$ and hence $x\in X_{f_i} $. As $ Sf_i(y)=\inf_{z\in X_{f_i}}\left\{ 1-\dfrac{\cos^2(d_{\pi/2}(z,y))}{1-f_i(z)} \right\} $ we get that $$ (1-f_i(x))(1-Sf_i(y))\ge\cos^2(d_{\pi/2}(x,y)) = \sigma_1(x)\sigma_2(y) .$$ 
	And hence we finish the proof that \[ \dfrac{1-f_i(x)}{\sigma_1(x)}+\dfrac{1-Sf_i(y)}{\sigma_2(y)}\geq 2  \mbox{ for } \gamma -\mbox{a.e } (x,y) \mbox{ in } A_1\times A_2 . \]
	It implies that	
	\begin{align*}
    \LET(\mu,\mu_i)=&\int_X f_id\mu+\int_XSf_id\mu_i\\
	=&\int_Xd\mu-\int_X(1-f_i)d\mu+\int_Xd\mu_i-\int_X(1-Sf_i)d\mu_i\\
	=&\mu(X)+\mu_i(X)-\int_{A_1} \dfrac{1-f_i}{\sigma_1}d\gamma_1-\int_{A_2}\dfrac{1-Sf_i}{\sigma_2}d\gamma_2\\
	&-\int_{X\setminus A_1} (1-f_i)d\mu-\int_{X\setminus A_2} (1-Sf_i)d\mu_i\\
	\le&\mu(X)+\mu_i(X)-2\boldsymbol{\gamma}(A_1\times A_2)\\
	=&\mu(X)+\mu_i(X)-2\boldsymbol{\gamma}(X\times X).
	\end{align*}
	On the other we have $\mu(X)+\mu_i(X)-2\boldsymbol{\gamma}(X^2)= \LET(\mu,\mu_i)$ \cite[Theorem 6.3 (d)]{MR3763404}.
	Hence, $ \sigma_1=1-f_i $ $ \gamma_1 $-a.e on $ A_1 $,$ 1-f_i=0 $ $ \mu $-a.e on $ X\setminus A_1 $ and $ \sigma_2=1-Sf_i $ $ \gamma_2 $-a.e on $ A_2 $, $ 1-Sf_i=0 $ $ \mu_i $-a.e on $ X\setminus A_2 $. Because $ \sup Sf_i<1 $ we can consider $ \supp\mu_i=A_2 $. 
	
	We say that $\nu\in \cM(X)$ satisfies condition (*) if $d(x,\supp\nu)<\pi/2$ for every $x\in X$. 
	
	Because $ d $ must be greater than $ \pi/2 $ in $ \supp \mu^{\perp}\times \supp\mu_i $ by \cite[Theorem 6.3 (b)]{MR3763404} we get that if $\mu_i$ satisfies condition (*) then $ \supp\mu^{\perp}=\emptyset $ and hence $ \mu^{\perp} $ is the null measure.
	
	Now we assume that $ X =\R^n$ with the Euclidean distance for some $n\in \N$, and there exists some $ i $ such that $ \mu_i $ is absolutely continuous with respect to the Lebesgue measure. We will use the notion of approximate differential denoted by $\tilde{D}$ \cite[Definition 5.5.1]{Ambrosio}. Then by \cite[Theorem 6.6]{MR3763404} $ \sigma_2 $ is approximately differentiable $ \mu_i $-a.e. and the optimal plan $ \boldsymbol{\gamma} $ of $ \mu,\mu_i $ is uniquely determined by the push forward measure $ (\boldsymbol{t},Id)_{\#}\gamma_2 $, where \begin{align*}
	\boldsymbol{t}(x_2):=&x_2-\dfrac{\arctan(\| \boldsymbol{\xi}(x_2) \|)}{\| \boldsymbol{\xi}(x_2) \|}\boldsymbol{\xi}(x_2),\\
	\boldsymbol{\xi}(x_2):=&-\dfrac{1}{2}\tilde{D}\log\sigma_2(x_2).
	\end{align*}
	Since $ Sf_i $ is already determined so $ \sigma_2,\gamma_2,\gamma_1,\sigma_1 $ will be determined regardless of what $ \mu $ is. We can write $ \mu $ in the form $ \sigma_1^{-1}\gamma_1+\mu^{\perp} $. And if $ \mu_i $ satisfies condition (*) then $ \mu $ is uniquely determined. We conclude the result into the following proposition.
	\begin{thm}
		\label{T-uniqueness}
		Let $ \mu_1,\ldots,\mu_p $ be nonnegative Radon measures on $\R^n$ and let $ \lambda_1,\ldots,\lambda_p $ be positive real numbers satisfying $ \sum_{i=1}^{p}\lambda_i=1 $. Let $\mu$ be a solution for $(\cP)$ and $ (f_1,\dots, f_p)$ be a solution for $ (\cP^\ast )$. Taking $ A_i:=\supp\mu_i,\sigma_i:=1-Sf_i $ on $ A_i $,$ \gamma_i:=\sigma_i\mu_i $. 
		\begin{enumerate}
			\item  If there exists $i$ such that $ \sigma_i $ is $ \mu_i $-a.e. approximately differentiable on $ A_i $ (for instance $ \mu_i $ is absolutely continuous to Lebesgue measure), 
			then $ \mu $ must have the form $ \tilde{\sigma_i}^{-1}\tilde{\gamma_i}+\mu^{\perp} $, where $ \tilde{\sigma_i}:=1-f_i $ on $ \tilde{A_i}:=\boldsymbol{t}_i(A_i)\cap\{x\in\R^n|f_i(x)<1\} $ and $ \tilde{\gamma_i}=\boldsymbol{t}_{i\#}\gamma_i $ with 
			\begin{align*}
			\boldsymbol{t}_i(x):=&x-\dfrac{\arctan(\| \boldsymbol{\xi}_i(x) \|)}{\| \boldsymbol{\xi}_i(x) \|}\boldsymbol{\xi}_i(x),\\
			\boldsymbol{\xi}_i(x):=&-\dfrac{1}{2}\tilde{D}\log\sigma_i(x).
			\end{align*}
			\item Furthermore, if $ \mu_i $ above also satisfies condition (*) then the solution $ \mu $ is unique.
		\end{enumerate}
	\end{thm}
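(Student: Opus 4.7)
The plan is to use the explicit dual information packaged in the paragraphs preceding the statement to pin down the optimal Logarithmic Entropy Transport plan between $\mu$ and each $\mu_i$, and then to invoke \cite[Theorem 6.6]{MR3763404} in order to invert the resulting transport and recover $\mu$. First, from the identity \eqref{F-crucial formula} combined with Lemma \ref{L-formula of HK and LET in terms of $F_i$}, every term in the sum must saturate, so that $(f_i,Sf_i)$ is an optimal dual pair for $\LET(\mu,\mu_i)$ and the value matches $HK^2(\mu,\mu_i)$ through Theorem \ref{T-identity between HK and LET}.

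Fix $i$ and let $\boldsymbol\gamma\in\Opt_{\LET}(\mu,\mu_i)$, with Radon--Nikodym densities $\gamma_1=\sigma_1\mu$ and $\gamma_2=\sigma_2\mu_i$. Via the Cauchy--Schwarz chain displayed just before the statement, together with the LET formula $\LET(\mu,\mu_i)=\mu(X)+\mu_i(X)-2\boldsymbol\gamma(X^2)$ from \cite[Theorem 6.3(d)]{MR3763404}, all inequalities must collapse to equalities. This yields $\sigma_1=1-f_i$ $\gamma_1$-a.e.\ on $A_1$ and $\sigma_2=1-Sf_i$ $\gamma_2$-a.e.\ on $A_2$, together with $1-f_i=0$ $\mu$-a.e.\ on $X\setminus A_1$ and $1-Sf_i=0$ $\mu_i$-a.e.\ on $X\setminus A_2$. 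Since $\sup Sf_i<1$, we may identify $A_2$ with $\supp\mu_i=A_i$, so the data $\sigma_i=1-Sf_i$ on $A_i$ and $\gamma_i=\sigma_i\mu_i$ depend only on the dual solution, not on the particular choice of minimizer $\mu$.

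Under the hypothesis of (1), $\sigma_i$ is approximately differentiable $\mu_i$-a.e.\ on $A_i$, so \cite[Theorem 6.6]{MR3763404} applies to $\LET(\mu,\mu_i)$ and forces the unique representation $\boldsymbol\gamma=(\boldsymbol t_i,\mathrm{Id})_\#\gamma_i$ with $\boldsymbol t_i$ and $\boldsymbol\xi_i$ as in the statement. Pushing forward, $\gamma_1=\boldsymbol t_{i\#}\gamma_i=\tilde\gamma_i$. Combining this with the identification $\sigma_1=1-f_i$ on the support $\tilde A_i=\boldsymbol t_i(A_i)\cap\{f_i<1\}$ and the Lebesgue-type decomposition of $\mu$ with respect to $\gamma_1$ recalled above the statement, one obtains $\mu=\tilde\sigma_i^{-1}\tilde\gamma_i+\mu^\perp$, which is precisely the formula in (1).

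For (2), condition $(*)$ states $d(x,\supp\mu_i)<\pi/2$ for every $x\in\R^n$. By \cite[Theorem 6.3(b)]{MR3763404}, the singular part $\mu^\perp$ of the first marginal must be concentrated where $d(x,\supp\mu_i)\geq \pi/2$; condition $(*)$ excludes such points, so $\mu^\perp=0$ and $\mu$ is uniquely determined by the dual solution. The main obstacle is the saturation step: one has to verify the pointwise bound $\tfrac{1-f_i(x)}{\sigma_1(x)}+\tfrac{1-Sf_i(y)}{\sigma_2(y)}\geq 2$ $\boldsymbol\gamma$-a.e., which forces equality in the above chain and in turn pins down $\sigma_1,\sigma_2$ in terms of $f_i,Sf_i$. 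This requires careful use of the optimality identity $\sigma_1(x)\sigma_2(y)=\cos^2(d_{\pi/2}(x,y))$, the fact that $\boldsymbol\gamma$ is concentrated on $\{(x,y)\in A_1\times A_2:d(x,y)<\pi/2\}$, and the definition of $Sf_i$; once this link between the dual variables and the LET densities is established, the structural recovery via \cite[Theorem 6.6]{MR3763404} and the disposal of $\mu^\perp$ under $(*)$ are essentially automatic.
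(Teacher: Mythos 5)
Your proposal follows essentially the same route as the paper: saturation of each term in \eqref{F-crucial formula} via Lemma \ref{L-formula of HK and LET in terms of $F_i$}, the pointwise bound $\tfrac{1-f_i}{\sigma_1}+\tfrac{1-Sf_i}{\sigma_2}\geq 2$ forced to equality against the identity $\LET(\mu,\mu_i)=\mu(X)+\mu_i(X)-2\boldsymbol{\gamma}(X^2)$, recovery of $\boldsymbol{\gamma}$ via \cite[Theorem 6.6]{MR3763404}, and elimination of $\mu^\perp$ under condition (*) using \cite[Theorem 6.3 (b)]{MR3763404}. The argument is correct and matches the paper's proof step for step.
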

	\begin{remark}
		Theorem \ref{T-uniqueness} would hold for all metric spaces having property $(BC)$ and for which \cite[Theorem 6.6]{MR3763404} is still valid. From the proof of \cite[Theorem 6.6]{MR3763404}, we see that it could be extended for all spaces that we can apply Rademacher's theorem, i.e. every Lipschitz function is approximately differentiable. For simplicity, we only present theorem \ref{T-uniqueness} for $\R^n$ only.   
	\end{remark}
	\begin{example}
	\label{E-dual formulation of HK barycenters}
		Let $ X=\R,p=2,\lambda_1=\lambda_2=1/2 $ and $ \mu_1=\mathcal{L}_{[-1-\pi,-\pi]}, \mu_2=\mathcal{L}_{[\pi,\pi+1]} $ where $ \mathcal{L} $ is the Lebesgue measure and $ \mathcal{L}_A $ is the restriction measure on Borel set $ A $. We define \[ f_1(x)= \left\{\begin{array}{cc}
		-1&\text{ if }x\le-\pi/2\\
		\dfrac{2x}{\pi}&\text{ if }-\pi/2\le x\le\pi/2\\
		1&\text{ if }x\ge\pi/2
		\end{array}\right. \]
		and $ f_2=-f_1 $. Now we check that $ (f_1,f_2) $ is a solution for $ (\cP^\ast) $. Let $(g_1,g_2)\in F_1\times F_2$ such that $\lambda_1g_1+\lambda_2g_2=0$. Then $g_1+g_2=0$, $ g_i\le1$, and hence $ -1\le g_i\le 1$. As $g_1\in F_1$ and $ \mu_1=\mathcal{L}_{[-1-\pi,-\pi]}$, by the definition of $F_1$ we must have $g_1<1$ on $[-1-\pi,-\pi]$. Because $ g_1 $ is bounded below by $ -1 $, we have $ Sg_1(y)\leq \inf_{x\in X_{g_1}}\left\{1-\dfrac{\cos^2(d_{\pi/2}(x,y))}{1-g_1(x)}\right\}\leq 1-\frac{1}{2}=\frac{1}{2}$ for every $y\in [-1-\pi,-\pi]$. Similarly, we have 
$ Sg_2\leq \frac{1}{2} $ on $ [\pi,1+\pi]$. By the definitions of $f_1,f_2$, we have that $(f_1,f_2)\in F_1\times F_2$, $\lambda_1f_1+\lambda_2f_2=0$, $ Sf_1=\frac{1}{2} $ on $ [-1-\pi,-\pi] $, and $ Sf_2=\frac{1}{2} $ on $ [\pi,1+\pi]$. As $ Sf_1,Sf_2 $ reach the largest possible values on the supports of $ \mu_1,\mu_2 $, respectively, we get that $ (f_1,f_2) $ is a solution of $ (\cP^\ast). $		
		
Applying proposition \ref{P-dual formulations of barycenters} we get that 
$$ \inf_{\mu\in \cM(X)}\lambda_1HK^2(\mu,\mu_1)+\lambda_2HK^2(\mu,\mu_2)=\frac{1}{2}\sum_{i=1}^2\int_XSf_id\mu_i=\frac{1}{2}.$$
 Consider $ \mu_1 $, as in theorem \ref{T-uniqueness}, we have $ A_1=[-1-\pi,-\pi],\sigma_1=1/2,\boldsymbol{t}_1=Id $ and hence a barycenter $ \mu $ has the form $ \dfrac{1}{4}\mu_1+\mu^{\perp} $ ( $ \supp\mu^{\perp}\subset[\pi/2,+\infty) $ ). Similarly consider $ \mu_2 $, we will get $ \mu=\dfrac{1}{4}\mu_1+\dfrac{1}{4}\mu_2 $. We can check this $ \mu $ is truly a barycenter. Indeed $ \gamma_i=1/2(Id,Id)_\#\mu_i\in\Opt_{LET}(\mu,\mu_i) $ (\cite[Theorem 6.3 (b)]{MR3763404}) so $ HK^2(\mu,\mu_i)=1/2 $.

In this case we still have the uniqueness of barycenter although both $\mu_1,\mu_2$ do not satisfy condition (*). However, in general we may not have it. 					
	\end{example}
	\begin{example}(\cite[Example 11 ii)]{MR3542003})
		\label{E-nonuniqueness of HK barycenters}
		Let $ X=\R^n,p=2 $ and $ \mu_1=a_1\delta_{x_1}, \mu_2=a_2\delta_{x_2} $ where $ a_i $ is a positive number and $ \delta_{x_i} $ is the Dirac measure at point $ x_i\in X $ such that $ |x_1-x_2|=\pi/2 $. We have two geodesics which connect $ \mu_1 $ and $ \mu_2 $. The first geodesic is the Hellinger curve $ \mu^H(s):=(1-s)^2a_1\delta_{x_1}+s^2a_2\delta_{x_2} $ and the second one is defined by $ \mu(s):=a(s)\delta_{x(s)} $ where $ a(s):=(1-s)^2a_1+s^2a_2,x(s):=(1-p(s))x_1+p(s)x_2 $ and $ p(s):=\left\{\begin{array}{l}\dfrac{2}{\pi}\arctan\left(\dfrac{s\sqrt{a_2}}{(1-s)\sqrt{a_1}}\right)\mbox{ if }s\in[0,1)\\1\mbox{ if }s=1. \end{array}\right.$
		
		For $ \mu_1,\mu_2 $ with different constant-speed geodesics $ \mu^1(s),\mu^2(s) $ which connect them, there is $ t\in(0,1) $ such that $ \mu^1(t)\ne\mu^2(t) $. Both $ \mu^1(t),\mu^2(t) $ minimize the problem $ \inf_{\mu\in \cM(X)}(1-t)HK^2(\mu,\mu_1)+tHK^2(\mu,\mu_2). $
	\end{example}
	\section{Homogeneous multimarginal problems and Hellinger-Kantorovich barycenters}
	Let $ (X,d) $ be a metric space and $ \mathfrak{C} $ be its cone, $ \mu_1,\ldots,\mu_p $ be nonnegative Radon measures on $ X $, $ \lambda_1,\ldots,\lambda_p $ be positive real numbers satisfying $\sum_{i=1}^p\lambda_i=1$. We define the cost function $c:\fC^p\to [0,\infty)$ by 
\begin{align*}	
	c(\boldsymbol{\eta}):=
	\inf_{\eta\in\mathfrak{C}}\sum_{i=1}^p\lambda_id_\mathfrak{C}^2(\eta,\eta_i) \mbox{ for every } \boldsymbol{\eta}=(\eta_1,\dots,\eta_p)\in \fC^p.
\end{align*}	
	As $c$ is the infimum of continuous functions we get that $c$ is upper semi-continuous and hence it is measurable.
	
	The multi-marginal problem for the cost function $c$ above has been studied first by Gangbo and \'{S}wi\c{e}ch for $\R^n$ \cite{GS}. Their result has been generalized in several directions in \cite{Pass2011,KimPass2014,MR3372314,MR3004954}. It also has been used to study Wasserstein barycenters \cite{MR2801182,MR3004954}. 
	
	Similar to \cite{MR2801182,MR3004954}, now we investigate the following homogeneous multimarginal problem and its connection with Hellinger-Kantorovich barycenters
	\begin{align*}
	\inf\left\{ \int_{\mathfrak{C}^p}c((\eta_1,\ldots,\eta_p))d\boldsymbol{\alpha},\boldsymbol{\alpha}\in\cM_2(\mathfrak{C}^p),\mathfrak{h}^2_i\boldsymbol{\alpha}=\mu_i \right\}.
	\end{align*}
	
	Before proving the existence of solutions for the homogeneous multimarginal problem, let us recall dilations. If $\boldsymbol{\alpha}\in \cM(\fC^p)$ and $\vartheta:\fC^p\to (0,\infty)$ is a Borel map in $L^2(\fC^p,\boldsymbol{\alpha})$ we define the map $prd_\vartheta:\fC^p\to \fC^p$ by
	$$prd_\vartheta(\boldsymbol{\eta})_i:=\eta_i\cdot (\vartheta(\boldsymbol{\eta}))^{-1}, \mbox{ for every } 
	\boldsymbol{\eta}=(\eta_1,\dots,\eta_p)\in \fC^p,i=1,\dots,p.$$
	\begin{lem}
		Let $(X,d)$ be a metric space. Then the function $c$ is continuous and the homogeneous multimarginal problem has a solution.
	\end{lem}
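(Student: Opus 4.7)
The strategy is to handle continuity directly from the earlier lemma and to treat existence by the direct method.

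For continuity of $c$, note that $c$ is the pointwise infimum over $\eta\in\mathfrak{C}$ of the continuous family $\boldsymbol{\eta}\mapsto \sum_{i=1}^p\lambda_i d_\mathfrak{C}^2(\eta,\eta_i)$, so $c$ is automatically upper semi-continuous. Lower semi-continuity is exactly Lemma \ref{L-lower semi-continuity of cost functions induced from barycenters} applied to the metric space $(Y,d)=(\mathfrak{C},d_\mathfrak{C})$ with $k=p$. Combining the two yields continuity.

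For existence I plan to run the direct method. Lemma \ref{L-lifting gluing measure} furnishes an admissible $\boldsymbol{\beta}\in\cP_2(\mathfrak{C}^p)$ with $\mathfrak{h}_i^2\boldsymbol{\beta}=\mu_i$, so the feasible set is nonempty; taking $\eta=\mathfrak{o}$ in the definition of $c$ gives the crude estimate $c(\boldsymbol{\eta})\le \sum_i\lambda_i\mathbf{r}_i^2(\boldsymbol{\eta})$, which together with the identity $\int\mathbf{r}_i^2 d\boldsymbol{\alpha}=\mu_i(X)$ enforced by the marginal constraints shows the infimum is bounded above by $\sum_i\lambda_i\mu_i(X)$. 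Choose a minimizing sequence $\{\boldsymbol{\alpha}^n\}$; the constraints immediately give the uniform second-moment bounds $\int\mathbf{r}_i^2\,d\boldsymbol{\alpha}^n=\mu_i(X)$ for each $i$.

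The main obstacle will be tightness, because the total mass of $\boldsymbol{\alpha}^n$ is not a priori controlled: $\boldsymbol{\alpha}^n$ may accumulate arbitrary mass at the full cone point $(\mathfrak{o},\ldots,\mathfrak{o})$, where both $\mathbf{r}_i^2$ and the cost $c$ vanish, so naive Prokhorov compactness fails. I would first subtract the atom at $(\mathfrak{o},\ldots,\mathfrak{o})$, which alters neither the homogeneous marginals nor the objective. For the remaining sequence, a Chebyshev estimate gives $\boldsymbol{\alpha}^n\bigl(\{\mathbf{r}_i>R\}\bigr)\le \mu_i(X)/R^2$, handling the radial directions, while inner regularity of each Radon measure $\mu_i$ yields compact subsets of $X$ capturing all but $\varepsilon$ of the mass of $\mathbf{r}_i^2\boldsymbol{\alpha}^n$; together these produce tightness of the modified sequence.

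Extract a narrowly convergent subsequence $\boldsymbol{\alpha}^{n_k}\rightharpoonup \boldsymbol{\alpha}$. The uniform second-moment bound supplies the uniform integrability of $\mathbf{r}_i^2$ needed to pass the constraint $\mathfrak{h}_i^2\boldsymbol{\alpha}^{n_k}=\mu_i$ to the limit by testing against bounded continuous functions on $X$ combined with truncation in the radial direction. Since $c$ is now known to be continuous, nonnegative, and dominated by the uniformly integrable $\sum_i\lambda_i\mathbf{r}_i^2$, a Portmanteau-style argument yields $\int c\,d\boldsymbol{\alpha}\le \liminf_k\int c\,d\boldsymbol{\alpha}^{n_k}$, so $\boldsymbol{\alpha}$ attains the infimum.
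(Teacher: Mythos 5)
Your continuity argument is exactly the paper's (upper semi-continuity from the infimum of continuous functions, lower semi-continuity from Lemma \ref{L-lower semi-continuity of cost functions induced from barycenters}), and the feasibility and the bound $c(\boldsymbol{\eta})\le\sum_i\lambda_i\mathbf{r}_i^2(\boldsymbol{\eta})$ are fine. The gap is in your compactness step. Prokhorov's theorem for finite (non-probability) measures requires, besides equal tightness, a uniform bound on the total masses $\boldsymbol{\alpha}^n(\fC^p)$, and nothing in your argument provides one. Subtracting the atom at $(\mathfrak{o},\ldots,\mathfrak{o})$ does not help: a minimizing sequence can carry mass of order $n$ at points where every radius equals $1/n$, which contributes $O(1/n)$ to each homogeneous marginal and $O(1/n)$ to the cost (since $c(\boldsymbol{\eta})\le\sum_i\lambda_i\mathbf{r}_i^2(\boldsymbol{\eta})$), so after a small compensation such competitors remain admissible and minimizing while their total mass blows up. Your Chebyshev bound $\boldsymbol{\alpha}^n(\{\mathbf{r}_i>R\})\le\mu_i(X)/R^2$ only controls the region far from the vertex, and the near-vertex region $\{\sup_i\mathbf{r}_i\le\delta\}$, where the uncontrolled mass sits, is a bounded but non-compact subset of $\fC^p$ when $X$ is not compact; so the "modified sequence" is neither mass-bounded nor tight, and the extraction of a narrowly convergent subsequence fails.

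The paper closes exactly this hole with a normalization by dilations, which is the one idea your proposal is missing. After discarding the vertex atom, one exploits the $2$-homogeneity of the problem: for a Borel map $\vartheta:\fC^p\to(0,\infty)$ one has $c(prd_\vartheta(\boldsymbol{\eta}))=c(\boldsymbol{\eta})\,(\vartheta(\boldsymbol{\eta}))^{-2}$, and the homogeneous marginals transform compatibly, so each admissible $\boldsymbol{\alpha}^n$ can be replaced by a \emph{probability} measure concentrated on the bounded set $\fC^p[\boldsymbol{\Xi}]$ with $\boldsymbol{\Xi}=\sqrt{\sum_i\mu_i(X)}$, having the same marginals and the same cost. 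For such normalized measures the mass bound is trivial and equal tightness follows from \cite[Lemma 7.3]{MR3763404}, after which your passage to the limit (Portmanteau for the nonnegative lower semi-continuous $c$, and the marginal constraints via truncation and uniform integrability of $\mathbf{r}_i^2$) goes through as you describe. Without this renormalization step the direct method as you have set it up does not close.
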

	\begin{proof}
		Applying lemma \ref{L-lower semi-continuity of cost functions induced from barycenters} we get that $c$ is lower semi-continuous and hence it is continuous.

		As there always exists $ \boldsymbol{\alpha}\in\cM_2(\mathfrak{C}^p),\mathfrak{h}^2_i\boldsymbol{\alpha}=\mu_i $ by \cite[(5.29)]{MR3763404}), we can take a minimizing sequence $ \boldsymbol{\alpha_n} $ of the homogeneous multimarginal problem. From the definition of $\mathfrak{h}_i$, we can restrict the 
		constraint in the problem to all $ \boldsymbol{\alpha}\in \cM_2(\fC^p)$ concentrated on $ \mathfrak{C}^p\setminus\{(\mathfrak{o},\ldots,\mathfrak{o})\} $, where $\mathfrak{o}$ is the point $[x,0]$ in $\fC$. Then using dilations  we can restrict furthermore the constraint to $ \boldsymbol{\alpha}\in\cP_2(\mathfrak{C}^p)$, with $\mathfrak{h}^2_i\boldsymbol{\alpha}=\mu_i $ and $ \boldsymbol{\alpha} $ is concentrated on $ \mathfrak{C}[\boldsymbol{\Xi}] $, where $ \boldsymbol{\Xi}=\sqrt{\sum_{i=1}^p\mu_i(X)} $. This is valid since $ c(prd_\vartheta(\boldsymbol{\eta}))=c(\boldsymbol{\eta})(\vartheta(\boldsymbol{\eta}))^{-2} $ for every Borel map $\vartheta:\fC^p\to (0,\infty)$, $\eta\in \fC^p$.
		Indeed, let $ \boldsymbol{\eta}^n $ and $ \overline{\boldsymbol{\eta}}^n $ be minimizing sequences for the barycenter problem for the point $ \boldsymbol{\eta} $ and the point $ prd_\vartheta(\boldsymbol{\eta}) $, respectively then we have that
		 \begin{align*} c(prd_\vartheta(\boldsymbol{\eta}))=&\lim_n\sum_{i=1}^p\lambda_id_\mathfrak{C}^2(\overline{\boldsymbol{\eta}}^n,\eta_i\cdot(\vartheta(\boldsymbol{\eta}))^{-1})\\
		=&\lim_n\sum_{i=1}^p\lambda_id_\mathfrak{C}^2(\overline{\boldsymbol{\eta}}^n\cdot\vartheta(\boldsymbol{\eta}),\eta_i)(\vartheta(\boldsymbol{\eta}))^{-2}\\
		\ge&c(\boldsymbol{\eta})(\vartheta(\boldsymbol{\eta}))^{-2}\\
		\\=&\lim_n\sum_{i=1}^p\lambda_id_\mathfrak{C}^2(\boldsymbol{\eta}^n\cdot(\vartheta(\boldsymbol{\eta}))^{-1},\eta_i\cdot(\vartheta(\boldsymbol{\eta}))^{-1})\\
		\ge&c(prd_\vartheta(\boldsymbol{\eta})). \end{align*}
		
		With such an estimate, the sequence $ \boldsymbol{\alpha_n} $ is bounded and equally tight  by \cite[lemma 7.3]{MR3763404}. Thus, it's relative compact in the weak* topology and each limit point still satisfies the constraint. As $c$ is continuous we get the result.
	\end{proof}
	
	Now, analogous to the results in \cite{MR2801182, MR3004954} for the Wasserstein setting, we establish relations between Hellinger-Kantorovich barycenters and solutions of the homogeneous multimarginal problem.	
	\begin{thm}
		\label{bary+multi}
		Let $(X,d)$ be a Polish metric space having property $(BC)$. Let $T:\fC^p\to \fC$ be a Borel barycenter map as in lemma \ref{L-Borel barycenter map}.
		\begin{enumerate}        
			\item If the homogeneous multimarginal problem has a solution $ \boldsymbol{\alpha}\in \cM_2(\fC^p) $ then $ \mathfrak{h}^2T_\#\boldsymbol{\alpha} $ is a barycenter of $\mu_1,\dots,\mu_p$ with weights $\lambda_1,\dots,\lambda_p$;
			\item Assume further that for every $\boldsymbol{\eta}=(\eta_1,\dots,\eta_p)\in \fC^p$, the point $T(\boldsymbol{\eta})$ is the unique barycenter of $(\eta_i,\lambda_i)$. Then every Hellinger-Kantorovich barycenter $\mu_0$ of $(\mu_i,\lambda_i)$ must have the form $ \mathfrak{h}^2T_\#\boldsymbol{\alpha} $ for some solution $ \boldsymbol{\alpha}\in \cP_2(\fC^p) $ of the homogeneous multimarginal problem with $\boldsymbol{\alpha}$ being concentrated on $ \mathfrak{C}^{p}[\boldsymbol{\Xi}] $, with $ \boldsymbol{\Xi}=\sqrt{\mu_0(X)}+\sum_{i=1}^pHK(\mu_0,\mu_i) $.
		\end{enumerate}	
	\end{thm}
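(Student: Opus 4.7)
The plan is to establish the identity
\[ \inf_{\mu\in\cM(X)} J(\mu) = \inf\Big\{\int_{\fC^p} c\, d\boldsymbol{\alpha}: \boldsymbol{\alpha}\in\cM_2(\fC^p),\ \mathfrak{h}_i^2\boldsymbol{\alpha}=\mu_i\Big\} \]
together with an explicit link between optimizers on the two sides provided by the Borel barycenter map $T$, and then to read off (1) and (2) from it.

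For the ``$\leq$'' direction of the identity, given any feasible $\boldsymbol{\alpha}$ I would set $\mu:=\mathfrak{h}^2 T_\#\boldsymbol{\alpha}$. A cone triangle inequality combined with the crude bound $c(\boldsymbol{\eta})\leq \sum_i \lambda_i \mathbf{r}_i^2(\boldsymbol{\eta})$ (obtained by taking $\eta=\mathfrak{o}$ in the inf defining $c$) gives $\mathbf{r}(T(\boldsymbol{\eta}))^2\leq 4\sum_i\lambda_i \mathbf{r}_i^2(\boldsymbol{\eta})$, so $T_\#\boldsymbol{\alpha}\in\cM_2(\fC)$ and $\mu$ is a finite Radon measure. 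For each $i$ the push-forward $(T,\pi_i)_\#\boldsymbol{\alpha}$ lies in $\cM_2(\fC^2)$ and has homogeneous marginals $\mu$ and $\mu_i$, hence
\[ HK^2(\mu,\mu_i)\leq \int d_\fC^2(T(\boldsymbol{\eta}),\eta_i)\, d\boldsymbol{\alpha}(\boldsymbol{\eta}). \]
Summing with weights $\lambda_i$ and using the defining property $c(\boldsymbol{\eta})=\sum_i\lambda_i d_\fC^2(T(\boldsymbol{\eta}),\eta_i)$ of the barycenter map $T$ produces $J(\mu)\leq\int c\, d\boldsymbol{\alpha}$. For ``$\geq$'', pick any $\mu'\in\cM(X)$, apply Lemma \ref{L-lifting gluing measure} to $(\mu',\mu_1,\ldots,\mu_p)$, and push the resulting $\boldsymbol{\beta}\in\cP_2(\fC^{p+1})$ onto its last $p$ coordinates to obtain a feasible $\boldsymbol{\alpha}'$; the pointwise inequality $c(\eta_1,\ldots,\eta_p)\leq\sum_i\lambda_i d_\fC^2(\eta_0,\eta_i)$ integrated against $\boldsymbol{\beta}$ yields $\int c\, d\boldsymbol{\alpha}'\leq J(\mu')$. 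This establishes the identity, and (1) is immediate: for an optimal $\boldsymbol{\alpha}$, one has $J(\mathfrak{h}^2 T_\#\boldsymbol{\alpha})\leq \int c\, d\boldsymbol{\alpha}=\inf J$, so $\mathfrak{h}^2 T_\#\boldsymbol{\alpha}$ is a barycenter.

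For part (2), let $\mu_0$ be an HK barycenter and apply Lemma \ref{L-lifting gluing measure} to $(\mu_0,\mu_1,\ldots,\mu_p)$ to produce $\boldsymbol{\beta}\in\cP_2(\fC^{p+1})$ concentrated on $\fC^{p+1}[\boldsymbol{\Xi}]$, with $\boldsymbol{\Xi}=\sqrt{\mu_0(X)}+\sum_{i=1}^p HK(\mu_0,\mu_i)$, that realizes all the HK distances $HK^2(\mu_0,\mu_i)$. Set $\boldsymbol{\alpha}:=(\pi_1,\ldots,\pi_p)_\#\boldsymbol{\beta}$, which is feasible for the multimarginal problem and concentrated on $\fC^p[\boldsymbol{\Xi}]$. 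By the chain of inequalities just established,
\[ \int c\, d\boldsymbol{\alpha}\leq\int \sum_i\lambda_i d_\fC^2(\eta_0,\eta_i)\, d\boldsymbol{\beta}=J(\mu_0)=\inf J=\inf_{\boldsymbol{\alpha}'}\int c\, d\boldsymbol{\alpha}', \]
so $\boldsymbol{\alpha}$ is optimal and the first inequality is actually an equality. Since the two integrands are pointwise comparable, this forces $c(\eta_1,\ldots,\eta_p)=\sum_i\lambda_i d_\fC^2(\eta_0,\eta_i)$ for $\boldsymbol{\beta}$-a.e.\ $(\eta_0,\eta_1,\ldots,\eta_p)$. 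By the uniqueness hypothesis on $T$, this identifies $\eta_0=T(\eta_1,\ldots,\eta_p)$ $\boldsymbol{\beta}$-a.e., so $\boldsymbol{\beta}=(T\circ\pi,\pi)_\#\boldsymbol{\alpha}$ with $\pi=(\pi_1,\ldots,\pi_p)$; computing the $0$-th homogeneous marginal yields $\mu_0=\mathfrak{h}_0^2\boldsymbol{\beta}=\mathfrak{h}^2 T_\#\boldsymbol{\alpha}$, as required.

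The main obstacle is the final identification step, namely passing from an integrated equality of two pointwise-ordered integrands to a pointwise-a.e.\ coincidence $\eta_0=T(\boldsymbol{\eta})$. This is precisely where the uniqueness hypothesis on $T$ is indispensable; the Borel regularity of $T$ afforded by Lemma \ref{L-Borel barycenter map} is what makes the graph-concentration representation $\boldsymbol{\beta}=(T\circ\pi,\pi)_\#\boldsymbol{\alpha}$ measure-theoretically meaningful.
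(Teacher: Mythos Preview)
Your proof is correct and follows essentially the same approach as the paper's: both arguments lift to the cone via Lemma~\ref{L-lifting gluing measure}, compare the pointwise inequality $c(\eta_1,\dots,\eta_p)\le\sum_i\lambda_i d_\fC^2(\eta_0,\eta_i)$ against its weighted counterpart at the barycenter $T(\boldsymbol{\eta})$, and then use optimality to force equality and (under the uniqueness hypothesis) graph concentration $\eta_0=T(\boldsymbol{\eta})$ $\boldsymbol{\beta}$-a.e. Your presentation is slightly more explicit---you isolate the identity $\inf J=\inf\int c\,d\boldsymbol{\alpha}$ and justify $T_\#\boldsymbol{\alpha}\in\cM_2(\fC)$ via the bound $\mathbf{r}(T(\boldsymbol{\eta}))^2\le 4\sum_i\lambda_i\mathbf{r}_i^2$---while the paper compresses both directions into a single chain of inequalities, but the content is the same.
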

	\begin{proof}
		(1)	Let $ \mu^n\in \cM(X) $ be a sequence minimizing $ \sum_{i=1}^p\lambda_iHK^2(\mu,\mu_i) $ and $ \boldsymbol{\alpha}\in \cM_2(\fC^p) $ be a minimizer of the homogeneous multimarginal problem. Then from lemma \ref{L-lifting gluing measure}, for every $n\in \N$
		choosing $\boldsymbol{\beta}^n\in \cP_2(\fC^{p+1})$ with 
		$\mathfrak{h}_{1}^2\boldsymbol{\beta}^n=\mu^n, \mathfrak{h}_{i+1}^2\boldsymbol{\beta}^n=\mu_i$ and $HK^2(\mu^n,\mu_i)=\int d^2_\fC(\eta_0,\eta_i)d\boldsymbol{\beta}^n, i=1,\dots, p,$
		we get that
		\begin{align*}
		\sum_{i=1}^p\lambda_iHK^2(\mu^n,\mu_i)=&\int \sum_{i=1}^p\lambda_id_\mathfrak{C}^2(\eta_0,\eta_i)d\boldsymbol{\beta}^n\\
		\ge&\int\sum_{i=1}^p \lambda_id_\mathfrak{C}^2(T(\eta_1,\ldots,\eta_p),\eta_i)d\boldsymbol{\beta}^n\\
		\ge&\int\sum_{i=1}^p \lambda_id_\mathfrak{C}^2(T(\eta_1,\ldots,\eta_p),\eta_i)d\boldsymbol{\alpha}\\
		\ge&\sum_{i=1}^p\lambda_iHK^2(\mathfrak{h}^2T_\#\boldsymbol{\alpha},\mu_i).
		\end{align*}
		Thus, $ \mathfrak{h}^2T_\#\boldsymbol{\alpha} $ is a barycenter of $\mu_1,\dots, \mu_p$ with respect to $\lambda_1,\dots,\lambda_p$.
		
		(2)	Let $ \mu_0 $ be a barycenter of $\mu_1,\dots, \mu_p$ with respect to $\lambda_1,\dots,\lambda_p$ then from lemma \ref{L-lifting gluing measure} there is $ \boldsymbol{\beta}\in\cP_2(\fC^{p+1}) $ concentrated on $ \mathfrak{C}^{p+1}[\boldsymbol{\Xi}] $, with $ \boldsymbol{\Xi}=\sqrt{\mu_0(X)}+\sum_{i=1}^pHK(\mu_0,\mu_i) $ such that
		\begin{align*}
		\sum_{i=1}^{p}\lambda_iHK^2(\mu_0,\mu_i)=&\int\sum_{i=1}^{p}\lambda_i d^2_{\mathfrak{C}}(\eta_0,\eta_i)d\boldsymbol{\beta}(\eta_0,\ldots,\eta_p)\\
		\ge&\int\sum_{i=1}^{p}\lambda_i d^2_{\mathfrak{C}}(T(\eta_1,\ldots,\eta_p),\eta_i)d\boldsymbol{\beta}(\eta_0,\ldots,\eta_p)\\
		=&\int\sum_{i=1}^{p}\lambda_i d^2_{\mathfrak{C}}(T(\eta_1,\ldots,\eta_p),\eta_i)d\Pi_{1,\ldots,p\#}\boldsymbol{\beta}(\eta_1,\ldots,\eta_p)\\
		\ge&\sum_{i=1}^{p}\lambda_iHK^2(\mathfrak{h}^2T_\#\Pi_{1,\ldots,p\#}\boldsymbol{\beta},\mu_i).
		\end{align*}
		Since all the equations hold, $ \Pi_{1,\ldots,p\#}\boldsymbol{\beta} $ must be a minimizer for homogeneous multimarginal problem and $ \boldsymbol{\beta} $ is concentrated on the graph $ (T(\eta_1,\ldots,\eta_p),\eta_1,\ldots,\eta_p). $ Thus, $ \mu_0=\mathfrak{h}^2T_\#\boldsymbol{\alpha}, $ where $\boldsymbol{\alpha}=\Pi_{1,\ldots,p\#}\boldsymbol{\beta}\in \cP_2(\fC^p)$.
	\end{proof}
	As a consequence, we get a consistency result for Hellinger-Kantorovich barycenters. It has been proved for Wasserstein spaces in \cite[Theorem 3.1]	{MR3338645}.
	\begin{thm}
		\label{T-consistency}
		Let $(X,d)$ be a Polish metric space having property $(BC)$. Let $T:\fC^p\to \fC$ be a Borel barycenter map as in lemma \ref{L-Borel barycenter map}. Assume that $T$ is continuous and for every $\boldsymbol{\eta}=(\eta_1,\dots,\eta_p)\in \fC^p$, the point $T(\boldsymbol{\eta})$ is the unique barycenter of $(\eta_i,\lambda_i)$.
		Let $ \{\mu_1^n\},\dots,\{\mu_p^n\} $ be sequences in $\cM(X)$ such that $ \mu_i^n $ converges to $ \mu_i $ weakly*, and for every $n\in \N$ let $ \mu_B^n $ be any Hellinger-Kantorovich barycenter of $ (\mu_i^n,\lambda_i)$. Then the sequence $\{ \mu_B^n\}_n $ is relative compact and any of its limit is a Hellinger-Kantorovich barycenter for $ (\mu_i,\lambda_i) $.
	\end{thm}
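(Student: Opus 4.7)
The plan is to route everything through the homogeneous multimarginal formulation provided by theorem \ref{bary+multi}, and then transport narrow convergence of multimarginal plans back to the barycenters by using continuity of $T$. First, I would apply theorem \ref{bary+multi}(2) to write each $\mu_B^n=\mathfrak{h}^2T_\#\boldsymbol{\alpha}^n$ with $\boldsymbol{\alpha}^n\in\cP_2(\fC^p)$ a multimarginal optimizer for $(\mu_i^n,\lambda_i)$ concentrated on $\fC^p[\boldsymbol{\Xi}^n]$. Testing $J^n$ at the zero measure gives $\sum_i\lambda_iHK^2(\mu_B^n,\mu_i^n)\le \sum_i\lambda_i\mu_i^n(X)$, and since $\varphi\equiv 1\in C_b(X)$, the narrow-convergence hypothesis forces $\mu_i^n(X)\to\mu_i(X)$, so the right-hand side is uniformly bounded. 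The triangle inequality for $HK$ then bounds both $\mu_B^n(X)=HK^2(\mu_B^n,0)$ and each $HK(\mu_B^n,\mu_i^n)$ uniformly, producing a single $\boldsymbol{\Xi}$ with $\boldsymbol{\Xi}^n\le\boldsymbol{\Xi}$ for all $n$.

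Next, I would extract a narrowly convergent subsequence of $\{\boldsymbol{\alpha}^n\}$. Narrow convergence of $\{\mu_i^n\}=\{\mathfrak{h}_i^2\boldsymbol{\alpha}^n\}$ yields its tightness, and combined with the uniform radius bound, \cite[Lemma 7.3]{MR3763404} upgrades this to tightness of $\{\boldsymbol{\alpha}^n\}$ in $\cP(\fC^p)$. Prokhorov's theorem then produces $\boldsymbol{\alpha}^{n_k}\to\boldsymbol{\alpha}$ narrowly with $\boldsymbol{\alpha}$ concentrated on $\fC^p[\boldsymbol{\Xi}]$. The identity $\mathfrak{h}_i^2\boldsymbol{\alpha}=\mu_i$ is then read off by pairing with $g\in C_b(X)$, using that the integrand $[x,r]\mapsto r^2g(x)$ extends continuously by $0$ at the apex $\mathfrak{o}$ and is bounded on $\fC^p[\boldsymbol{\Xi}]$.

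To verify optimality of $\boldsymbol{\alpha}$ for the limit multimarginal problem, I would first observe that the chain of inequalities in the proof of theorem \ref{bary+multi} forces $\int c\,d\boldsymbol{\alpha}^n=\inf J^n$; the cost $c$ is continuous (lemma \ref{L-lower semi-continuity of cost functions induced from barycenters}) and bounded on $\fC^p[\boldsymbol{\Xi}]$, so $\int c\,d\boldsymbol{\alpha}^{n_k}\to\int c\,d\boldsymbol{\alpha}$. The bound $\limsup_n\inf J^n\le\inf J$ follows by fixing any barycenter $\mu_B^\ast$ of $(\mu_i,\lambda_i)$ and using $HK(\mu_i^n,\mu_i)\to 0$---a consequence of \cite[Theorem 7.15]{MR3763404} for narrowly convergent sequences with matching total mass---to conclude $J^n(\mu_B^\ast)\to J(\mu_B^\ast)=\inf J$. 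Together these give $\int c\,d\boldsymbol{\alpha}\le\inf J$, and combined with the first inequality in the proof of theorem \ref{bary+multi} this forces $\boldsymbol{\alpha}$ to be a multimarginal minimizer for $(\mu_i,\lambda_i)$.

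Finally, continuity of $T$ gives $T_\#\boldsymbol{\alpha}^{n_k}\to T_\#\boldsymbol{\alpha}$ narrowly on $\fC$, and the same bounded-continuous integrand argument transfers this to $\mu_B^{n_k}=\mathfrak{h}^2T_\#\boldsymbol{\alpha}^{n_k}\to\mathfrak{h}^2T_\#\boldsymbol{\alpha}=:\mu_B$ weakly$^\ast$ in $\cM(X)$; theorem \ref{bary+multi}(1) then identifies $\mu_B$ as a Hellinger-Kantorovich barycenter of $(\mu_i,\lambda_i)$. I expect the main technical obstacle to be the tightness transfer in the non-locally-compact cone $\fC^p$ together with the careful handling of push-forwards near the apex, where $\mathbf{x}$ fails to be continuous but the $r^2$ weight rescues the continuity of every quantity of interest; the uniform bound on $\boldsymbol{\Xi}^n$ is what makes each narrow-to-integral transfer happen on a single bounded subset of the cone.
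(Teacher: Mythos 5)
Your proposal is correct, and its first half --- the relative compactness of $\{\mu_B^n\}$ via the multimarginal lifts $\boldsymbol{\alpha}^n$ from theorem \ref{bary+multi}(2), the uniform bound on $\boldsymbol{\Xi}^n$, the tightness transfer through \cite[Lemma 7.3]{MR3763404}, and the push-forward by the continuous map $T$ on a single bounded slab $\fC^p[\boldsymbol{\Xi}]$ --- is essentially the paper's argument (the paper bounds $\sum_i\lambda_iHK^2(\mu_B^n,\mu_i^n)$ via \cite[Theorem 7.15]{MR3763404} rather than by testing $J^n$ at the null measure, but that is cosmetic). Where you genuinely diverge is the identification of the limit as a barycenter. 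The paper does not return to the multimarginal problem at all: having extracted $\mu_B^{n}\to\mu_B$, it fixes a barycenter $\mu_0$ of $(\mu_i,\lambda_i)$, invokes \cite[Theorem 7.15]{MR3763404} (that $HK$ metrizes weak* convergence) to pass to the limit in the two-sided comparison $\sum_i\lambda_iHK^2(\mu_0,\mu_i^n)\ge\sum_i\lambda_iHK^2(\mu_B^n,\mu_i^n)$, and is done in three lines. You instead prove a stability statement for the multimarginal problem itself --- that the narrow limit $\boldsymbol{\alpha}$ of the optimal plans has the right homogeneous marginals and satisfies $\int c\,d\boldsymbol{\alpha}\le\liminf_n\inf J^n\le\inf J$, hence is optimal for the limit data --- and then apply theorem \ref{bary+multi}(1). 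This costs more bookkeeping (the truncation of $r_i^2g(x_i)$ and of $c$ to bounded continuous functions agreeing with them on $\fC^p[\boldsymbol{\Xi}]$ must be made explicit, since narrow convergence only tests against $C_b(\fC^p)$), but it buys slightly more: the limit barycenter is exhibited as $\mathfrak{h}^2T_\#\boldsymbol{\alpha}$ for a multimarginal optimizer $\boldsymbol{\alpha}$ of the limit data, i.e.\ you also get consistency of the multimarginal solutions. One small correction: in citing \cite[Theorem 7.15]{MR3763404} you add the caveat ``with matching total mass''; no such hypothesis is needed, as narrow convergence in $\cM(X)$ already forces $\mu_i^n(X)\to\mu_i(X)$ (test against $\varphi\equiv 1$), which is exactly how the paper uses that theorem as well.
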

	\begin{proof}
		According to theorem \ref{bary+multi}, there exists $ \boldsymbol{\alpha^n}\in \cP_2(\fC^p) $ which is a solution for homogeneous multimarginal problem and satisfying $ \mathfrak{h}^2T_\#\boldsymbol{\alpha^n}=\mu_B^n $ for each $ n $. Also, $ \boldsymbol{\alpha}^n $ is concentrated on $ \mathfrak{C}^p[\sqrt{\mu_B^n(X)}+\sum_{i=1}^pHK(\mu_B^n,\mu_i^n)] $ and $ T_\#\boldsymbol{\alpha}^n $ is concentrated on $ \mathfrak{C}[\sqrt{\mu_B^n(X)}+\sum_{i=1}^pHK(\mu_B^n,\mu_i^n)] $. Let $\nu_0$ be the null measure on $X$. Then we have $HK^2(\nu,\nu_0)=\nu(X)$ for every $\nu\in \cM(X)$. Therefore
		\begin{align*}
		\dfrac{1}{2}\mu_B^n(X)&=\dfrac{1}{2}HK^2(\mu_B^n,\nu_0)\\
		&\le\dfrac{1}{2}\sum_{i=1}^p\lambda_i[ HK(\mu_i^n,\nu_0)+HK(\mu_B^n,\mu_i^n)]^2\\
		&\le\sum_{i=1}^p\lambda_i\mu_i^n(X)+\sum_{i=1}^p\lambda_iHK^2(\mu_B^n,\mu_i^n).
		\end{align*}
		We also have 
		$$ \sum_{i=1}^pHK(\mu_B^n,\mu_i^n)\le\sum_{i=1}^p\dfrac{1}{4\lambda_i}+\sum_{i=1}^p\lambda_iHK^2(\mu_B^n,\mu_i^n).$$
		Because $ \mu_i^n $ converges to $ \mu_i $ as $n\to \infty$, we get that $ \{\mu_i^n(X)\}_n $ is bounded. Using \cite[Theorem 7.15]{MR3763404}) we also get that $\{ \sum_{i=1}^p\lambda_iHK^2(\mu_B^n,\mu_i^n) \}_n$ is bounded. Hence $\{ \sqrt{\mu_B^n(X)}+\sum_{i=1}^pHK(\mu_B^n,\mu_i^n) \}_n$ is bounded above by some $ \boldsymbol{\Xi}>0 $. Then we can assume that $ \boldsymbol{\alpha}^n $ is concentrated on $ \mathfrak{C}^p[\boldsymbol{\Xi}] $, $ T_\#\boldsymbol{\alpha}^n $ is concentrated on $ \mathfrak{C}[\boldsymbol{\Xi}] $ for every $n\in \N$. As $ \{\mu_i^n \}_n$ is a convergent sequence we obtain that $ \{\mu_i^n\}_n $ is an equally tight set in $\cM(X)$ then $ \{\boldsymbol{\alpha}^n\} $ is also an equally tight set in $\cM(\fC^p)$ by \cite[Lemma 7.3]{MR3763404}). As $\boldsymbol{\alpha}^n $ is also a probability measure for every $n$, we get that $\{\boldsymbol\alpha^n\}_n $ is relative compact. Because $T$ is continuous and $ T_\#\boldsymbol{\alpha}^n $ is concentrated on $ \mathfrak{C}[\boldsymbol{\Xi}] $ for every $n\in \N$, we get that the sequence $\{ \mu_B^n\} $ is also relative compact.
		
		Take a convergent subsequence of $ \mu_B^n $, to simplify, we still denote it by $ \mu_B^n $. We denote its limit by $ \mu_B $. Let $ \mu_0 $ be a Hellinger-Kantorovich barycenter for $( \mu_i,\lambda_i)$, then applying again \cite[Theorem 7.15]{MR3763404} we get
		\[ \sum_{i=1}^p\lambda_iHK^2(\mu_0,\mu_i^n)\rightarrow\sum_{i=1}^p\lambda_iHK^2(\mu_0,\mu_i), \]
		\[ \sum_{i=1}^p\lambda_iHK^2(\mu_B^n,\mu_i^n)\rightarrow\sum_{i=1}^p\lambda_iHK^2(\mu_B,\mu_i). \]
		On the other hand, \[ \sum_{i=1}^p\lambda_iHK^2(\mu_0,\mu_i^n)\ge\sum_{i=1}^p\lambda_iHK^2(\mu_B^n,\mu_i^n). \]
		Therefore, $ \sum_{i=1}^p\lambda_iHK^2(\mu_0,\mu_i)\ge\sum_{i=1}^p\lambda_iHK^2(\mu_B,\mu_i) $ which means $ \mu_B $ is a Hellinger-Kantorovich barycenter for $ \mu_1,\ldots,\mu_p $ with respect to $\lambda_1,\dots, \lambda_p$.
	\end{proof}
	When the space $\fC$ is NPC it is known that for every $\boldsymbol{\eta}=(\eta_1,\dots,\eta_p)\in \fC^p$ and $\lambda_1,\dots,\lambda_p\geq 0$ with $\sum_{i=1}^p\lambda_i=1$, the barycenter of $(\eta_i,\lambda_i)$ is unique \cite[Proposition 4.3]{Sturm} and furthermore the map $T:\fC^p\to \fC$ as in lemma \ref{L-Borel barycenter map} is continuous by \cite[Theorem 6.3]{Sturm}. As we know that $X$ is $\CAT(1)$ if and only if $\fC$ is $\CAT(0)$, theorems \ref{bary+multi} and \ref{T-consistency} work for all Polish $\CAT(1)$ spaces.   
\begin{remark}
Let $n\geq 2$ and $X=\mathbb{S}^{n-1}\subset \R^n$ be the $(n-1)$-sphere endowed with the standard spherical angular metric $d$, i.e. $d(x,y):=\arccos\big(1-\frac{1}{2}\|x-y\|^2\big)$ for every $x,y\in X$. Then its Euclidean cone $(\fC,d_\fC)$ is the usual Euclidean space $(\R^n,\|\cdot\|)$. In this case, for every $p\geq 1$ and every weights $(\lambda_1,\cdots, \lambda_p)$, from \cite[page 913]{MR2801182} we know that the map $T:(\R^n)^p\to \R^n$ defined by $T(x_1,\cdots,x_p):=\sum_{i=1}^p \lambda_ix_i$ for every $(x_1,\cdots,x_p)\in (\R^n)^p$ is characterized by the property $$\inf_{y\in \R^n}\sum_{i=1}^p\lambda_i\|x_i-y\|^2=\sum_{i=1}^p\lambda_i\|x_i-T(x_1,\cdots,x_p)\|^2.$$  
Therefore, theorem \ref{bary+multi} provides us an explicit homogeneous marginal formulation of the Hellinger-Kantorovich barycenter problem in this case.
\end{remark}		
	\begin{bibdiv}
		\begin{biblist}
			\bib{MR2801182}{article}{
				author={Agueh, Martial},
				author={Carlier, Guillaume},
				title={Barycenters in the Wasserstein space},
				journal={SIAM J. Math. Anal.},
				volume={43},
				date={2011},
				number={2},
				pages={904--924}
			}
			
			\bib{Ambrosio}{book}{
				author={Ambrosio, Luigi},
				author={Gigli, Nicola},
				author={Savar\'{e}, Giuseppe},
				title={Gradient flows in metric spaces and in the space of probability
					measures},
				series={Lectures in Mathematics ETH Z\"{u}rich},
				edition={2},
				publisher={Birkh\"{a}user Verlag, Basel},
				date={2008},
			}
			
			\bib{MR3241330}{book}{
				author={Ba\v{c}\'{a}k, Miroslav},
				title={Convex analysis and optimization in Hadamard spaces},
				series={De Gruyter Series in Nonlinear Analysis and Applications},
				volume={22},
				publisher={De Gruyter, Berlin},
				date={2014},
				
			}
			\bib{MR3338645}{article}{
				author={Boissard, Emmanuel},
				author={Le Gouic, Thibaut},
				author={Loubes, Jean-Michel},
				title={Distribution's template estimate with Wasserstein metrics},
				journal={Bernoulli},
				volume={21},
				date={2015},
				number={2},
				pages={740--759}
			}
			\bib{BH}{book}{
				author={Bridson, Martin R.},
				author={Haefliger, Andr\'{e}},
				title={Metric spaces of non-positive curvature},
				series={Grundlehren der Mathematischen Wissenschaften [Fundamental
					Principles of Mathematical Sciences]},
				volume={319},
				publisher={Springer-Verlag, Berlin},
				date={1999},
				
			}
			\bib{BP}{article}{
				author={Brown, L. D.},
				author={Purves, R.},
				title={Measurable selections of extrema},
				journal={Ann. Statist.},
				volume={1},
				date={1973},
				pages={902--912}
				
			}
			
			\bib{MR3423268}{article}{
				author={Carlier, Guillaume},
				author={Oberman, Adam},
				author={Oudet, Edouard},
				title={Numerical methods for matching for teams and Wasserstein
					barycenters},
				journal={ESAIM Math. Model. Numer. Anal.},
				volume={49},
				date={2015},
				number={6},
				pages={1621--1642},
				
			}
			
			\bib{CPSV}{article}{
				author={Chizat, L\'{e}na\"{\i}c},
				author={Peyr\'{e}, Gabriel},
				author={Schmitzer, Bernhard},
				author={Vialard, Fran\c{c}ois-Xavier},
				title={Unbalanced optimal transport: dynamic and Kantorovich
					formulations},
				journal={J. Funct. Anal.},
				volume={274},
				date={2018},
				number={11},
				pages={3090--3123},
				
			}
			\bib{CT}{article}{
				author={Chung, Nhan-Phu},
				author={Trinh, Thanh-Son},
				title={Barycenters in generalized Wasserstein spaces},
				status={arXiv:1909.05517},
			}

			\bib{MR1727362}{book}{
				author={Ekeland, Ivar},
				author={T\'{e}mam, Roger},
				title={Convex analysis and variational problems},
				series={Classics in Applied Mathematics},
				volume={28},
				edition={Corrected reprint of the 1976 English edition},
				note={Translated from the French},
				publisher={Society for Industrial and Applied Mathematics (SIAM),
					Philadelphia, PA},
				date={1999},
			}
			\bib{FMS}{article}{
				author={Friesecke, Gero},
				author={Matthes, Daniel},
				author={Schmitzer, Bernhard},
				title={Barycenters for the Hellinger-Kantorovich distance over $\R^d$},
				status={arXiv:1910.14572},
			}
			
			\bib{GS}{article}{
				author={Gangbo, Wilfrid},
				author={\'{S}wi\c{e}ch, Andrzej},
				title={Optimal maps for the multidimensional Monge-Kantorovich problem},
				journal={Comm. Pure Appl. Math.},
				volume={51},
				date={1998},
				number={1},
				pages={23--45},
				
			}
			
			\bib{KimPass2014}{article}{
				author={Kim, Young-Heon},
				author={Pass, Brendan},
				title={A general condition for Monge solutions in the multi-marginal
					optimal transport problem},
				journal={SIAM J. Math. Anal.},
				volume={46},
				date={2014},
				number={2},
				pages={1538--1550},
				
			}
			
			\bib{MR3372314}{article}{
				author={Kim, Young-Heon},
				author={Pass, Brendan},
				title={Multi-marginal optimal transport on Riemannian manifolds},
				journal={Amer. J. Math.},
				volume={137},
				date={2015},
				number={4},
				pages={1045--1060}
			}
			\bib{MR3590527}{article}{
				author={Kim, Young-Heon},
				author={Pass, Brendan},
				title={Wasserstein barycenters over Riemannian manifolds},
				journal={Adv. Math.},
				volume={307},
				date={2017},
				pages={640--683}
			}
			\bib{KMV}{article}{
				author={Kondratyev, Stanislav},
				author={Monsaingeon, L\'{e}onard},
				author={Vorotnikov, Dmitry},
				title={A new optimal transport distance on the space of finite Radon
					measures},
				journal={Adv. Differential Equations},
				volume={21},
				date={2016},
				number={11-12},
				pages={1117--1164},
			}
			\bib{KV}{article}{
				author={Kondratyev, Stanislav},
				author={Vorotnikov, Dmitry},
				title={Spherical Hellinger-Kantorovich gradient flows},
				journal={SIAM J. Math. Anal.},
				volume={51},
				date={2019},
				number={3},
				pages={2053--2084},
				
			}

			\bib{MR3663634}{article}{
				author={Le Gouic, Thibaut},
				author={Loubes, Jean-Michel},
				title={Existence and consistency of Wasserstein barycenters},
				journal={Probab. Theory Related Fields},
				volume={168},
				date={2017},
				number={3-4},
				pages={901--917}
			}
			\bib{MR3542003}{article}{
				author={Liero, Matthias},
				author={Mielke, Alexander},
				author={Savar\'{e}, Giuseppe},
				title={Optimal transport in competition with reaction: the
					Hellinger-Kantorovich distance and geodesic curves},
				journal={SIAM J. Math. Anal.},
				volume={48},
				date={2016},
				number={4},
				pages={2869--2911}
			}
			\bib{MR3763404}{article}{
				author={Liero, Matthias},
				author={Mielke, Alexander},
				author={Savar\'{e}, Giuseppe},
				title={Optimal entropy-transport problems and a new Hellinger-Kantorovich
					distance between positive measures},
				journal={Invent. Math.},
				volume={211},
				date={2018},
				number={3},
				pages={969--1117}
			}
			\bib{Pass2011}{article}{
				author={Pass, Brendan},
				title={Uniqueness and Monge solutions in the multimarginal optimal
					transportation problem},
				journal={SIAM J. Math. Anal.},
				volume={43},
				date={2011},
				number={6},
				pages={2758--2775},
				
			}

			\bib{MR3004954}{article}{
				author={Pass, Brendan},
				title={Optimal transportation with infinitely many marginals},
				journal={J. Funct. Anal.},
				volume={264},
				date={2013},
				number={4},
				pages={947--963},
				
			}
			
			\bib{PR14}{article}{
				author={Piccoli, Benedetto},
				author={Rossi, Francesco},
				title={Generalized Wasserstein distance and its application to transport
					equations with source},
				journal={Arch. Ration. Mech. Anal.},
				volume={211},
				date={2014},
				number={1},
				pages={335--358},
				
			}		
			\bib{MR3469435}{article}{
				author={Rabin, Julien},
				author={Peyr\'{e}, Gabriel },
				author={Delon, Julie },
				author={Bernot, Marc},
				title={Wasserstein Barycenter and Its Application to Texture Mixing},
				conference={
					Scale Space and Variational Methods in Computer Vision},
				date={2011},
				book={
					series={Lecture Notes in Computer Science},
					volume={6667},
					publisher={Springer, Berlin, Heidelberg},
				},
				date={2012},
				pages={435--446},
			}
			\bib{Rock}{article}{
   author={Rockafellar, R. T.},
   title={Integrals which are convex functionals. II},
   journal={Pacific J. Math.},
   volume={39},
   date={1971},
   pages={439--469},
  }
			\bib{MR3862415}{article}{
				author={Srivastava, Sanvesh},
				author={Li, Cheng},
				author={Dunson, David B.},
				title={Scalable Bayes via barycenter in Wasserstein space},
				journal={J. Mach. Learn. Res.},
				volume={19},
				date={2018},
				pages={Paper No. 8, 35},
				
			}			
			\bib{Sturm}{article}{
				author={Sturm, Karl-Theodor},
				title={Probability measures on metric spaces of nonpositive curvature},
				conference={
					title={Heat kernels and analysis on manifolds, graphs, and metric
						spaces},
					address={Paris},
					date={2002},
				},
				book={
					series={Contemp. Math.},
					volume={338},
					publisher={Amer. Math. Soc., Providence, RI},
				},
				date={2003},
				pages={357--390},
				
			}
			
			\bib{MR2459454}{book}{
				author={Villani, C\'{e}dric},
				title={Optimal transport},
				series={Grundlehren der Mathematischen Wissenschaften [Fundamental
					Principles of Mathematical Sciences]},
				volume={338},
				note={Old and new},
				publisher={Springer-Verlag, Berlin},
				date={2009}
				
			}
		\end{biblist}
	\end{bibdiv}
\end{document}